\documentclass[a4paper,11pt]{article}

\usepackage{geometry}
\usepackage{latexsym,amsmath,amssymb,amsfonts,epsfig,graphicx,cite,psfrag}
\usepackage{eepic,color,colordvi,amscd}
\usepackage{mathrsfs}
\usepackage{amsthm}
\usepackage{enumerate}
\usepackage{indentfirst}
\usepackage{tikz}
\usepackage{subfigure}
\usepackage[english]{babel}
\usepackage[center]{caption2}
\usepackage{multirow}
\usepackage[usenames,dvipsnames]{pstricks}
\usepackage{pst-grad} 
\usepackage{pst-plot} 
\usepackage[space]{grffile} 
\usepackage{etoolbox} 
\usepackage{float}
\usepackage{subfigure}
\usepackage{appendix}
\usepackage{hyperref}
\usepackage{booktabs}
\usepackage{array}
\usepackage{chngcntr}
\makeatletter 
\patchcmd\Gread@eps{\@inputcheck#1 }{\@inputcheck"#1"\relax}{}{}

\hypersetup{hidelinks,colorlinks=true,allcolors=black,pdfstartview=Fit,breaklinks=true}

\renewcommand{\thesubfigure}{(\roman{subfigure})}
\makeatletter \renewcommand{\@thesubfigure}{\thesubfigure \space}

\def\qedB{{\hfill\enspace\vrule height8pt depth0pt width8pt}}

\newtheorem{theo}{Theorem}[section]

\newtheorem{lem}[theo]{Lemma}
\newtheorem{cla}[theo]{Claim}
\newtheorem{cons}[theo]{Construction}

\newtheorem{prop}[theo]{Proposition}
\newtheorem{conj}[theo]{Conjecture}
\newtheorem{defi}[theo]{Definition}

\newcommand{\N}{{\mathbb{N}}}

\begin{document}

\title{Towards the Erd\H{o}s matching conjecture for 4-uniform hypergraphs: stability and applications}

\date{}

\author{
Peter Frankl\thanks{R\'{e}nyi Institute, Budapest, Hungary. Email: frankl.peter@renyi.hu.}~~~~~~~
Hongliang Lu\thanks{School of Mathematics and Statistics,
Xi'an Jiaotong University, Xi'an, Shaanxi, 710049, China. Research supported by National Key Research and Development Program of China 2023YFA1010203, National Natural Science Foundation of China under grant 12271425. Email: luhongliang215@sina.com.}~~~~~~~
Jie Ma\thanks{School of Mathematical Sciences, University of Science and Technology of China, Hefei, Anhui 230026, and Yau Mathematical Sciences Center, Tsinghua University, Beijing 100084, China. Research supported by National Key Research and Development Program of China  2023YFA1010201 and National Natural Science Foundation of China grant 12125106. Email: jiema@ustc.edu.cn.}~~~~~~~
Yuze Wu\thanks{School of Mathematical Sciences, University of Science and Technology of China, Hefei, Anhui, 230026, China. Research supported by Innovation Program for Quantum Science and Technology-National Science and Technology Major Project 2021ZD0302902. Email: lttch@mail.ustc.edu.cn.}}

\maketitle

\begin{abstract}
A famous conjecture of Erd\H{o}s asserts that for $k\ge 3$, the maximum number of edges in an $n$-vertex $k$-uniform hypergraph without $s+1$ pairwise disjoint edges is 
$\max\{\binom{n}{k}-\binom{n-s}{k},\binom{sk+k-1}{k}\}$. 
This problem has been central in extremal combinatorics, with substantial progress in the literature, including a complete solution for $k=3$ due to the first author. 
In this paper, we make progress towards the $4$-uniform case, proving the conjecture for $n\ge 5s$ and sufficiently large $n$, thereby taking a first step analogous to the $3$-uniform case. 
The main technical contribution is a stability result of independent interest. 
We further apply this stability to resolve two new instances of conjectures on the minimum $d$-degree threshold for matchings in $5$- and $6$-uniform hypergraphs, in a strengthened form.
\end{abstract}


\section{Introduction}

\noindent A \emph{$k$-uniform hypergraph} (or simply, a \emph{$k$-graph}) $H$ consists of a vertex set $V(H)$ and an edge set $E(H) \subseteq \binom{V(H)}{k}$, i.e., a collection of $k$-subsets of $V(H)$.
A \emph{matching} of size $s$ in $H$ is a collection of $s$ pairwise disjoint edges, and it is \emph{perfect} if its size equals $|V(H)|/k$.
The \emph{matching number} of a $k$-graph $H$, denoted by $\nu(H)$, is the maximum size of a matching in $H$.
Throughout this paper, let $n,k,s$ be positive integers with $n\geq k(s+1)$ and define $[n]:=\{1,2,\dots,n\}$. 
For each $i\in [k]$, define the $k$-graph $H_{i}=([n],E_{i})$, where the edge set $E_{i}$ is given by:
\begin{displaymath}
	E_{i}=\left\{e\subseteq\binom{[n]}{k}:\left|e\cap[i(s+1)-1]\right|\geq i\right\}.
\end{displaymath}
    It is straightforward to verify that $\nu(H_{i})=s$ for all $i\in [k]$.

A central question in extremal combinatorics is the following: what is the maximum number $e(H)$ of edges in an $n$-vertex $k$-graph $H$ under the constraint $\nu(H)\le s$? 
Erd\H{o}s \cite{E65} famously conjectured that this maximum is attained by one of two natural extremal constructions, namely $H_1$ or $H_k$. More precisely, he proposed the following.

\begin{conj}[Erd\H{o}s Matching Conjecture (\emph{EMC} for short), \cite{E65}]
	Let $H$ be an $n$-vertex $k$-graph with matching number at most $s$. Then
\begin{equation}\label{EMC}
	e(H)\le \max\left\{\binom{n}{k}-\binom{n-s}{k},\binom{sk+k-1}{k}\right\}.
\end{equation}
\end{conj}

In the original paper \cite{E65}, Erd\H{o}s proved \eqref{EMC} for all $n \ge n_{0}(k,s)$, for some sufficiently large $n_{0}(k,s)$. This result was enhanced by Bollob\'{a}s, Daykin, and Erd\H{o}s \cite{BDE76}, who demonstrated the EMC for $n \ge 2k^{3}s$. 
More than three decades later, Hao, Loh, and Sudakov \cite{HLS12} further advanced the result to $n \ge 3k^{2}s$.  
Subsequently, Frankl proved the EMC for $n \ge (2s+1)k - s$ in \cite{F13} and for $n \le (s+1)(k+\varepsilon)$ in \cite{F17A}, where $\varepsilon$ depends solely on $k$. 
More recently, Frankl and Kupavskii \cite{FK22} refined Frankl's arguments in \cite{F13} and established the EMC for sufficiently large $s$ and all $n \ge 5sk/3 - 2s/3$, 
which remains the best known general result to date.

Some special cases of the EMC are of independent interest. 
The case $s=1$ corresponds to the classical Erd\H{o}s--Ko--Rado theorem \cite{EKR61}, which is a cornerstone of extremal combinatorics and continues to motivate a substantial body of research. 
The EMC is trivial for $k=1$, and the graph case ($k=2$) was settled by Erd\H{o}s and Gallai \cite{EG59}. 
The first substantial progress for $3$-uniform hypergraphs was made by Frankl, R\"{o}dl, and Ruci\'{n}ski \cite{FRR12}, who proved the conjecture for $n \ge 4s$. 
Subsequently, \L uczak and Mieczkowska \cite{LM14} established the $3$-uniform case for all $n$ provided that $s \ge s_{0}$. 
Finally, Frankl \cite{F17B} gave a complete resolution of the conjecture for $k=3$.

In this paper, we investigate the EMC for $4$-uniform hypergraphs and some related extremal problems. 
Our main result, stated below, establishes the EMC for $k=4$ and all sufficiently large $n \ge 5s$, in a manner analogous to the result of Frankl, R\"{o}dl, and Ruci\'{n}ski \cite{FRR12}.

\begin{theo}\label{EMCK4}
Let $n, s$ be integers such that $n \ge 5s$ and $n \ge n_0$ for some absolute constant $n_0$. 
Let $H$ be an $n$-vertex $4$-graph with matching number at most $s$. Then $e(H)\leq\binom{n}{4}-\binom{n-s}{4}$. 
\end{theo}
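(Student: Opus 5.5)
We will prove Theorem~\ref{EMCK4} by a stability argument. This splits the problem into a non-extremal case, where $e(H)$ falls strictly short of the bound, and a near-extremal case, where $H$ resembles $H_1$ and an exact argument closes the gap.

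\emph{Preliminary reductions.} By the Frankl--Kupavskii result \cite{FK22}, the EMC holds once $s$ is large and $n\ge 5sk/3-2s/3=6s$ (for $k=4$). By Frankl's result \cite{F13}, it also holds for $n\ge (2s+1)4-s=7s+4$. We may therefore assume $5s\le n<6s+O(1)$, so that $s=\Theta(n)$ is large because $n\ge n_0$. Since shifting preserves both $e(H)$ and the bound $\nu(H)\le s$, we may also assume that $H$ is shifted (stable): if $e\in H$, $j\in e$, $i<j$ and $i\notin e$, then $(e\setminus\{j\})\cup\{i\}\in H$. For a shifted family, $\nu(H)\le s$ is equivalent to the statement that no $s+1$ edges are pairwise disjoint. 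This gives Frankl's standard tool: the edges contained in $[4(s+1)-1]$, and the various link families, are constrained.

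\emph{Stability step.} This step is the main technical contribution. We aim to show the following: there is $\varepsilon>0$ such that if $n\ge 5s$ and $\nu(H)\le s$ with $e(H)\ge \binom n4-\binom{n-s}4-\varepsilon n^4$, then all but $o(n^4)$ edges of $H$ meet some fixed $s$-set $S$. In the shifted setting, $S=[s]$. The approach is to decompose $H$ by the intersection pattern of each edge with $[s]$ and with a slightly larger initial segment. Adapting the Frankl--R\"odl--Ruci\'nski argument for $k=3$, we pass to the link $3$-graphs and $2$-graphs of vertices outside $[s]$. We then apply the known EMC for $k=3$ \cite{F17B} and the Erd\H{o}s--Gallai bound for $k=2$ to these links to bound the number of edges missing $[s]$. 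We also compare against the alternative extremal shapes $H_2$ and $H_3$: when $n\ge 5s$, each of these has at most $(1-c)\bigl(\binom n4-\binom{n-s}4\bigr)$ edges. This comparison shows that a near-maximal $H$ must be close to $H_1$ rather than to $H_2$, $H_3$ or $H_4$.

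This step is where I expect the real difficulty. The near-optimality of $H_2$ at the threshold $n\approx 5s$ makes the inequalities tight. The bound $n\ge 5s$ is likely exactly what makes the comparison with $H_2$ work, so careful counting of edges meeting $[2s+1]$ in at least two vertices will be needed.

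\emph{Exact step.} Suppose $H$ is $\varepsilon$-close to $H_1$ with centre $S$ (take $|S|=s$). Let $B\subseteq S$ be the set of vertices of low degree, meaning degree at most $(1-\delta)\binom{n-1}3$, and let $M$ be a maximum matching among the edges avoiding $S\setminus B$. Because vertices in $S\setminus B$ have nearly full degree, any matching of $H-(S\setminus B)$ of size $|B|+1$ can be greedily extended using $S\setminus B$ to a matching of size $s+1$. Hence $\nu(H-(S\setminus B))\le |B|$. A counting argument then compares two quantities. Each low-degree vertex in $B$ loses $\Omega(n^3)$ edges relative to $H_1$, whereas the edges of $H-(S\setminus B)$ avoiding $S$ can contribute only $O(|B|\,n^3\cdot\text{small})$ extra edges, by the EMC for $4$-graphs with matching number $|B|\ll n$ (Erd\H{o}s' large-$n$ regime, or \cite{F13}). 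The losses outweigh the gains, so $B=\emptyset$. It follows that $H-S$ has no edge, since otherwise a greedy extension produces $s+1$ disjoint edges. Therefore every edge meets $S$, and $e(H)\le\binom n4-\binom{n-s}4$.
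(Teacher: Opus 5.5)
Your preliminary reductions are fine. Using \cite{FK22} even narrows the range to $5s\le n<6s$, whereas the paper only uses \cite{F13} to reduce to $n/8\le s\le n/5$. But there is a genuine gap: the stability step, which carries essentially all of the content of the theorem, is not proved, and the mechanism you sketch cannot work. Knowing that $e(H_2),e(H_3),e(H_4)$ are smaller than $e(H_1)$ says nothing about an arbitrary near-extremal $H$, which a priori need not resemble any $H_i$. Proving that it must be close to $H_1$ is exactly what has to be shown. (Also, $H_2$ is not a near-competitor at $n\approx 5s$: its edge density is about $328/625$, against $369/625$ for $H_1$.) Likewise, ``pass to links of vertices outside $[s]$ and apply the $k=3$ EMC'' is not an argument for bounding the edges missing $[s]$ under the integral hypothesis $\nu(H)\le s$.

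The paper needs two ideas that are absent from your sketch. First, it proves stability under the weaker hypothesis $\nu^*(G)\le s$ (Theorem~\ref{FStableK4}). It takes a minimum fractional cover $\omega$, which is non-increasing because $G$ is stable and vanishes beyond $4s$ by complementary slackness, and writes $\sum_{i\le s}\omega(i)=as$, $\omega(s+1)=b$. It bounds the link $\mathcal F(\{i\})$ of each $i\in[s+1]$ via $\nu^*(\mathcal F(\{i\}))\le (1-a)s/(1-\omega(i))$ and the $k=3$ EMC \cite{F17B}, with a convexity argument pushing the weights to extreme values. It bounds the edges avoiding $[s+1]$ via $|\mathcal F|\le \nu(\mathcal F)|\partial\mathcal F|$, and then verifies the resulting polynomial inequalities for $s\le (n-3)/(5-\delta)$. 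Second, it transfers this to integral matchings. If $G$ is far from $H_1$, it adds $t\approx (n-4s)/3$ universal vertices and takes $n^{1.1}$ random vertex sets of size about $n^{0.1}$. Each set inherits the edge density and non-closeness, and hence has a perfect fractional matching by the fractional result. The lemma of Alon et al.\ \cite{AFHRRS12} and the Frankl--R\"odl theorem \cite{FR85} then give an almost perfect matching, i.e.\ $s+1$ disjoint edges in $G$.

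Your exact step is also wrong as written. The reduction $\nu(H-(S\setminus B))\le |B|$ is correct, but the counting after it is not. The EMC bounds the number of edges avoiding $S$ only by about $\binom{n-s}{4}-\binom{n-s-|B|}{4}\approx |B|\binom{n-s}{3}$, with no small factor. The definition of $B$ guarantees a deficit of only about $|B|\delta\binom{n-1}{3}/4$, so the ``losses outweigh the gains'' comparison fails.

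The correct finish, which is the paper's, applies Frankl's bound \cite{F13} to all of $H-(S\setminus B)$. This graph has $n-s+|B|\ge 7|B|+4$ vertices and matching number at most $|B|$, so $e(H-(S\setminus B))\le \binom{n-s+|B|}{4}-\binom{n-s}{4}$. Adding the at most $\binom n4-\binom{n-s+|B|}{4}$ edges meeting $S\setminus B$ gives $e(H)\le\binom n4-\binom{n-s}4$ directly, with no need to show $B=\emptyset$.
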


The main technical result (from which the above theorem follows) we prove is a stability result, 
which applies to all integers $s$ in the range $cn\leq s\leq n/5$ and sufficiently large $n$,
where $c\in (0,1/5)$ is a constant. 
To state it precisely, we begin with the following two definitions.

\begin{defi}\label{stable}
	Let $n\geq k\geq 2$. A $k$-graph $G$ on $[n]$ is called \textbf{stable} if for any $\{a_{1},a_{2},\dots,a_{k}\}\in E(G)$ and any $\{b_{1},b_{2},\dots,b_{k}\}\in\binom{[n]}{k}$, the condition $b_{i}\leq a_{i}$ for all $1\leq i\leq k$ implies $\{b_{1},b_{2},\dots,b_{k}\}\in E(G)$.
\end{defi}

\begin{defi}\label{close}
	Let $k$ be a positive integer and let $G,H$ be two $k$-graphs on the same vertex set of size $n$. For any real number $\varepsilon>0$, we say that $G$ is \textbf{$\varepsilon$-close} to $H$ if $|E(H)\setminus E(G)|<\varepsilon n^{k}$.
\end{defi}
    Note that this definition is asymmetric.

\begin{theo}\label{StableK4}
	Fix any constant $c\in (0,1/5)$. Then for any sufficiently small $0<\varepsilon\ll c$, there exists $n_{0}\in \N$ such that the following holds for all integers $n,s$ with $n\ge n_{0}$ and $cn\leq s\leq n/5$. If $G$ is a stable $4$-graph on $[n]$ with $\nu(G)\leq s$ and $e(G)\geq \binom{n}{4}-\binom{n-s}{4}-\varepsilon n^{4}$, then $G$ is $400\varepsilon^{1/4}$-close to $H_{1}$.
\end{theo}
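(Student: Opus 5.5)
We argue by contradiction, using the structure of stable families with bounded matching number. Since $G$ is stable, $\nu(G)\le s$ means the ``diagonal'' matching fails. Concretely, the $4$-sets $\{i, 2s+2-i, 2s+1+i, 4s+3-i\}$-type patterns (or, more conveniently, the standard shifted matching $\{i, s+1+i, 2s+2+i, 3s+3+i\}$ after reindexing) cannot all lie in $G$. By the Frankl-type shifting argument (as in the proof of the EMC for $n\ge(2s+1)k-s$ and in Frankl--R\"odl--Ruci\'nski for $k=3$), every edge $\{a_1<a_2<a_3<a_4\}$ of a stable $G$ with $\nu(G)\le s$ must satisfy at least one of the conditions
\[
a_1\le s,\quad a_2\le 2s+1,\quad a_3\le 3s+2,\quad a_4\le 4s+3,
\]
up to $O(1)$ shifts. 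The first step is to prove a weighted version of this. Partition $[n]$ into the intervals $I_1=[s]$, $I_2=(s,2s+1]$, $I_3=(2s+1,3s+2]$, $I_4=(3s+2,4s+3]$ and $R=(4s+3,n]$. Using stability together with a greedy/K\H{o}nig-type disjointness argument on the ``profile'' of edges, we obtain that $G$ is essentially contained in $H_1\cup H_2\cup H_3\cup H_4$ (with the thresholds $i(s+1)-1$). Moreover, the edges of $G$ not in $H_1$ force a ``budget'' constraint: a large number of edges of type $H_i$ with $i\ge2$ lying outside $H_1$ forces many non-edges inside $H_1$.

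The second step is the quantitative core. Let $x=|E(H_1)\setminus E(G)|/n^4$. Assume for contradiction that $x\ge 400\varepsilon^{1/4}$. We bound
\[
e(G)\le e(H_1)-xn^4+|E(G)\setminus E(H_1)|.
\]
The aim is to show that $|E(G)\setminus E(H_1)|\le xn^4-\varepsilon n^4$ whenever $x$ is this large. Stability lets us parametrize $G\setminus H_1$ by its minimal (``generating'') edges, which live on vertices $>s$ and have $a_2\le 2s+1$, $a_3\le 3s+2$ or $a_4\le 4s+3$. For each such generating edge $e$, the family of $s$ disjoint translates needed to complete a matching forces edges of $H_1$ to be missing. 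Specifically, let $t$ be the largest index such that a ``shifted'' edge in $G$ avoiding $[t]$ exists. Then vertices of $[t]$ can be matched greedily through $H_1$-edges only if those edges are present, so at least roughly $(s-t)\cdot\binom{n-s}{3}$-order edges through $[t']$ must be missing. Carrying this out amounts to optimizing a polynomial inequality in the parameters (how far the stable ``shape'' extends in each of the four coordinates), and it is here that the assumption $n\ge 5s$ (so $n-4s\ge s$) is used. That assumption is needed so that $\binom{n}{4}-\binom{n-s}{4}$ beats the competing constructions $H_2,H_3,H_4$ by a margin linear in the deviation. This is also where the $\varepsilon^{1/4}$ loss arises: a deviation of $\delta$ in a single coordinate costs $\Theta(\delta^4 n^4)$ in the worst case. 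Here $cn\le s$ keeps all the relevant quantities of order $n^4$.

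I expect the main obstacle to be the second step, specifically the case near $s\approx n/5$. There $e(H_1)$ and $e(H_2)$ (and mixtures) become close, so the gap must be shown to be at least of order $\delta^4 n^4$ uniformly in $s\in[cn,n/5]$, rather than merely positive. I would first attempt a continuous relaxation. Replace $G$ by its limiting stable shape, a down-set in the simplex $\{0\le y_1\le\dots\le y_4\le 1\}$, with the constraint that no $s/n$-fraction ``diagonal'' matching exists. Then prove the inequality for shapes via a Lagrangian/compactness argument, and finally discretize with error $o(n^4)$ absorbed into $\varepsilon$ for $n\ge n_0$.
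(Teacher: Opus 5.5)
You do have one correct ingredient, and it holds exactly, with no $O(1)$ slack. If $a_i\ge i(s+1)$ for all $i$, then the $s+1$ disjoint sets $\{j,(s+1)+j,2(s+1)+j,3(s+1)+j\}$, $1\le j\le s+1$, lie coordinatewise below $e$. Hence a stable $G$ with $\nu(G)\le s$ satisfies $G\subseteq H_1\cup H_2\cup H_3\cup H_4$.

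\textbf{The diagonal constraint is far too weak.} Avoiding the diagonal pattern is only a necessary condition for $\nu(G)\le s$. The stable $4$-graph $H_1\cup H_2\cup H_3\cup H_4$ contains no diagonal configuration, yet it has matching number larger than $s$: take $s$ edges through $1,\dots,s$ and one edge $\{s+1,s+2,u,v\}\in H_2$. At $s=n/5$ its density is $1-125/5^4=4/5$, since the missing $4$-sets are counted by parking functions. That is far above $1-(4/5)^4\approx 0.59$ for $H_1$. So the continuous relaxation you propose, stable shapes with no $s/n$-diagonal matching, has the wrong optimum. No Lagrangian or compactness argument on it can yield the theorem. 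Compactness would in any case give only an inexplicit dependence, not $400\varepsilon^{1/4}$.

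\textbf{The ``budget'' step is not an argument.} Everything therefore rests on this step. A stable family is a down-set in a four-dimensional poset, not something described by four ``extent'' parameters, so there is no reduction to a polynomial inequality. The mechanism you sketch is also vacuous as written: if $G\not\subseteq H_1$, your index $t$ is at least $s$, so $(s-t)\binom{n-s}{3}\le 0$. Incidentally, the difficulty near $s=n/5$ is not competition from $H_2$: there $e(H_2)\approx\frac{328}{625}\binom n4$ against $e(H_1)\approx\frac{369}{625}\binom n4$.

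\textbf{The missing idea.} You need a quantitative surrogate for $\nu(G)\le s$ that can be analysed vertex by vertex. The paper uses the fractional matching number and first proves the statement under $\nu^*(G)\le s$, for $s$ up to $(n-3)/(5-\delta)$.
\begin{enumerate}
\item LP duality gives a minimum fractional cover $\omega$, which can be taken non-increasing by stability and vanishes beyond $4s$.
\item With $\sum_{i\le s}\omega(i)=as$ and $\omega(s+1)=b$, the link $\mathcal F(\{i\})$ of $i\in[s+1]$ outside $[s+1]$ has $\nu\le(1-a)s/(1-\omega(i))$. It is therefore bounded by the $k=3$ case of the Erd\H{o}s Matching Conjecture.
\item $\mathcal F(\emptyset)$ is bounded via $|\mathcal F|\le\nu(\mathcal F)|\partial\mathcal F|$.
\item Convexity of these bounds in $\omega(i)$ pushes the weights to extreme values. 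A computer-verified polynomial inequality then shows the edge count falls short of $e(H_1)-\varepsilon n^4$ unless $G$ is close to $H_1$.
\end{enumerate}

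Transfer to integer matchings is a separate step. Add about $(n-4s)/3$ universal vertices and sample $n^{1.1}$ random sets of size about $n^{0.1}$. Each sample inherits the density and non-closeness, and hence has a perfect fractional matching by the fractional theorem. The Alon et al.\ spanning-subgraph construction combined with the Frankl--R\"odl nibble then gives a matching covering almost all vertices, contradicting $\nu(G)\le s$.

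Your proposal contains neither ingredient, nor a direct shifting argument that achieves the sharp trade-off in their place.
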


\noindent The proof of this result relies on a structural theorem for dense $4$-graphs with a given fractional matching number, which we refer to as a fractional stability result (see Theorem~\ref{FStableK4}). This provides a new approach to studying such extremal problems and may be of independent interest.

We also apply this stability result to other well-studied extremal problems concerning matchings. 
Let $H$ be a $k$-graph, and let $d$ be an integer with $1\leq d\leq k-1$. 
For any $S\in\binom{V(H)}{d}$, let $\textrm{deg}_{H}(S)=|\{e\in E(H): S\subset e\}|$ denote the \emph{degree} of $S$ in $H$. 
The \emph{minimum $d$-degree} of $H$ is then given by $\delta_{d}(H):=\min_{S\in\binom{V(H)}{d}}\textrm{deg}_{H}(S)$.
For integers $n,s$ with $0\leq s\leq n/k$, let $m_{d}^{s}(k,n)$ denote the minimum integer $m$ such that every $n$-vertex $k$-graph $H$ with $\delta_{d}(H)\geq m$ contains a matching of size $s$. 
The following conjecture was introduced by K\"{u}hn, Osthus, and Townsend in \cite{KOT14}.

\begin{conj}\label{KOT}
For any $\varepsilon>0$ and all integers $n,k,d,s$ with $1\leq d\leq k-1$ and $0\leq s\leq(1-\varepsilon)n/k$,
\begin{displaymath}
	m_{d}^{s}(k,n)=\left(1-\left(1-\frac{s}{n}\right)^{k-d}+o(1)\right)\binom{n-d}{k-d}.
\end{displaymath}
\end{conj}

\noindent For the interesting case of perfect matchings (i.e., $s=n/k$), the following conjecture was proposed earlier by H\`{a}n, Person, and Schacht in \cite{HPS09} and independently by K\"{u}hn and Osthus in \cite{KO09}.

\begin{conj}\label{HPSKO}
	Let $n,k,d$ be positive integers such that $1\leq d\leq k-1$ and $n/k\in\N$. Then
\begin{displaymath}
	m_{d}^{n/k}(k,n)=\left(\max\left\{1-\left(\frac{k-1}{k}\right)^{k-d}, \frac{1}{2}\right\}+o(1)\right)\binom{n-d}{k-d}.
\end{displaymath}
\end{conj}

\noindent The first term in Conjecture \ref{HPSKO} corresponds to the $k$-graph $H_{1}$, while the second term arises from a parity-based construction presented by K\"{u}hn and Osthus (see \cite{KO06}).

Alon et al. \cite{AFHRRS12} proved Conjecture \ref{HPSKO} for $k-4\leq d \leq k-1$ by reducing it to a probabilistic conjecture of Samuels, thereby settling the case $k \le 5$. 
For $d = k-1$, the exact value of $m_d^{n/k}(k,n)$ was determined for sufficiently large $n$ by R\"{o}dl, Ruci\'{n}ski, and Szemer\'{e}di \cite{RRS09}. 
Treglown and Zhao \cite{TZ13} subsequently extended this, resolving the conjecture whenever $k \in 4\mathbb{Z}$ and $d \ge k/2$, refining earlier asymptotic bounds from \cite{P08,RRS06,RRS09}.
For $d < k/2$, progress has been incremental.  
Concerning Conjecture~\ref{HPSKO}, the case $(k,d) = (3,1)$ was resolved independently by Khan \cite{K13} and by Kühn, Osthus, and Treglown \cite{KOT13}, while $(k,d) = (4,1)$ was settled by Khan \cite{K16}. 
Treglown and Zhao \cite{TZ16} further resolved $(k,d) \in \{(5,2), (7,3)\}$.
Han \cite{H16B} provided the best known upper bound on $m_d^{n/k}(k,n)$ for general $d$, establishing exact values for $0.42k \le d < k/2$ and for $(k,d) \in \{(12,5), (17,7)\}$.  
Lu and Yu \cite{LY22} subsequently extended this to $0.4k \le d < k/2$. 
Additional results appear in \cite{CGHW22,DH81,GLJ23,HPS09,HZ17,H16A,KOT14,LYY21,MR11}; Table~\ref{table} provides a summary of all known progress on $m_d^s(k,n)$ for sufficiently large $n$.

\begin{table}[htbp]
\belowrulesep=0pt \aboverulesep=0pt
\centering
\caption{Known progress on $m_{d}^{s}(k,n)$.}
\label{table}
\begin{tabular}{@{} 
    >{\raggedright\arraybackslash}p{2.7cm}|
    >{\raggedright\arraybackslash}p{6.0cm}|
    >{\raggedright\arraybackslash}p{4.5cm}|
    >{\centering\arraybackslash}p{1.8cm}
@{}}
\toprule
 &  \textbf{The Parameter} $s$ & \textbf{Parameters} $k,d$ & \textbf{Ref.}\\
\midrule[\heavyrulewidth]


\multirow{2}{3.4cm}{\textbf{Asymptotic Results}}
& $s=n/k$ & $k-4\leq d\leq k-1$ & \cite{AFHRRS12}\\
\cline{2-4}
& $s<\min\{1/2(k-d),(1-o(1))/k\}n$ & $1\leq d\leq k-2$ & \cite{KOT14}\\
\midrule[\heavyrulewidth]


\multirow{9}{3.4cm}{\textbf{Exact Results}}
& \multirow{6}{=}[-0.3em]{$s=n/k$}
& $k/2\leq d\leq k-1$ & \cite{RRS09,TZ13}\\
\cline{3-4}
& & $0.4k\leq d<k/2$ & \cite{H16B,LY22}\\
\cline{3-4}
& & $(k,d)=(3,1)$ & \cite{K13,KOT13}\\
\cline{3-4}
& & $(k,d)=(4,1)$ & \cite{K16}\\
\cline{3-4}
& & $(k,d)\in\{(5,2),(7,3)\}$ & \cite{TZ16}\\
\cline{2-4}
& $s\leq \lfloor n/k\rfloor -1$ & $k/2<d\leq k-1$ & \cite{LYY21,CGHW22}\\
\cline{2-4}
& $s\leq(k/2(k-1)-o(1))n/k$ & $k\geq 3$, $d=1$ & \cite{GLJ23}\\
\cline{2-4}
& $s\leq n/k$ & $(k,d)\in\{(5,1),(6,2)\}$ & $\bigstar$\\

\bottomrule
\addlinespace
\multicolumn{4}{@{}p{\dimexpr\textwidth-2\tabcolsep}@{}}{%
\footnotesize Prior results are cited accordingly, and new contributions established in this paper are marked with $\bigstar$.} \\
\end{tabular}
\end{table}

Using Theorem~\ref{StableK4}, we determine the exact value of $m_d^s(k,n)$ for $(k,d) \in \{(5,1),(6,2)\}$ for all sufficiently large $n$ and all integers $0 \le s \le n/k$ in the following statement, which confirms Conjectures~\ref{KOT} and \ref{HPSKO} in the corresponding cases in their precise forms.\footnote{The asymptotic form of Conjecture~\ref{HPSKO} for the case $(5,1)$ was previously established in \cite{AFHRRS12}.}

\begin{theo}\label{VDPC}
Let $(k,d) \in \{(5,1),(6,2)\}$. For all sufficiently large $n$ and all $s$ with $0 \le s \le n/k$,  
\[
m_d^s(k,n) = \binom{n-d}{k-d} - \binom{n-d-s+1}{k-d} + 1.
\]
\end{theo}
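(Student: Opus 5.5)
The lower bound comes from an explicit construction. For the upper bound, we reduce to the link hypergraph of a $d$-set, which is a $4$-graph, and then use Theorem~\ref{StableK4} together with a stability/absorption argument.

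\textbf{Lower bound.} Take the $k$-graph on $[n]$ whose edges meet a fixed set $S$ of size $s-1$. It has no matching of size $s$. For a $d$-set $D$ disjoint from $S$, the degree is $\binom{n-d}{k-d}-\binom{n-d-s+1}{k-d}$, and $d$-sets meeting $S$ have full degree. So $m_d^s(k,n)$ is at least the claimed value.

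\textbf{Upper bound: regimes.} Let $H$ be a $k$-graph with $\delta_d(H)\ge \binom{n-d}{k-d}-\binom{n-d-s+1}{k-d}+1$. Note that $k-d=4$ in both cases. We split by the size of $s$.
\begin{itemize}
\item For $s\le cn$ with $c$ small, the known result of \cite{GLJ23} (for $d=1$), or a direct greedy argument that adds one edge at a time by averaging over the link of a $d$-set outside the current matching, gives a matching of size $s$.
\item For $cn\le s\le n/k$, we argue by contradiction. Suppose $\nu(H)<s$ and take a maximum matching $M$, with $|M|=s-1$.
\end{itemize}

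\textbf{Upper bound: the main regime.} For a $d$-set $D$ in $V\setminus V(M)$, consider the link $4$-graph $H_D$ on the remaining $n-d$ vertices. Its matching number is controlled: if $H_D$ had $s$ disjoint edges, at most about $d$ of them could be blocked by $D$. By exchanging edges of $M$ using vertices outside $V(M)$, which is possible since $n-k(s-1)$ is large, a large matching in $H_D$ would enlarge $M$. So $\nu(H_D)\le s-1+O(1)$. The parameter fits Theorem~\ref{StableK4}, because $n'=n-d$ and $s'\approx s\le n/k\le n'/5$ for $k\in\{5,6\}$.

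The link is not stable, so we first apply the standard shifting reduction. Shifting preserves $e(H_D)$ and does not increase $\nu$, so Theorem~\ref{StableK4} applies to the shifted link and gives that it is $\varepsilon'$-close to $H_1$. The degree condition gives $e(H_D)\ge \binom{n'}{4}-\binom{n'-s}{4}$ exactly, so $H_D$ is extremal up to lower-order terms.

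Transferring closeness back from the shifted link to $H_D$ itself is the delicate point. I would try to get it via the Frankl-type argument that almost all $4$-sets meeting a covering set of size about $s$ are present. Summing over $D$, this yields a set $S$ of about $s-1$ vertices such that almost every edge of $H$ meets $S$ and $H$ contains almost all $k$-sets meeting $S$ in exactly one vertex.

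\textbf{Exact step (main obstacle).} We must show $H$ has a matching of size $s$, using the exact degree bound. Let $S'$ be the set of vertices of large degree into the typical structure, and split by its size.
\begin{itemize}
\item If $|S'|\ge s$, match each vertex of $S'$ greedily into edges using only one vertex of $S'$; the near-completeness of $H$ at $S$ makes this possible.
\item If $|S'|\le s-1$, consider a $d$-set $D$ outside $S'$. Since its degree exceeds the number of $k$-sets meeting $S'$, some edge avoids $S'$. Iterating gives $s-|S'|$ disjoint edges avoiding $S'$. The atypical vertices are absorbed first, using the degree condition to cover them with edges containing exactly one vertex of $S'$. The remaining vertices of $S'$ are then matched into leftover typical vertices.
\end{itemize}
The counting here is tight, especially at $s=n/k$ where almost no vertices are left over, so a careful case analysis of the atypical vertices is needed. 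I expect this step to be the hardest.
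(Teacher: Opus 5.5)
Your main regime rests on the claim $\nu(H_D)\le s-1+O(1)$ for the link $4$-graph of a $d$-set $D\subseteq V\setminus V(M)$, and that step does not go through. A matching in $H_D$ lifts to edges of $H$ that all contain $D$. They pairwise intersect and cannot enlarge $M$. To use link edges at all, you would have to combine the links of many disjoint $d$-sets outside $V(M)$ and re-route edges of $M$ through free vertices. But only $n-k(s-1)$ vertices are free, which is exactly $k$ at $s=n/k$ (as you note yourself in the last step). So the assertion that $n-k(s-1)$ is large fails as $s$ approaches $n/k$, including the perfect-matching case.

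Without a bound on $\nu(H_D)$, Theorem~\ref{StableK4} cannot be invoked. The later steps are also left open: pulling closeness back from the shifted link to $H_D$, gluing the per-link cover sets into one global $S$, and the exact case analysis. Your small-$s$ regime has a similar problem for $(6,2)$. As described, the greedy step finds an edge through $D$ avoiding $V(M)$ only while $k|M|\le s-1$. At the exact threshold it therefore yields roughly $s/k$ edges, not $s$, and \cite{GLJ23} covers only $d=1$.

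The paper avoids both problems. First, adding $t=\lfloor (n-ks)/(k-1)\rfloor-1$ full-degree vertices (Construction~\ref{Cons2}) preserves the exact degree condition and reduces every $s$ to $s=\lfloor n/k\rfloor-1$, so no regime split is needed. Second, stability enters only through \emph{fractional} matchings, in Lemma~\ref{PFM2}. If a $k$-graph has no perfect fractional matching, LP duality gives a fractional vertex cover $\omega$ of weight below $n/k$. For a $d$-set $D$ with $\omega(D)<1$, the rescaled restriction $\omega/(1-\omega(D))$ is a fractional cover of the link of $D$. This bounds $\nu^{*}$, hence $\nu$, of the link, and needs no free vertices at all. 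Combined with the degree condition, Theorem~\ref{StableK4} then forces a near-extremal structure. That structure is ruled out by the hypothesis $\alpha(H)\le(1-1/k-\zeta/5)n$, which Lemmas~\ref{Ind} and~\ref{IndRan} supply for the random samples $H[R^i]$.

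The resulting perfect fractional matchings feed Lemma~\ref{random2} and the Frankl--R\"odl theorem (Lemma~\ref{NPM}), giving an almost perfect matching. The absorbing matching of Lemma~\ref{Abs} completes it when $G$ is far from $H^{k-d}(U,W)$, and the close case is quoted from \cite{LYY21}. The passage through fractional covers is the idea your proposal is missing.
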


The remainder of the paper is organized as follows. In Section~2, we prove a fractional version of Theorem~\ref{StableK4}, using several auxiliary lemmas whose proofs are deferred to Appendix~\ref{sec:Appendix}. In Section~3, we apply this fractional result to establish the stability theorem (Theorem~\ref{StableK4}), which is the key ingredient in the proofs of our main results. In Section~4, we derive Theorem~\ref{EMCK4} as a direct consequence of the stability theorem. Finally, Section~5 contains the proof of Theorem~\ref{VDPC}.

\section{A fractional stability}
\noindent In this section, we establish a fractional version of Theorem~\ref{StableK4}, which plays a key role in its proof.
We begin with some necessary definitions.
A \emph{fractional matching} in a $k$-graph $H=(V,E)$ is a function $\phi : E\to [0,1]$ such that for each $v\in V$, $\sum_{v\in e}\phi(e)\leq 1$. 
The quantity $\sum_{e\in E}\phi(e)$ is called the size of $\phi$. 
The \emph{fractional matching number} of $H$, denoted by $\nu^{*}(H)$, is the maximum possible size of a fractional matching in $H$. If $\nu^{*}(H)=|V|/k$, or equivalently, for all $v\in V$ we have $\sum_{v\in e}\phi(e)=1$, then we call $\phi$ a \emph{perfect} fractional matching.

We now state the main result of this section. 
We emphasize that the positive constant $\delta$ in the statement is essential for deriving Theorem~\ref{StableK4} from Theorem~\ref{FStableK4} (see Section~\ref{sec:mainpf}).

\begin{theo}\label{FStableK4}
	Let $c\in(0,1/5)$ be a constant and set $\delta=10^{-10}$. Then for any sufficiently small  $0<\varepsilon\ll c$, there exists $n_{0}\in \N$ such that the following holds for all integers $n,s$ with $n\geq n_{0}$ and $cn\leq s\leq (n-3)/(5-\delta)$. 
    If $G$ is a stable $4$-graph on $[n]$ with $\nu^{*}(G)\leq s$ and $e(G)\geq \binom{n}{4}-\binom{n-s}{4}-\varepsilon n^{4}$, then $G$ is $100\varepsilon^{1/4}$-close to the extremal $4$-graph $H_{1}$.
\end{theo}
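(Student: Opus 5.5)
We will argue by LP duality, combined with the shifted structure of $G$. Since $\nu^{*}(G)\le s$, LP duality gives a fractional vertex cover $w:[n]\to[0,1]$ with $\sum_{v}w(v)\le s$ and $\sum_{v\in e}w(v)\ge 1$ for every $e\in E(G)$. Because $G$ is stable, we may symmetrize and assume $w$ is non-increasing in the vertex label. Indeed, averaging $w$ over coordinate swaps preserves the covering property on a shifted family, and then sorting preserves it as well. Consequently $G$ is contained in the ``threshold'' $4$-graph $G_w=\{e:\sum_{v\in e}w(v)\ge 1\}$. It therefore suffices to analyse the following continuous optimisation problem: among non-increasing weight profiles $w$ with total weight at most $s$, maximise $e(G_w)$. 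After normalising, take $x=s/n\in[c,1/(5-\delta)]$ and $w$ a non-increasing function on $[0,1]$ with $\int w\le x$. The quantity to maximise is then the measure $\mu(w)$ of the set of $4$-tuples in $[0,1]^4$ whose weight sum is at least $1$.

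The key steps, in order, are as follows.

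\textbf{Step 1 (reduction to step profiles).} Show that it suffices to consider $w$ taking values in a small set such as $\{1,\tfrac12,\tfrac13,\tfrac14,0\}$. Vertices of weight $1$ cover everything, and vertices of weight $\tfrac{1}{j}$ cover edges containing $j$ of them. We intend to prove this by a local exchange (rounding) argument. Given any profile, the covered region depends only on which sums reach $1$, so we can push values down to the nearest ``useful'' threshold without losing covered tuples, and then redistribute the freed weight. This gives a finite-dimensional family parametrised by the lengths $y_1,y_2,y_3,y_4$ of the intervals with weights $1,\tfrac12,\tfrac13,\tfrac14$, subject to $y_1+y_2/2+y_3/3+y_4/4\le x$. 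These profiles correspond to mixtures of the constructions $H_1,\dots,H_4$ and their combinations. I expect to need some additional mixed values (such as $\tfrac23$ or $\tfrac34$, paired with $\tfrac13$ or $\tfrac14$), so I will first enumerate all minimal threshold configurations on $4$ elements.

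\textbf{Step 2 (optimisation with a gap).} Compute $\mu$ for each profile as an explicit polynomial in the $y_j$. Then show that for $x\le 1/(5-\delta)$ the maximum $1-(1-x)^4$ is attained uniquely at $y_1=x$, with all other $y_j=0$. Moreover, we need a quantitative stability statement: if $\mu(w)\ge 1-(1-x)^4-24\varepsilon$, then $y_1\ge x-O(\varepsilon^{1/4})$, or more weakly, the set where $w\ge 1$ has measure close to $x$. The gap with the competitor $H_4$, whose density is $(4x)^4$ roughly, needs $x$ bounded away from the crossing point $\approx 1/5$; this is exactly where $\delta>0$ enters. The stability should follow from a derivative estimate: moving weight from the $y_1$-block to lower tiers decreases $\mu$ at a rate bounded below by a constant depending on $\delta$ and $c$. \textbf{This step is the main obstacle.} The objective is non-concave, and we need a uniform quantitative gap across the whole range $c\le x\le 1/(5-\delta)$ with the tiny $\delta=10^{-10}$. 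I would first try a Lagrangian/case analysis over which tiers are nonzero, checking boundary cases numerically via explicit polynomials.

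\textbf{Step 3 (back to closeness).} Suppose the measure of $\{w\ge 1\}$, i.e.\ the set of vertices $[t]$ of weight $1$ (an initial segment by monotonicity), is within $\eta=O(\varepsilon^{1/4})$ of $s/n$. Every edge of $G$ not meeting $[t]$ lies in $G_w$ restricted to vertices of weight $<1$, and that part has measure $O(\eta)$ by Step 2. So $e(G)\ge \binom n4-\binom{n-s}4-\varepsilon n^4$ forces $G$ to contain all but $O(\eta)n^4$ of the $4$-sets meeting $[s]$. The latter count uses stability of $G$ and the fact that $|[t]\triangle[s]|\le\eta n$. Tracking constants then gives $|E(H_1)\setminus E(G)|<100\varepsilon^{1/4}n^4$. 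The integrality effects from the discrete-to-continuous passage are $O(n^3)$, which is absorbed for $n\ge n_0$.
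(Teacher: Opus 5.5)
\textbf{There is a genuine gap: Steps 1 and 2 carry the entire content of the theorem, and as formulated neither can work.}

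\emph{Step 2 aims at a false statement.} Take $w=1-3\epsilon'$ on $[0,x']$ and $w=\epsilon'$ on $(x',1]$, with $x'+\epsilon'(1-4x')=x$. A $4$-tuple with exactly one coordinate in $[0,x']$ has weight exactly $1$, so $\mu(w)=1-(1-x')^4\ge 1-(1-x)^4-4\epsilon'$. Yet $\{w\ge 1\}=\emptyset$. The discrete version is $G=H_1$ built on $[s']$ with $s'=s-\lceil\epsilon' n\rceil$, together with the analogous cover, whose total weight is $s'+\epsilon'(n-4s')\le s$. This $G$ satisfies every hypothesis, and also the conclusion, but its cover has no vertex of weight $1$, so the premise of your Step 3 fails. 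A usable stability statement must concern the covered family $G_w$, or must use minimality of $w$ through complementary slackness; the level set $\{w=1\}$ of an arbitrary cover carries no information.

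\emph{Step 1 does not yield a finite family.} In the example above both values are pinned: lowering either one uncovers tuples, and neither lies in $\{1,\frac12,\frac13,\frac14\}$. Weight-minimising profiles for a fixed covered family are LP vertices whose values have denominators growing with the number of levels. So there is no finite list of ``minimal threshold configurations'' unless you first bound the number of levels, which you do not. ``Redistribute the freed weight'' has no monotonicity or termination argument. It also changes which vertices have weight $1$, so stability of a rounded profile would not transfer back to $w$.

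\emph{The core optimisation is left open.} Even without stability, Step 2 in your normalisation says $\Pr[X_1+\dots+X_4\ge 1]\le 1-(1-x)^4$ for i.i.d.\ $[0,1]$-valued $X_i$ with mean at most $x$. That is the i.i.d.\ case of Samuels' inequality for four variables, a nontrivial theorem, and you would need a quantitative stability version of it, which you explicitly leave open.

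\emph{Your account of $\delta$ is also off.} We have $1/(5-\delta)>1/5$, so $\delta$ gives slack \emph{beyond} $1/5$; this is needed for $s^*$ in Lemma~\ref{PFM}. Moreover, at $x=1/5$ the weight-$\frac14$ profile gives only $(4x)^4\approx 0.41$ against $1-(1-x)^4\approx 0.59$, so there is no crossing near $1/5$ to avoid.

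\textbf{How the paper avoids this.} It never optimises over weight profiles of a $4$-graph; instead it reduces to the solved $3$-uniform case. Let $\omega$ be a minimum cover, $a=\frac1s\sum_{i\le s}\omega(i)$ and $b=\omega(s+1)$.
\begin{enumerate}[(1)]
\item Edges are split by their trace on $[s+1]$, so only $\mathcal F(\emptyset)$ and the links $\mathcal F(\{i\})$ can make up the deficit.
\item Each link is a $3$-graph with $\nu\le (1-a)s/(1-\omega(i))$, so it is bounded by the \emph{exact} EMC for $k=3$.
\item Frankl's inequality $|\mathcal F|\le\nu(\mathcal F)|\partial\mathcal F|$, together with shiftedness, gives $|\mathcal F(\emptyset)|\le (1-a)s\,|\mathcal F(\{s+1\})|$.
\item Convexity of the link bound (Lemma~\ref{convex}) lets one smooth $\omega(1),\dots,\omega(s)$. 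This forces at least $(1-\delta)(1-a)s/\beta$ links to lose $\binom{m-\mu s}{3}$ each.
\item Lemma~\ref{Prebound} (EMC for $k=3$ plus Erd\H{o}s--Gallai) controls $|\mathcal F(\{s+1\})|$.
\end{enumerate}
Everything then collapses to two-variable polynomial inequalities in $(a,b)$ (Lemmas~\ref{Maxvalue} and \ref{Calculate}). The degenerate cases are handled by direct counting: $b<1/4$, where $E(G)\subseteq E(H_1)$, and $a\ge 1-5\varepsilon^{1/4}$. Closeness follows by contradiction with the lower bound on $|\mathcal F(\emptyset)|+\sum_i|\mathcal F(\{i\})|$. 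This reduction to the $3$-uniform EMC is the idea your plan is missing; without it you are left needing a stability form of Samuels' inequality.
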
	

From a technical standpoint, the proof of Theorem~\ref{FStableK4} relies crucially on the resolution of the case $k=3$ of the EMC (see the proof of Claim~\ref{claim:sum-upp}). 

\subsection{Preliminary results}

\noindent Before proving Theorem~\ref{FStableK4}, we first establish several preliminary results.
  
The determination of $\nu^{*}(H)$ is a classical linear programming problem, and we therefore also consider its dual.
A \emph{fractional vertex cover} in a $k$-graph $H=(V,E)$ is a function $\omega \colon V \to [0,1]$ such that
$\sum_{v \in e} \omega(v) \ge 1$ for every $e\in E.$
The quantity $\sum_{v \in V} \omega(v)$ is called the \emph{size} of $\omega$.
The \emph{fractional covering number} of $H$, denoted by $\tau^{*}(H)$, is the minimum possible size of a fractional vertex cover in $H$.
Then, for every $k$-graph $H$, the Duality Theorem implies that 
\[
\nu(H) \le \nu^{*}(H) = \tau^{*}(H) \le \tau(H),
\]
where $\tau(H)$ denotes the \emph{vertex cover number} of $H$, that is, the minimum size of a subset of $V$ that intersects every edge of $H$.
By the complementary slackness conditions of linear programming, we obtain the following proposition.

\begin{prop}\label{CS}
	Let $n$ and $k$ be positive integers. Consider a $k$-graph $H$ on $[n]$, and let $s=\nu^{*}(H)=\tau^{*}(H)$. Suppose $\omega: [n] \to [0,1]$ is a minimum fractional vertex cover of $H$, and define its support as $R=\{i\in[n] : \omega(i)>0\}$. Then $|R|\le ks$.
\end{prop}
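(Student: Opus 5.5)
We prove Proposition~\ref{CS} by pairing $\omega$ with an optimal primal solution and using complementary slackness. Fix a maximum fractional matching $\phi$ of $H$, so that $\sum_{e\in E}\phi(e)=s$, together with the given minimum fractional vertex cover $\omega$ of size $s$. Strong duality gives $\sum_{e}\phi(e)=\sum_{v}\omega(v)$. Complementary slackness then says: for every vertex $v$ with $\omega(v)>0$, the vertex constraint is tight, that is, $\sum_{e\ni v}\phi(e)=1$. The usual formulation has the packing constraint $\phi(e)\le 1$, but it is implied by the vertex constraints (as $k\ge1$), so the linear program can be taken as $\max \sum\phi(e)$ subject to $\sum_{e\ni v}\phi(e)\le 1$ and $\phi\ge 0$. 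The dual of this program is exactly the minimum of $\sum\omega(v)$ subject to $\sum_{v\in e}\omega(v)\ge1$ and $\omega\ge 0$. An optimal $\omega$ never needs values above $1$; in any case, the given $\omega$ takes values in $[0,1]$ by hypothesis and is feasible and optimal for the dual.

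To keep the argument self-contained, I would derive the tightness directly rather than quote slackness. Write
\[
s=\sum_{e}\phi(e)\le \sum_{e}\phi(e)\sum_{v\in e}\omega(v)=\sum_{v}\omega(v)\sum_{e\ni v}\phi(e)\le \sum_v \omega(v)=s .
\]
The first inequality uses $\sum_{v\in e}\omega(v)\ge1$ and $\phi\ge0$; the second uses $\sum_{e\ni v}\phi(e)\le 1$ and $\omega\ge0$. Hence both inequalities are equalities. In particular, $\omega(v)\bigl(1-\sum_{e\ni v}\phi(e)\bigr)=0$ for every $v$, since each such term is nonnegative and their sum is zero. So every $v\in R$ satisfies $\sum_{e\ni v}\phi(e)=1$.

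Now double count:
\[
|R|=\sum_{v\in R}\sum_{e\ni v}\phi(e)\le \sum_{v\in[n]}\sum_{e\ni v}\phi(e)=\sum_{e}\phi(e)\,|e|=k\sum_e\phi(e)=ks .
\]
This gives $|R|\le ks$.

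The only delicate point is the existence of an optimal $\phi$ attaining $\nu^*(H)=\tau^*(H)=s$. This follows from compactness of the feasible polytope together with LP duality, which the paper already invokes. No other obstacle is expected.
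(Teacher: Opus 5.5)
Your proof is correct and follows the paper's argument. You fix a maximum fractional matching $\phi$, use complementary slackness to get $\sum_{e\ni v}\phi(e)=1$ for every $v\in R$, and double count to obtain $|R|\le k\sum_e\phi(e)=ks$. Your explicit inequality chain proves exactly the direction of slackness that is needed; the paper's phrasing asserts an ``if and only if'', but only the forward implication is used.
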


\begin{proof}
	Let $\phi : E(H)\to [0,1]$ be a maximum fractional matching of $H$. By complementary slackness, for each $i\in[n]$, we have $\omega(i)>0$ if and only if $\sum_{i\in e\in E}\phi(e)=1$. Consequently, 
\begin{displaymath}
    ks=k\sum_{e\in E(H)}\phi(e)=\sum_{i\in[n]}\sum_{i\in e}\phi(e)\geq \sum_{i\in R}\sum_{i\in e}\phi(e)=|R|,
\end{displaymath}
    which completes our proof.
\end{proof}

The proof of Theorem~\ref{FStableK4} relies on the following five lemmas.  
The first is a classical result.
The remaining auxiliary lemmas (Lemmas~\ref{convex}, \ref{Prebound}, \ref{Maxvalue}, and \ref{Calculate}) are proved in Appendix~\ref{sec:Appendix}.

\begin{lem}[Frankl \cite{F13}]\label{Lemma 1}
	For $\mathcal{F}\subseteq \binom{[n]}{k}$, where $n,k$ are positive integers, let $\partial\mathcal{F}=\{F\in\binom{[n]}{k-1}: F\subseteq F' \textrm{ for some } F'\in\mathcal{F}\}$. Then $|\mathcal{F}|\leq \nu(\mathcal{F})|\partial\mathcal{F}|$.
\end{lem}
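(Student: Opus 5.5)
The plan is induction on $n$ via the standard shifting-free shadow argument. Partition $\mathcal{F}$ by its maximal element; the key intermediate goal is a stronger statement that immediately implies $|\mathcal{F}|\le \nu(\mathcal{F})|\partial\mathcal{F}|$. Concretely, I would reduce to the case where $\mathcal{F}$ is shifted (stable), since shifting does not increase $\nu$ and does not increase $|\partial\mathcal{F}|$ while preserving $|\mathcal{F}|$. Both are classical facts: the shadow of the shift is contained in the shift of the shadow, and matchings in the shifted family pull back.

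For a shifted family with $\nu(\mathcal{F})=s$, I would use the following construction. For each $F\in\mathcal{F}$, map $F$ to the $(k-1)$-set $F\setminus\{\max F\}$, which lies in $\partial\mathcal{F}$. It then suffices to show that each $G\in\partial\mathcal{F}$ has at most $s$ preimages, i.e.\ at most $s$ elements $x>\max G$ with $G\cup\{x\}\in\mathcal{F}$.

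The main obstacle is exactly this fiber bound, and it is not obviously true in this naive form. Indeed, if $G\cup\{x_1\},\dots,G\cup\{x_{s+1}\}\in\mathcal{F}$, shiftedness only yields edges that all share $G$, not a matching. So I would instead use Frankl's actual argument, an averaging over a cyclic or random ordering.

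Take a uniformly random permutation and consider the $\lfloor n/k\rfloor$ consecutive blocks of a random partition into $k$-sets plus leftovers. Alternatively, and more cleanly, use the following counting. Choose a random $(k-1)$-set $G$ and a random partition of $[n]\setminus G$ into parts. Double count pairs $(F,G)$ with $G\subset F$: each $F$ yields $k$ such pairs, and each $G\in\partial\mathcal{F}$ extends to $\deg(G)$ sets. One needs $\sum_{G}\deg(G)=k|\mathcal{F}|\le k\,s\,|\partial\mathcal{F}|$, which requires the average degree of shadow sets to be at most $s$. This fails pointwise, so the real proof must pick one designated $(k-1)$-subset per edge.

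My committed plan is as follows. Order the elements by shiftedness and assign to $F$ the set $F\setminus\{\min F\}$. For a fixed $G$, the fiber is $\{y<\min G: G\cup\{y\}\in\mathcal{F}\}$. If it had $s+1$ elements $y_1<\dots<y_{s+1}$, shiftedness gives edges $\{y_i\}\cup G_i$, where the $G_i$ are disjoint $(k-1)$-sets chosen among elements below $\max G$ but larger than the $y$'s. Finding $s+1$ such disjoint $G_i$ requires room in $[n]$, and this is the step to verify carefully. The case where room is lacking is where I would invoke induction on $n$ by deleting the element $n$: we have $|\mathcal{F}(\bar n)|\le s|\partial\mathcal{F}(\bar n)|$, and the link at $n$ contributes at most $s|\{G: G\cup\{n\}\in\mathcal F\}|$. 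Those $G$ are shadow sets not counted before, because of the fiber assignment by maximum element.
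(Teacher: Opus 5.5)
The paper quotes this lemma from Frankl without proof, so I am comparing your proposal to Frankl's standard argument. Your proposal has genuine gaps and does not close.

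\textbf{The fiber bound is false.} No rule $F\mapsto F\setminus\{\max F\}$ or $F\mapsto F\setminus\{\min F\}$ can have all fibers of size at most $s$, however much room there is in $[n]$. Take the shifted family $\mathcal{F}=\binom{[sk+k-1]}{k}$, placed inside any $[n]$. It has $\nu(\mathcal{F})=s$ and attains equality $|\mathcal{F}|=s|\partial\mathcal{F}|$. Yet under the min-rule the fiber of $G=\{sk+1,\dots,sk+k-1\}$ is all of $[sk]$, and under the max-rule the fiber of $G=[k-1]$ also has size $sk$. So the step ``find $s+1$ disjoint $G_i$'' cannot be verified, and room in $[n]$ has nothing to do with it.

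\textbf{The inductive accounting is wrong.} Let $\mathcal{F}$ be shifted with $n>k$. Every $G$ with $G\cup\{n\}\in\mathcal{F}$ satisfies $G\cup\{x\}\in\mathcal{F}(\bar n)$ for any $x\in[n-1]\setminus G$. Hence $\mathcal{F}(n)\subseteq\partial\mathcal{F}(\bar n)$: these sets are already counted, and bounding the link by $s|\mathcal{F}(n)|$ gains nothing. The genuinely new shadow sets are those containing $n$, namely $H\cup\{n\}$ with $H\in\partial\mathcal{F}(n)$. What you need is therefore $|\mathcal{F}(n)|\le s|\partial\mathcal{F}(n)|$. This is a statement about a $(k-1)$-uniform family, and it requires $\nu(\mathcal{F}(n))\le s$.

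\textbf{How the argument closes.} You have the roles of the two cases reversed: the no-room case is settled by direct double counting, and induction is used only when there is room. Fix $s$ and prove that $\nu(\mathcal{F})\le s$ implies $|\mathcal{F}|\le s|\partial\mathcal{F}|$, by induction on $k$ and $n$; the case $k=1$ is trivial.

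If $n\le k(s+1)-1$, double count. Each $F\in\mathcal{F}$ contains $k$ members of $\partial\mathcal{F}$, and each $(k-1)$-set lies in at most $n-k+1\le ks$ members of $\mathcal{F}$. Hence $k|\mathcal{F}|\le ks|\partial\mathcal{F}|$. This is your own double count; you misstated the requirement as average shadow degree at most $s$, but $ks$ is what is needed.

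If $n\ge k(s+1)$, shift. Then $\nu(\mathcal{F}(n))\le s$. Indeed, disjoint $G_1,\dots,G_{s+1}\in\mathcal{F}(n)$ leave at least $n-(k-1)(s+1)\ge s+1$ elements of $[n]$ uncovered. Choosing distinct such $x_i$ gives, by stability, a matching $\{G_i\cup\{x_i\}\}$ of size $s+1$ in $\mathcal{F}$, a contradiction. Apply induction on $n$ to $\mathcal{F}(\bar n)$ and induction on $k$ to $\mathcal{F}(n)$:
\[
|\mathcal{F}|\le s|\partial\mathcal{F}(\bar n)|+s|\partial\mathcal{F}(n)|\le s|\partial\mathcal{F}|.
\]
The last step holds because $\partial\mathcal{F}(\bar n)$ and $\{H\cup\{n\}:H\in\partial\mathcal{F}(n)\}$ are disjoint subfamilies of $\partial\mathcal{F}$.

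The case split is essential. For $\mathcal{F}=\binom{[sk+k-1]}{k}$ with $n=sk+k-1$ one has $|\mathcal{F}(n)|/|\partial\mathcal{F}(n)|=sk/(k-1)>s$. So the inductive step fails precisely in the no-room range, where you proposed to use induction.
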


\begin{lem}\label{convex}
	Let $m,k,s$ be integers with $k\geq 3$, $s\geq1$ and $m\geq ks+k-2$. Suppose that
\begin{displaymath}
	f(x)=\max\left\{\binom{m}{k-1}-\binom{m-\frac{(1-a)s}{1-x}}{k-1},\binom{\frac{(k-1)(1-a)s}{1-x}+k-2}{k-1}\right\},
\end{displaymath}
with parameter $a\in[0,1)$ and variable $x\in[0,1)$. Then $f(x)$ is convex on $[0,(a+1)/2]$.
\end{lem}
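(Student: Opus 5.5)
The plan is to use the fact that the pointwise maximum of two convex functions is convex, and to show separately that each of the two expressions inside the maximum is convex in $x$ on $[0,(a+1)/2]$. Throughout, both binomial coefficients are read as the polynomials $\binom{t}{k-1}=t(t-1)\cdots(t-k+2)/(k-1)!$. Set
\[
y(x)=\frac{(1-a)s}{1-x}.
\]
On $[0,(a+1)/2]$ we have $1-x\ge (1-a)/2>0$, so $0<y\le 2s$. Moreover $y$ is smooth, positive, increasing and convex, with
\[
y'=\frac{y}{1-x},\qquad y''=\frac{2y}{(1-x)^2}.
\]

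\emph{Second term.} Let $q(z)=\binom{z+k-2}{k-1}$ and $z=(k-1)y(x)\ge 0$. The polynomial $q$ has all its roots at $z=-(k-2),\dots,0$, which are real and at most $0$. By Rolle's theorem the roots of $q'$ and $q''$ also lie in $[-(k-2),0]$. Since the leading coefficient is positive, $q$, $q'$ and $q''$ are all nonnegative for $z\ge 0$. Hence $q$ is increasing and convex on $[0,\infty)$. A convex increasing function composed with a convex function is convex, so $x\mapsto q((k-1)y(x))$ is convex.

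\emph{First term.} Let $p(t)=\binom{t}{k-1}$ and $g(x)=\binom{m}{k-1}-p(m-y(x))$. Here the composition argument fails, because $y\mapsto -p(m-y)$ is concave. So I would differentiate directly. With $t=m-y$,
\[
g''(x)=p'(t)\,y''-p''(t)\,(y')^2=\frac{y}{(1-x)^2}\bigl(2p'(t)-y\,p''(t)\bigr).
\]
It therefore suffices to show $y\,p''(t)\le 2p'(t)$.

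Since $y\le 2s$ and $k\ge 3$, we get $t=m-y\ge ks+k-2-2s>k-2$. The roots $0,\dots,k-2$ of $p$ are real, so by Rolle the $k-2$ roots $r_1,\dots,r_{k-2}$ of $p'$ are real and lie in $(0,k-2)$. Thus $p'(t)>0$, and
\[
\frac{p''(t)}{p'(t)}=\sum_{j=1}^{k-2}\frac{1}{t-r_j}\le\frac{k-2}{t-k+2}.
\]
So it is enough to check $(k-2)y\le 2(m-y-k+2)$, that is, $ky\le 2(m-k+2)$. Using $y\le 2s$, this reduces to $ks\le m-k+2$, which is exactly the hypothesis $m\ge ks+k-2$. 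Hence $g''\ge 0$ on the interval.

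The main obstacle is the first term, where a naive convexity argument breaks down. The step to get right is the root-location bound on $p''/p'$. Bounding via $p'/p$ instead would lose a factor and require $m\ge (k+1)s+k-2$. The endpoint $(a+1)/2$ is precisely what gives $y\le 2s$, which makes the hypothesis on $m$ exactly sufficient. Combining the two parts, $f$ is a maximum of two convex functions and so is convex on $[0,(a+1)/2]$.
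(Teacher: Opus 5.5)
Your proof is correct. Its skeleton is the paper's: you write $f$ as the maximum of the two branches, show each branch is convex on $[0,(a+1)/2]$ from the sign of its second derivative, and use $y\le 2s$, which comes from the endpoint $(a+1)/2$. The two arguments differ only in how the two sign checks are carried out.

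For the first branch, the paper takes logarithmic derivatives of $\binom{m-y}{k-1}$. With $t=m-y$, it writes $f_1''$ as a negative multiple of $\sum_i (t-i)^{-1}\sum_{j\ne i}h_j$, where $h_j=\frac{m-j}{m-j-y}-\frac{k}{k-2}$, and then checks $h_j\le 0$ term by term. Unwound, this is exactly your inequality $y\,p''(t)\le 2p'(t)$. The paper expresses $p''/p'$ as a weighted average, with weights $1/(t-i)$, of the sums $\sum_{j\ne i}\frac{1}{t-j}$, and each of these is at most $\frac{k-2}{t-k+2}$. That is the same bound you get from the roots of $p'$ lying in $(0,k-2)$. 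In both arguments, this bound at $y=2s$ requires exactly $m\ge ks+k-2$. Your Rolle-type step is shorter and shows why $\frac{k-2}{t-k+2}$ is the natural bound. The paper's expansion is fully explicit and uses no facts about real-rooted polynomials.

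For the second branch, the paper shows that $\ln f_2$ has positive first and second derivatives, i.e.\ log-convexity. You instead observe that an increasing convex polynomial composed with the convex function $(k-1)y(x)$ is convex. This spares you computing $g_2''$.

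Your side remark is also right: bounding $p''/p'$ by $p'/p$ would only give the weaker condition $m\ge (k+1)s+k-2$.
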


\begin{lem}\label{Prebound}
	Let $G$ be a $3$-graph on $[m]$, $\delta=10^{-10}$ and $\mu\leq 1$ be a positive real number. If $G$ has a fractional vertex cover $\omega: [m]\to[0,1]$ with the following properties :
	\begin{itemize}
		\item $3/5\geq\omega(1)\geq\omega(2)\geq\dots\geq\omega(m)$.
		\item $\sum_{i=1}^{m}\omega(i)\leq\mu s$, $s\leq (m-2)/(4-\delta)$.
		\item $\omega(3s)=\omega(3s+1)=\dots=\omega(m)=0$.
	\end{itemize}
	Then $e(G)\leq\max\{\binom{3s-1}{3}-\binom{3s-1-\mu s}{3},\binom{3\mu s+2}{3}\}+\binom{2\mu s}{2}\binom{m-3s+1}{1}$.
	Moreover, if we further assume that $\omega(1)<1/2$, then $e(G)\leq\max\{\binom{3s-1}{3}-\binom{3s-1-\mu s}{3},\binom{3\mu s+2}{3}\}$.
\end{lem}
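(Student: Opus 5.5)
Set $T=\{1,\dots,3s-1\}$ and split $E(G)=E_0\cup E_1$, where $E_0=E(G)\cap\binom{T}{3}$ consists of the edges inside $T$ and $E_1$ of the edges meeting $[m]\setminus T$. By the third hypothesis $\omega$ vanishes outside $[3s-1]$, so every edge of $E_1$ has at most two vertices in the support of $\omega$.

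\textbf{Edges with a vertex outside $T$.} Let $e\in E_1$. Since $e$ is covered, $\sum_{v\in e}\omega(v)\ge 1$, and at most two vertices of $e$ have positive weight, each of weight at most $3/5$. Hence $e$ has exactly two vertices $u,v\in T$ with $\omega(u)+\omega(v)\ge 1$, and its third vertex lies outside $T$. If $\omega(1)<1/2$, then no pair has weight sum at least $1$, so $E_1=\emptyset$; this gives the "moreover" part once $E_0$ is handled. In general, put $P=\{\{u,v\}\subseteq T:\omega(u)+\omega(v)\ge 1\}$. Then $|E_1|\le |P|\,(m-3s+1)$, and it remains to show $|P|\le\binom{2\mu s}{2}$. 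I would do this by a threshold argument. Every vertex in a pair of $P$ has weight at least $1-3/5=2/5$. If $t$ vertices have weight at least $1/2$, all pairs among them lie in $P$, and counting total weight gives $t\le 2\mu s$. The remaining pairs pair a heavy vertex (weight above $1/2$) with a lighter one. Since $\omega$ is nonincreasing, $P$ is a shifted (stable) family: if $\{i,j\}\in P$ then so is every pair with entrywise smaller indices. Every shifted graph is contained in a union of a clique and a "star-like" threshold structure, so I would bound $|P|$ by $\max_t\bigl\{\binom{t}{2}+t\cdot\#\{\text{partners}\}\bigr\}$ under the weight constraint $\sum\omega\le\mu s$. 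A cleaner route is to observe that $P$ has fractional cover number at most $\mu s$ via $\omega$ itself. By the Erd\H{o}s--Gallai-type bound for graphs with $\tau^*\le \mu s$ on $3s-1$ vertices, $|P|\le\max\{\binom{2\mu s}{2}+\text{small},\ \mu s(3s-1)\}$. This is where the cap $\omega\le 3/5$ must be used: each partner of vertex $u$ has weight at least $1-\omega(u)\ge 2/5$, so the number of vertices involved in $P$ is at most $\mu s/(2/5)$. This limits how star-like $P$ can be and should push the bound down to $\binom{2\mu s}{2}$. I expect this step to be the main obstacle and would check the extremal configurations (all weights $1/2$ versus a few weights $3/5$) by hand.

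\textbf{Edges inside $T$.} The 3-graph $E_0$ lives on $3s-1$ vertices and has fractional cover $\omega$ restricted to $T$ of size at most $\mu s$, so $\nu(E_0)\le\nu^*(E_0)\le\mu s$. Apply Frankl's resolution of the EMC for $k=3$ with $n'=3s-1$ and matching number $\lfloor\mu s\rfloor$. This gives $|E_0|\le\max\{\binom{3s-1}{3}-\binom{3s-1-\mu s}{3},\binom{3\mu s+2}{3}\}$. When $\mu s$ is not an integer, the monotonicity of both terms in $\mu s$ handles it. The condition $s\le (m-2)/(4-\delta)$ only guarantees that $m-3s+1\ge0$ and that the index ranges make sense.

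Adding the bounds for $E_0$ and $E_1$ gives the stated inequality, together with its refinement when $\omega(1)<1/2$.
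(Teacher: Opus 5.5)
Your decomposition is the paper's. Edges inside $[3s-1]$ are bounded by Frankl's $k=3$ EMC via $\nu\le\nu^*\le\mu s$. Every edge meeting $[m]\setminus[3s-1]$ contains a pair of $P$, so there are at most $|P|(m-3s+1)$ of them. And $P=\emptyset$ when $\omega(1)<1/2$, which gives the ``moreover'' part.

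\textbf{The gap.} The gap is the step you flag yourself: you never prove $|P|\le\binom{2\mu s}{2}$. The route you call cleaner, an Erd\H{o}s--Gallai-type bound on the $3s-1$ vertices of $T$, cannot deliver it. Its star term $\mu s(3s-1)$ exceeds $\binom{2\mu s}{2}=\mu s(2\mu s-1)$ by a factor $(3s-1)/(2\mu s-1)>3/2$, since $\mu\le 1$. Saying the cap ``should push the bound down'' and that you would check configurations by hand leaves the inequality unproved.

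\textbf{How the paper closes it.} Apply your own observation to shrink the ground set \emph{before} invoking Erd\H{o}s--Gallai.
Both vertices of a pair in $P$ have weight at least $1-3/5=2/5$. Hence $P\subseteq\binom{[\ell]}{2}$ with $\ell=\max\{i:\omega(i)\ge 2/5\}$, and $\frac25\ell\le\sum_i\omega(i)\le\mu s$ gives $\ell\le\frac52\mu s$.
The graph $([\ell],P)$ has the fractional cover $\omega|_{[\ell]}$ of size at most $\mu s$, so its matching number $\nu$ is at most $\mu s$.
Erd\H{o}s--Gallai on only $\ell$ vertices now bounds the star term by $\binom{\nu}{2}+\nu(\ell-\nu)\le 2(\mu s)^2$. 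This is of the same order as the clique term, so no case analysis of extremal configurations is needed. This is exactly how the paper concludes.

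\textbf{A caveat on the exact constant.} The clique term in Erd\H{o}s--Gallai is $\binom{2\nu+1}{2}$, so this argument actually gives only $|P|\le\binom{2\mu s+1}{2}=\binom{2\mu s}{2}+2\mu s$. The paper passes over this. The extra $O(\mu s\, m)$ edges are harmless for the leading-order computations in which the lemma is used. Getting exactly $\binom{2\mu s}{2}$ requires a finer weight count that uses the cap $\omega\le 3/5$ a second time, comparing the vertices of weight at least $1/2$ with the lighter vertices joined to them.
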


\begin{lem}\label{Maxvalue}
	Let $m$ be a positive integer and $\varepsilon<10^{-20}$ be a positive real number. Let $a,b$ be real numbers satisfying $1/4\leq b\leq a<1-5\varepsilon^{1/4}$. Set $\delta=10^{-10}$, $\mu=(1-a)/(1-b)$ and $\beta=1-\delta+3a-(4-\delta)b$. Suppose that
\begin{displaymath}
	f(s)=\left\{\begin{array}{ll}
		\binom{m}{3}-\binom{m-\mu s}{3}, & \textrm{ for } b\geq 3/8\\[0.3em]
		\max\{\binom{3s-1}{3}-\binom{3s-1-\mu s}{3},\binom{3\mu s+2}{3}\}+\binom{2\mu s}{2}\binom{m-3s+1}{1}, &  \textrm{ for } 1/3\leq b< 3/8\\[0.3em]
		\max\{\binom{3s-1}{3}-\binom{3s-1-\mu s}{3},\binom{3\mu s+2}{3}\}, & \textrm{ for } 1/4\leq b< 1/3 \textrm{ },
	\end{array}
	\right. 
\end{displaymath}
	and that
\begin{displaymath}
	g(s)=(1-a)s f(s)-\frac{(1-\delta)(1-a)s}{\beta}\binom{m-\mu s}{3}.
\end{displaymath}
	Then for sufficiently large $m$, all $0\leq p\leq q\leq\frac{m-2}{4-\delta}$ and all $s\in[p,q]$, we have $g(s)\leq \max\{g(p),g(q)\}$.
\end{lem}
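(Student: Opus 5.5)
The plan is to prove that $g$, viewed as a function of a real variable $s$ on $[0,\frac{m-2}{4-\delta}]$, is convex in each of the three regimes for $b$. A convex function on $[p,q]$ attains its maximum at an endpoint, which gives $g(s)\le\max\{g(p),g(q)\}$. In each regime the quantities $a,b,\mu,\beta,\delta$ are fixed, so $g$ is a polynomial in $s$, or a maximum of two polynomials. A maximum of convex functions is convex, so in the second and third regimes it suffices to check convexity of each branch separately. Throughout I replace binomial coefficients by their polynomial extensions $\binom{x}{3}=x(x-1)(x-2)/6$. For the range of $s$ I use $\mu=(1-a)/(1-b)\le 1$ (since $a\ge b$) and $s\le (m-2)/(4-\delta)$. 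The lower-order terms are absorbed by taking $m$ large. Note that $\beta=1-\delta+3a-(4-\delta)b\ge 1-\delta-(1-\delta)b>0$ for $b\le a<1$, so the constant $C:=(1-\delta)/\beta$ is positive.

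\emph{Case $b\ge 3/8$.} Here
\[
g(s)=(1-a)\Bigl[s\tbinom{m}{3}-(1+C)\,s\tbinom{m-\mu s}{3}\Bigr].
\]
The first term is linear in $s$. For the second, put $h(s)=s\binom{m-\mu s}{3}$. Its leading part is $s(m-\mu s)^3/6$, whose second derivative is $-\mu(m-\mu s)(m-2\mu s)$. Since $\mu\le1$ and $s\le m/(4-\delta)$, we have $m-2\mu s\ge m/2$, so $h''\le -\Omega(m^2)$ uniformly on the range. The error terms from the exact binomial contribute only $O(m)$ to $h''$, so $h$ is concave for large $m$. Hence $g$ is linear minus a positive multiple of a concave function, and therefore convex.

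\emph{Case $1/4\le b<1/3$.} The same term $-(1-a)C\,s\binom{m-\mu s}{3}$ is convex, by the computation above. It remains to show that each branch $s\binom{3s-1}{3}-s\binom{3s-1-\mu s}{3}$ and $s\binom{3\mu s+2}{3}$ is convex in $s$ for $s\ge 0$, up to lower-order terms. The second branch is a quartic with nonnegative coefficients in its leading part, so it is clearly convex. The first has leading part $\frac{s^4}{6}\bigl(27-(3-\mu)^3\bigr)$, which is convex since $\mu>0$. Its lower-order terms are cubic in $s$ with bounded coefficients. For $s\ge cm$ (the only range that matters after handling small $s$ trivially, or by noting that $g$ is increasing near $0$), these are dominated by the leading curvature $\Theta(\mu s^2)$, given $\mu\ge (1-a)\ge 5\varepsilon^{1/4}$.

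\emph{Case $1/3\le b<3/8$: the main obstacle.} Here the additional term
\[
(1-a)\,s\tbinom{2\mu s}{2}(m-3s+1)\approx 2(1-a)\mu^2 s^3(m-3s)
\]
has second derivative $\approx 12(1-a)\mu^2 s(m-6s)$. This is negative once $s>m/6$, and that part of the range lies inside $[0,m/(4-\delta)]$. So branchwise convexity fails and must be compensated. I would first sum the second derivatives of all terms of this branch. These are the convex term $(1-a)C\mu(m-\mu s)(m-2\mu s)s'$-type contribution (more precisely $(1-a)C\mu(m-\mu s)(m-2\mu s)$), the convex quartic from the $\binom{3s-1}{3}$-branch, and the possibly negative term above. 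Using $b\in[1/3,3/8)$, which gives an explicit lower bound on $C=(1-\delta)/\beta$, together with $\mu\le1$ and $s\le m/(4-\delta)$, I would check that the sum is positive. After scaling $s=tm$, this reduces to a polynomial inequality in $t\in[0,1/(4-\delta)]$ and $a,b$.

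If this positivity fails in some corner of the parameter range, the fallback is to prove only quasi-convexity. That means showing $g'$ has no sign change from $+$ to $-$ inside $(p,q)$, that is, that any critical point of $g$ is a local minimum. This is done by checking $g''>0$ only at points where $g'=0$, using the relation $g'=0$ to eliminate one of the terms.
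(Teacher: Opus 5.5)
Your strategy is the paper's. You distribute $(1-a)s$ over the maximum, show each branch is convex on $[0,\frac{m-2}{4-\delta}]$, and decide this by the sign of the $m^2$-coefficient of $\frac{3\beta}{(1-a)\mu}g''$ after substituting $s=\alpha m$, $\alpha\in[0,\frac1{4-\delta}]$. Your case $b\ge 3/8$ is fine; that coefficient is $3(\beta+1-\delta)(1-\mu\alpha)(1-2\mu\alpha)>0$.

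\textbf{Middle case not carried out.} For $1/3\le b<3/8$, the only nontrivial case, you say you ``would check'' positivity, hedge with a quasi-convexity fallback, and mention only the branch containing $\binom{3s-1}{3}$. Carrying out your plan gives, for the two branches,
\[
6\bigl(27\beta-45\beta\mu+(\beta+1-\delta)\mu^2\bigr)\alpha^2+9(4\beta-1+\delta)\mu\alpha+3(1-\delta),
\]
\[
6\bigl(-36\beta\mu+(27\beta+1-\delta)\mu^2\bigr)\alpha^2+9(4\beta-1+\delta)\mu\alpha+3(1-\delta),
\]
the second belonging to the branch with $\binom{3\mu s+2}{3}$. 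The paper closes this as follows. From $b<3/8$ one gets $\beta=(4-\delta)(1-b)-3(1-a)\ge(1-\delta)(1-b)>\frac58(1-\delta)$, so both linear coefficients are positive. Hence each quadratic attains its minimum over $[0,\frac1{4-\delta}]$ at an endpoint. At $\alpha=0$ the value is $3(1-\delta)$, and the right endpoint is checked directly.

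The compensation you anticipated is genuinely needed only in the second branch. Take $\alpha=\frac14$ (the $\delta$-shift of the endpoint is harmless). There the part of the second coefficient not coming from $\binom{m-\mu s}{3}$ equals $\beta\mu(\frac{81}{8}\mu-\frac92)$, which is negative for $\mu<4/9$. It is beaten by the contribution $(1-\delta)(\frac38\mu^2-\frac94\mu+3)>2(1-\delta)$ of that term, because $\mu(\frac{81}{8}\mu-\frac92)\ge-\frac12$ and $\beta\le(4-\delta)(1-b)<3$.

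\textbf{Small $s$ when $1/4\le b<1/3$.} Your handling of small $s$ in this case is wrong as written, for three reasons.
\begin{itemize}
\item The branch $s\binom{3s-1}{3}-s\binom{3s-1-\mu s}{3}$ is not convex by itself. Its second derivative is exactly $\frac{\mu}{6}\bigl(12(27-9\mu+\mu^2)s^2-36(6-\mu)s+22\bigr)$, which is negative on a bounded interval (for $\mu=1$, roughly $0.15<s<0.64$).
\item $g$ is not increasing near $0$. Since $f(0)=0$, we get $g'(0)=-\frac{(1-\delta)(1-a)}{\beta}\binom{m}{3}<0$.
\item You may not restrict to $s\ge cm$, because the lemma quantifies over all $0\le p\le q$.
\end{itemize}
The remedy, which the paper's computation uses implicitly, is never to split off the last term. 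The term $-\frac{(1-\delta)(1-a)s}{\beta}\binom{m-\mu s}{3}$ contributes $\frac{(1-\delta)(1-a)\mu}{\beta}(m-\mu s)(m-2\mu s)-O(m)\ge c'm^2$ to $g''$ on the whole range. The branch's negative lower-order curvature is only $O(m)$, so the sum stays positive. In the normalized coefficient this shows up as the constant term $3(1-\delta)$, which is what makes every case positive at $\alpha=0$.
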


\begin{lem}\label{Calculate}
	Let $\varepsilon<10^{-20}$ be a positive real number and $a,b$ be real numbers satisfying $1/4\leq b\leq a<1-5\varepsilon^{1/4}$. Set $\delta=10^{-10}$, $\mu=(1-a)/(1-b)$ and $\beta=1-\delta+3a-(4-\delta)b$. Let 
\begin{displaymath}
g(a,b):=f(a,b)-\frac{1-\delta}{\beta}\left(1-\frac{\mu}{4-\delta}\right)^{3}, \mbox{ where }
\end{displaymath}
\begin{displaymath}
f(a,b)=\left\{
\begin{array}{ll}
    1-(1-\frac{\mu}{4-\delta})^{3}, & b\geq 3/8\\[0.3em]
    (4-\delta)^{-3}\cdot \left(\max\{27-(3-\mu)^{3},27\mu^{3}\}+3(1-\delta)(4-\delta)\mu^{2}\right), & 1/3\leq b <3/8\\[0.3em]
    (4-\delta)^{-3}\cdot \max\{27-(3-\mu)^{3},27\mu^{3}\}, & 1/4\leq b<1/3 \textrm{ }.
\end{array}\right.
\end{displaymath}
Then $g(a,b)<-52\varepsilon^{3/4}$. Furthermore, for $b<3/8$, we have $g(a,b)<-\frac{1}{8100}$.
\end{lem}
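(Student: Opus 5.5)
The plan is to remove the two-parameter dependence by a change of variables, after which each of the three cases of $f$ becomes an inequality in essentially one variable that can be checked by monotonicity and endpoint evaluation. Put $y=1-a$, $z=1-b$, so that $5\varepsilon^{1/4}<y\le z\le 3/4$ and $\mu=y/z\in(0,1]$. Also put $t=\mu/(4-\delta)\in(0,1/(4-\delta)]$. A direct computation gives
\[
\beta=(4-\delta)z-3y=z\bigl((4-\delta)-3\mu\bigr)=z(4-\delta)(1-3t).
\]
Since $1-3t\ge (1-\delta)/(4-\delta)>0$, $\beta$ is positive, and
\[
\frac{1-\delta}{\beta}\Bigl(1-\frac{\mu}{4-\delta}\Bigr)^{3}=\frac{(1-\delta)(1-t)^{3}}{(4-\delta)\,z\,(1-3t)} .
\]
Thus $g$ depends on $(a,b)$ only through $\mu$ (equivalently $t$) and $z$. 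For fixed $\mu$, the first term $f$ does not depend on $z$ except through which case applies, and the subtracted term is decreasing in $z$. So within each case it suffices to take $z$ as large as allowed: $z=5/8$ when $b\ge 3/8$, $z=2/3$ when $1/3\le b<3/8$, and $z=3/4$ when $1/4\le b<1/3$. Each case then reduces to a one-variable inequality in $\mu\in(0,1]$.

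\textbf{Case $b\ge 3/8$.} Here we need
\[
1-(1-t)^{3}<\frac{8(1-\delta)}{5(4-\delta)}\cdot\frac{(1-t)^{3}}{1-3t}-52\varepsilon^{3/4}.
\]
Multiplying by $1-3t>0$ gives a cubic inequality in $t$. I would check it on $[0,1/(4-\delta)]$ by comparing values and derivatives. At $t=0$ the left side is $0$ and the right side is about $0.4$. At $t=1/(4-\delta)$ the left side is about $0.578$ and the right side is about $0.67$. The gap $0.1$ is an absolute constant, so the $52\varepsilon^{3/4}$ term (with $\varepsilon<10^{-20}$) is harmless. The claimed bound $-52\varepsilon^{3/4}$ is weak, and I expect a positive absolute margin throughout this case.

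\textbf{Cases $b<3/8$.} Here $f$ is a maximum of two explicit polynomials in $\mu$, plus the quadratic term $3(1-\delta)(4-\delta)\mu^{2}$ in the middle case. I would split according to which branch of $\max\{27-(3-\mu)^{3},27\mu^{3}\}$ is active; the two branches cross at a computable $\mu_0\in(0,1)$. On each piece I would verify
\[
(4-\delta)^{-3}(\cdots)\le \frac{(1-\delta)(1-t)^{3}}{(4-\delta)z(1-3t)}-\frac{1}{8100},\qquad z\in\{2/3,\,3/4\}.
\]
Again this becomes a polynomial inequality after clearing the factor $1-3t$. It can be settled by bounding the polynomial's derivative and checking a modest grid of values of $\mu$, or by a sign analysis on each interval. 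The constant $1/8100$ is exactly the slack to extract at the worst point.

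\textbf{Main obstacle.} I expect the hardest step to be the middle case $1/3\le b<3/8$ near $\mu=1$. There both branches of $f$ are large and the extra $\mu^{2}$ term is at its maximum, so the margin is smallest and likely determines the constant $1/8100$. The first thing I would do is evaluate $g$ at $\mu=1$, $z=2/3$. I would then confirm that $g$ is monotone in $\mu$ on the last interval by checking the sign of its derivative, so that this endpoint is really the extremum. If monotonicity fails, I would subdivide the interval and use a Lipschitz bound on $g$ with respect to $\mu$.
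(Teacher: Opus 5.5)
\paragraph{Comparison with the paper.} Your proof is correct, but it is organized differently from the paper's. The paper substitutes $x=1-b$, $y=1-a$, $z=\varepsilon^{1/4}$ and clears the denominators $\beta$ and $(4-\delta)^3x^3$. It then checks the resulting polynomial inequalities in $(x,y,z)$, piecewise in $x$, with WolframAlpha, and does no further reduction. You instead use the identity $\beta=(4-\delta)z(1-3t)$, where your $z$ is $1-b$. This shows that for fixed $\mu$ we have $g=f(\mu)-C(\mu)/z$ with $C(\mu)>0$. Hence $g$ is increasing in $z$, and each case collapses to a one-variable inequality at $z=5/8$, $2/3$ or $3/4$.

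Your route gives a proof that can be checked by hand. It also shows that both stated bounds are very loose. Indeed $\sup g\approx -3/128$, attained at $\mu=1$, $z=2/3$ (that is, $a=b=1/3$). So the slack at your ``worst point'' is about $0.023$, not ``exactly $1/8100$''. The paper's route is shorter to write down but rests on a black-box computation.

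\paragraph{Fixes needed in the case $b\ge 3/8$.} Two numerical points need correcting, though neither breaks the argument.
\begin{enumerate}
\item After you multiply through by $1-3t$, the inequality is quartic in $t$, not cubic.
\item The endpoint values do not give the minimum of the gap. With $c=\frac{8(1-\delta)}{5(4-\delta)}$, the gap is $\phi(t)=c\frac{(1-t)^3}{1-3t}+(1-t)^3-1$, and
\[
\phi'(t)=3(1-t)^2\Bigl(\frac{2ct}{(1-3t)^2}-1\Bigr).
\]
In the range, this vanishes only near $t=1/5$, where $\phi\approx 0.024$ rather than $0.1$. So the derivative check you mention is genuinely needed, and it does succeed.
\end{enumerate}
In the middle case your plan works as stated. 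The branches of the maximum cross at $\mu_0=(\sqrt{2889}-9)/52\approx 0.86$, and the one-variable function is decreasing on $[\mu_0,1]$.
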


\medskip

In the rest of this section, we present the proof of Theorem~\ref{FStableK4}, assuming the above lemmas hold.

\subsection{Proof of Theorem~\ref{FStableK4}}
\noindent We devote this subsection to the proof of Theorem \ref{FStableK4}.
	Assume that $\varepsilon<\min\{(\frac{c}{1000})^{4}, 10^{-20}\}$. 
    Let $n$ be sufficiently large and $cn\leq s\leq (n-3)/(5-\delta)$. 
    Let $G$ be a stable $4$-graph on $[n]$ with $\nu^{*}(G)\leq s$ and $e(G)\geq \binom{n}{4}-\binom{n-s}{4}-\varepsilon n^{4}$.
    Let $\omega : [n]\to [0,1]$ be a minimum fractional vertex cover of $G$. 
    Then we have $\sum_{i=1}^{n}\omega(i)= \tau^{*}(G)=\nu^{*}(G)\leq s$.
    By the minimality of $\omega$ and since $G$ is stable, we have $\omega(1)\geq \omega(2)\geq\dots\geq\omega(n)$.
    Under this condition, we may assume that for any $e\in\binom{[n]}{4}$ with $\sum_{i\in e}\omega(i)\geq 1$, we always have $e\in E(G)$.  
    By Proposition \ref{CS}, we also have $\omega(4s+1)=\dots=\omega(n)=0.$

	For any subset $A\subseteq [s+1]$, let $\mathcal{F}(A)=\{e\setminus A: e\in E(G),e\cap[s+1]=A\}$ and $\mathcal{H}(A)=\{e\setminus A : e\in E(H_{1}), e\cap[s+1]=A\}$. Then for any subset $A\subseteq [s+1]$ with $2\leq |A|\leq 4$, we have $|\mathcal{F}(A)|\leq \binom{n-s-1}{4-|A|}=|\mathcal{H}(A)|$. Since $e(G)\geq \binom{n}{4}-\binom{n-s}{4}-\varepsilon n^{4}=e(H_{1})-\varepsilon n^{4}$, 
\begin{equation}\label{lowerbound}
	|\mathcal{F}(\emptyset)|+\sum_{i=1}^{s+1}|\mathcal{F}(\{i\})|\geq s\binom{n-s-1}{3}-\varepsilon n^{4}.
\end{equation}
	
	Let $a,b\in[0,1]$ be real numbers satisfying $\sum_{i=1}^{s}\omega(i)=as$ and $\omega(s+1)=b$. 
    If $b<1/4$, then for any $f\in\binom{[n]\setminus[s]}{4}$, we have $\sum_{i\in f}\omega(i)\leq 4b<1$, implying $f\not\in E(G)$. 
    Therefore we must have $E(G)\subseteq E(H_{1})$. 
    Since $e(G)\geq \binom{n}{4}-\binom{n-s}{4}-\varepsilon n^{4}$, 
    clearly $G$ is $\varepsilon$-close to $H_{1}$ and thus $G$ is $100\varepsilon^{1/4}$-close to $ H_{1}$. 
    If $a=1$, then we have $b=0<\frac{1}{4}$. Similarly, $G$ is $100\varepsilon^{1/4}$-close to $ H_{1}$.

	From now on, we assume that $1/4\leq b\leq a<1$ and that $G$ is not $100\varepsilon^{1/4}$-close to $H_{1}$.
    In the following proof, we aim to find a contradiction to \eqref{lowerbound}, by showing that
\begin{equation}\label{equ:uppbound}
	|\mathcal{F}(\emptyset)|+\sum_{i=1}^{s+1}|\mathcal{F}(\{i\})|< s\binom{n-s-1}{3}-\varepsilon n^{4}.
\end{equation}

	First, consider the case when $1-5\varepsilon^{1/4}\leq a<1$.
	Note that for all $1\leq i\leq s$, if $\omega(i)=1$, then $|\mathcal{F}(\{i\})|=\binom{n-s-1}{3}$. Since $s\leq(n-3)/(5-\delta)$, there exists at least $100\varepsilon^{1/4} n^{4}/\binom{n-s-1}{3}>600\varepsilon^{1/4} n$ different vertices in $[s]$ whose weight is less than $1$.
	
	Note that for all $1\leq i\leq s$, if $\omega(i)<1$, then $|\mathcal{F}(\{i\})|\leq\binom{n-s-1}{3}-\binom{n-4s}{3}$. Also note that $\nu(\mathcal{F}(\emptyset))\leq \nu^{*}(\mathcal{F}(\emptyset))\leq (1-a)s$. By Lemma \ref{Lemma 1}, we have 
\begin{displaymath}
    |\mathcal{F}(\emptyset)|\leq \nu\left(\mathcal{F}(\emptyset)\right)\cdot|\partial\mathcal{F}(\emptyset)|\leq (1-a)s \cdot\binom{n-s-1}{3}\leq \frac{5\varepsilon^{1/4}}{6(5-\delta)}n^{4}<\frac{5\varepsilon^{1/4}}{24} n^{4}.
\end{displaymath}
    Since $|\mathcal{F}(\{s+1\})|=O(n^{3})$, we have $|\mathcal{F}(\{s+1\})|\leq \varepsilon n^{4}$ for sufficiently large $n$.
	Then, for sufficiently large $n$, we have the desired inequality:
\begin{displaymath}
	\begin{array}{lll}
	|\mathcal{F}(\emptyset)|+\sum_{i=1}^{s+1}|\mathcal{F}(\{i\})| & \leq & s\binom{n-s-1}{3}-600\varepsilon^{1/4}n\binom{n-4s}{3}+\frac{5}{24}\varepsilon^{1/4}n^{4}+\varepsilon n^{4}\\[0.3em]
	& \leq & s\binom{n-s-1}{3} -\frac{100(1-\delta)^{3}}{(5-\delta)^{3}}\varepsilon^{1/4} n^{4}+\frac{5}{24}\varepsilon^{1/4}n^{4}+\varepsilon n^{4}\\[0.3em]
	& < & s\binom{n-s-1}{3} -\frac{4}{5}\varepsilon^{1/4} n^{4}+\frac{5}{24}\varepsilon^{1/4}n^{4}+\varepsilon n^{4}\\[0.3em]
	& < & s\binom{n-s-1}{3} -\frac{1}{2}\varepsilon^{1/4} n^{4} < s\binom{n-s-1}{3} -\varepsilon n^{4},
	\end{array}
\end{displaymath}
	a contradiction to \eqref{lowerbound}.

	It suffices to consider the case when $\frac{1}{4}\leq b\leq a<1-5\varepsilon^{1/4}$.
	By Lemma \ref{Lemma 1} and since $G$ is stable, we have $|\mathcal{F}(\emptyset)|\leq \nu(G\setminus[s+1])\cdot |\mathcal{F}(\{s+1\})|$.
	Note that $\nu(G\setminus[s+1])\leq \nu^{*}(G\setminus[s+1])\leq s-as$. So
	\begin{equation}\label{equ:upp1}
    |\mathcal{F}(\emptyset)|+\sum_{i=1}^{s+1}|\mathcal{F}(\{i\})|\leq \sum_{i=1}^{s+1}|\mathcal{F}(\{i\})|+(1-a)s\cdot |\mathcal{F}(\{s+1\})|.
	\end{equation}

\begin{cla}\label{claim:sum-upp}
	Let $m:=n-s-1$, $\mu:=(1-a)/(1-b)$, and $\beta:=1-\delta+3a-(4-\delta)b$. Then 
\begin{equation}\label{equ:sum-upp}
	|\mathcal{F}(\emptyset)|+\sum_{i=1}^{s+1}|\mathcal{F}(\{i\})|\leq (s+1)\binom{m}{3}+(1-a)s\cdot |\mathcal{F}(\{s+1\})|-\frac{(1-\delta)(1-a)s}{\beta}\binom{m-\mu s}{3}.
\end{equation}
\end{cla}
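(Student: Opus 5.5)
The plan is to bound each link $\mathcal{F}(\{i\})$, $i\in[s+1]$, separately using the $k=3$ case of the EMC, and then to sum these bounds using the convexity from Lemma~\ref{convex}. Since the term $(1-a)s\cdot|\mathcal{F}(\{s+1\})|$ appears on both sides of \eqref{equ:upp1} and \eqref{equ:sum-upp}, it suffices to prove the \emph{deficit inequality}
\[
\sum_{i=1}^{s+1}\left(\binom{m}{3}-|\mathcal{F}(\{i\})|\right)\ \ge\ \frac{(1-\delta)(1-a)s}{\beta}\binom{m-\mu s}{3}.
\]

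\textbf{Step 1 (bound on each link).} Fix $i\in[s+1]$ and write $x_i=\omega(i)$. Every $T\in\mathcal{F}(\{i\})$ is a $3$-subset of $[n]\setminus[s+1]$ (so $|[n]\setminus[s+1]|=m$) with $\sum_{v\in T}\omega(v)\ge 1-x_i$. Hence $\omega/(1-x_i)$, restricted to $[n]\setminus[s+1]$, is a fractional vertex cover of $\mathcal{F}(\{i\})$. Its size is at most $(s-as-b)/(1-x_i)\le (1-a)s/(1-x_i)$. Therefore
\[
\nu(\mathcal{F}(\{i\}))\le\nu^*(\mathcal{F}(\{i\}))\le \frac{(1-a)s}{1-x_i}.
\]
Frankl's complete resolution of the EMC for $k=3$ then gives $|\mathcal{F}(\{i\})|\le f(x_i)$, where $f$ is the function of Lemma~\ref{convex} with $k=4$ (so $k-1=3$). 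The hypothesis $m\ge 4s'+2$, with $s'$ the matching bound, is where $s\le (n-3)/(5-\delta)$ enters. If the matching bound is too large for this hypothesis to hold, one instead uses only the trivial bound $|\mathcal{F}(\{i\})|\le\binom m3$, which is all that is needed for large $x_i$ anyway.

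\textbf{Step 2 (chord bound).} Monotonicity of $\omega$ gives $x_i\ge b$ for all $i\le s+1$, and $\sum_{i\le s+1}x_i=as+b$. Set $t=(1+a)/2$. On $[b,t]$ the function $f$ is convex by Lemma~\ref{convex}, so $f$ lies below its chord between $b$ and $t$. Bounding $f(t)\le\binom m3$ yields, for $x_i\in[b,t]$,
\[
\binom m3-f(x_i)\ \ge\ \left(\binom m3-f(b)\right)\frac{t-x_i}{t-b}.
\]
For $x_i>t$ the right-hand side is negative while the deficit is $\ge 0$, so this linear lower bound holds for every $i$. Summing over $i\le s+1$ gives
\[
\sum_{i}\left(\binom m3-|\mathcal{F}(\{i\})|\right)\ \ge\ \left(\binom m3-f(b)\right)\frac{(s+1)t-as-b}{t-b}\ \ge\ \left(\binom m3-f(b)\right)\frac{(1-a)s}{1+a-2b}.
\]
A direct computation gives $\beta-(1-\delta)(1+a-2b)=(2+\delta)(a-b)\ge 0$, so $\frac{1}{1+a-2b}\ge\frac{1-\delta}{\beta}$.

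\textbf{Step 3 (evaluating $f(b)$) — the main obstacle.} It remains to show $\binom m3-f(b)\ge\binom{m-\mu s}{3}$, up to the slack absorbed by the factor $1-\delta$. Here $f(b)$ is evaluated at matching bound $\mu s=(1-a)s/(1-b)$, and there are two branches of the maximum. If $f(b)=\binom m3-\binom{m-\mu s}3$, the required inequality is exact. The danger is the clique branch $\binom{3\mu s+2}{3}$, which dominates when $\mu s$ is close to $m/4$. Since $m\ge (4-\delta)s-O(1)$ and $\mu\le 1$, the first branch should dominate except in a narrow window near $\mu\approx 1$, $s\approx m/(4-\delta)$.

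In that window I would try first to use the spare factor coming from $\beta>(1-\delta)(1+a-2b)$ when $a>b$, together with the $\delta$-slack in $m\ge(4-\delta)s$. If that is not enough, I would fall back to Lemma~\ref{Prebound}, which gives a sharper bound on the links of weight-$\ge b$ vertices. I would also need to double-check that the clique-branch case with $\mu=1$ (i.e.\ $a=b$) still yields $\binom m3-\binom{3s+2}3\ge(1-\delta)\binom{m-s}3$ asymptotically, using $m\ge(4-\delta)s$; this is the computation I expect to be the most delicate.
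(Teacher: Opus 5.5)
\textbf{Gap in Step 2.} Your overall route matches the paper's: the $k=3$ EMC on each link with the rescaled cover $\omega/(1-\omega(i))$, convexity of $f$ from Lemma~\ref{convex}, and the weight budget $\sum_{i\le s+1}\omega(i)=as+b$. Your chord bound is a cleaner way to run the paper's smoothing step. But Step~2 fails as written.

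At $t=(1+a)/2$ the link matching bound is $\frac{(1-a)s}{1-t}=2s$, so $f(t)\ge\binom{6s+2}{3}$. Since $m=n-s-1$ can be as small as $(4-\delta)s+2$, this gives $f(t)>\binom m3$. Because $\min\{f,\binom m3\}$ is not convex, you cannot cap $f(t)$ at $\binom m3$ and keep the chord.

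In fact your intermediate bound, with factor $\frac{(1-a)s}{1+a-2b}$, does not follow from the per-link bounds at all. Take $m=(4-\delta)s+2$ and $t':=\frac{1-\delta+3a}{4-\delta}$, so that $f(t')=\binom m3$. Put about $\frac{(a-b)s}{t'-b}$ of the weights at $t'$ and the rest at $b$. Then the bounds $\min\{f(x_i),\binom m3\}$ leave a total deficit of only $\bigl(\frac{(1-\delta)(1-a)s}{\beta}+1\bigr)\bigl(\binom m3-f(b)\bigr)$. This is smaller than your bound whenever $a>b$; for $b=1/4$, $a=1/2$ it is roughly $s/3$ versus $s/2$ times $\binom m3-f(b)$. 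The symptom was that you landed on something strictly stronger than the claim and had to weaken it via $(2+\delta)(a-b)\ge 0$.

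\textbf{The fix.} Use $t'$ as the right endpoint of the chord:
\begin{itemize}
\item $\frac{3(1-a)s}{1-t'}=(4-\delta)s\le m-2$, so $f(t')\le\binom m3$.
\item $b\le t'<(1+a)/2$, because $\beta\ge(1-\delta)(1-a)>0$, so convexity applies on $[b,t']$.
\item $t'-a=\frac{(1-\delta)(1-a)}{4-\delta}$ and $t'-b=\frac{\beta}{4-\delta}$.
\end{itemize}
With these, your chord computation gives exactly
\[
\sum_{i=1}^{s+1}\left(\binom m3-|\mathcal{F}(\{i\})|\right)\ \ge\ \left(\frac{(1-\delta)(1-a)s}{\beta}+1\right)\left(\binom m3-f(b)\right).
\]
This is the paper's bound $\lambda+1\ge\frac{(1-\delta)(1-a)s}{\beta}$, and it is where $\beta=(4-\delta)(t'-b)$ and the factor $1-\delta$ come from.

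\textbf{Step 3 is not an obstacle.} At $x=b$ the link matching bound is $\mu s\le s\le\frac{m-2}{4-\delta}$, essentially $m/4$. That is well below the crossover of the two branches of the $3$-uniform EMC bound, which is about $0.287m$. At $\mu s=m/4$ the leading terms are $\frac{37}{64}\binom m3$ for the first branch versus $\frac{27}{64}\binom m3$ for the clique branch. Hence, for large $s$ (which holds since $s\ge cn$), $f(b)=\binom m3-\binom{m-\mu s}{3}$ exactly, and $\binom m3-f(b)=\binom{m-\mu s}{3}$ with no slack needed. You do not need Lemma~\ref{Prebound} or the $(1-\delta)$ factor here.

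The clique branch matters only at the right endpoint of the chord, which is exactly what breaks the choice $t=(1+a)/2$. With these two corrections your argument becomes the paper's proof.
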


\begin{proof}[Proof of Claim \ref{claim:sum-upp}.]
Throughout this proof, we define the increasing function $f(x)$ on $[0,1)$, by letting
$$f(x):=\max\left\{\binom{m}{3}-\binom{m-\frac{(1-a)s}{1-x}}{3},\binom{\frac{3(1-a)s}{1-x}+2}{3}\right\}.$$
Since the EMC holds for 3-graphs (see Frankl \cite{F17B}) and by the fact that $\nu(\mathcal{F}(\{i\}))\leq\nu^{*}(\mathcal{F}(\{i\}))\leq \frac{(1-a)s}{1-\omega(i)}$, for every $i\in [s+1]$, we have
\begin{align*}
|\mathcal{F}(\{i\})|\leq \max\left\{\binom{m}{3}-\binom{m-\nu(\mathcal{F}(\{i\}))}{3},\binom{3\nu(\mathcal{F}(\{i\}))+2}{3}\right\}\leq f(\omega(i)),
\end{align*}
and hence $|\mathcal{F}(\{i\})|\leq \min \{f(\omega(i)),\binom{m}{3}\}$.
Note that $\omega(i)\geq \omega(s+1)=b$ for every $i\in [s]$.
Let $A:=\{i\in [s]: \omega(i)\in[b,\frac{1-\delta+3a}{4-\delta}]\}$. 
Since $s\leq (n-3)/(5-\delta)$ and $m=n-s-1$, we can derive $(4-\delta)s+2\leq m$. 
Then since $f(x)$ is increasing, for any $i\in A$, we have
$$f(\omega(i))\leq f\left(\frac{1-\delta+3a}{4-\delta}\right)=\max\left\{\binom{m}{3}-\binom{m-(4-\delta)s/3}{3}, \binom{(4-\delta)s+2}{3}\right\}\leq \binom{m}{3}.$$
Since $s\le \frac{m-2}{4-\delta}$ and $\mu=\frac{1-a}{1-b}\le 1$, a direct calculation shows that
$\binom{m}{3}-\binom{m-\mu s}{3} > \binom{3\mu s+2}{3},$ implying $$f(b)=\binom{m}{3}-\binom{m-\mu s}{3}.$$ 
By Lemma~\ref{convex}, $f(x)$ is convex on $[0,(a+1)/2]$ and thus it is convex on $[b,\frac{1-\delta+3a}{4-\delta}]$. 
Therefore, for any $b\leq x<y\leq\frac{1-\delta+3a}{4-\delta}$, by letting $\rho:=\min\{x-b,\frac{1-\delta+3a}{4-\delta}-y\}$, we have $f(x)+f(y)\leq f(x-\rho)+f(y+\rho)$. 
Repeatedly applying this operation, one can show that there exist an integer $\lambda\geq 0$ and some $z\in\left[b,\frac{1-\delta+3a}{4-\delta}\right]$ such that $ \left(|A|-\lambda-1\right)\frac{1-\delta+3a}{4-\delta}+z+\lambda b=\sum_{i\in A}\omega(i)$ and
\begin{equation*}
    \sum_{i\in A}f(\omega(i))\leq (|A|-\lambda-1)\cdot f\left(\frac{1-\delta+3a}{4-\delta}\right)+f(z)+\lambda f(b).
\end{equation*}
Note that $\min\left\{f(\omega(i)),\binom{m}{3}\right\}=f(\omega(i))$ for $i\in A$ and $\min\left\{f(\omega(i)),\binom{m}{3}\right\}\leq \binom{m}{3}$ for $i\not\in A$.
The above bounds,  together with the fact $f(z)\leq f\left(\frac{1-\delta+3a}{4-\delta}\right)\leq \binom{m}{3}$, imply that
\begin{align*}\label{step1}
\sum_{i=1}^{s+1}\min\left\{f(\omega(i)),\binom{m}{3}\right\}
&\leq (s-|A|)\binom{m}{3}+\sum_{i\in A}f(\omega(i))+f(b)\\
&\leq (s-\lambda)\binom{m}{3}+(\lambda+1)\left(\binom{m}{3}-\binom{m-\mu s}{3}\right).
\end{align*} 
As for $\lambda$, we have $(s-\lambda-1)\frac{1-\delta+3a}{4-\delta}+(\lambda+1)b\leq \sum_{i=1}^{s}\omega(i)=as$, which is equivalent to that 
$$\lambda+1\geq\frac{(1-\delta)(1-a)s}{\beta}, \mbox{ where } \beta=1-\delta+3a-(4-\delta)b.$$
Finally, the last two inequalities show that
\begin{align*}
	\sum_{i=1}^{s+1} |\mathcal{F}(\{i\})|\leq \sum_{i=1}^{s+1}\min\left\{f(\omega(i)),\binom{m}{3}\right\}
    \leq (s+1)\binom{m}{3}-\frac{(1-\delta)(1-a)s}{\beta}\binom{m-\mu s}{3}.
\end{align*}
This, together with \eqref{equ:upp1}, provides the desired inequality, proving the claim. 
\end{proof}

\begin{cla}\label{claim}
	For sufficiently large $n$, we have
\begin{displaymath}
	\binom{m}{3}+(1-a)s\cdot |\mathcal{F}(\{s+1\})|-\frac{(1-\delta)(1-a)s}{\beta}\binom{m-\mu s}{3}<-\varepsilon n^{4}.
\end{displaymath}
\end{cla}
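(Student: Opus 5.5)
The plan is to bound $|\mathcal{F}(\{s+1\})|$ by the function $f(s)$ of Lemma~\ref{Maxvalue}, then use Lemmas~\ref{Maxvalue} and \ref{Calculate} to show that the $\Theta(n^4)$-order part of the left-hand side is negative enough to absorb both $\binom{m}{3}=O(n^3)$ and $\varepsilon n^4$.

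\textbf{Step 1: a fractional cover for the link of $s+1$.} Regard $\mathcal{F}(\{s+1\})$ as a $3$-graph on $[n]\setminus[s+1]$, which has $m$ vertices. Relabel these vertices as $[m]$, keeping the order. Every $e\in\mathcal{F}(\{s+1\})$ satisfies $\sum_{i\in e}\omega(i)\ge 1-b$. Hence $\omega'(i):=\omega(i+s+1)/(1-b)$ is a fractional vertex cover of $\mathcal{F}(\{s+1\})$. Its total weight is at most $(s-as)/(1-b)=\mu s$. It is non-increasing, and $\omega'(1)\le b/(1-b)$. Since $\omega$ vanishes beyond $4s$ by Proposition~\ref{CS}, $\omega'$ vanishes from index $3s$ on.

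\textbf{Step 2: bound $|\mathcal{F}(\{s+1\})|$ by $f(s)$, case by case.}
\begin{itemize}
\item If $b\ge 3/8$: we have $\nu(\mathcal{F}(\{s+1\}))\le\mu s$. The EMC for $3$-graphs \cite{F17B} then gives the bound. The comparison $\binom{m}{3}-\binom{m-\mu s}{3}>\binom{3\mu s+2}{3}$ from the proof of Claim~\ref{claim:sum-upp} selects the first term, giving $|\mathcal{F}(\{s+1\})|\le\binom{m}{3}-\binom{m-\mu s}{3}$.
\item If $1/3\le b<3/8$: we have $\omega'(1)\le b/(1-b)<3/5$. Lemma~\ref{Prebound} applies, since $s\le (m-2)/(4-\delta)$ holds as observed in Claim~\ref{claim:sum-upp}.
\item If $1/4\le b<1/3$: we have $\omega'(1)<1/2$, so the ``moreover'' part of Lemma~\ref{Prebound} applies.
\end{itemize}
In every case $|\mathcal{F}(\{s+1\})|\le f(s)$, with $f$ exactly as in Lemma~\ref{Maxvalue}. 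So the left-hand side of the claim is at most $\binom{m}{3}+g(s)$, with $g$ as in Lemma~\ref{Maxvalue}.

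\textbf{Step 3: reduce to the endpoints.} Let $p:=cn$ and $q:=(m-2)/(4-\delta)$; note $s\le q$ is equivalent to $s\le (n-3)/(5-\delta)$. By Lemma~\ref{Maxvalue}, $g(s)\le\max\{g(p),g(q)\}$, so it suffices to show $g(p),g(q)<-2\varepsilon n^4$. Indeed, $\binom{m}{3}=O(n^3)$ is then negligible.

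At $s=q$, normalise by $s\binom{m}{3}$. Since $q/m\to 1/(4-\delta)$, we get $f(q)/\binom{m}{3}\to f(a,b)$ and $\binom{m-\mu q}{3}/\binom{m}{3}\to(1-\mu/(4-\delta))^3$, with $f(a,b)$ as in Lemma~\ref{Calculate}. Thus $g(q)=(1-a)q\binom{m}{3}\,(g(a,b)+o(1))$. Lemma~\ref{Calculate} gives $g(a,b)<-52\varepsilon^{3/4}$, and also the stronger bound $-1/8100$ when $b<3/8$. Together with $1-a>5\varepsilon^{1/4}$ and $q,m=\Theta(n)$, this yields $g(q)\le-c'\varepsilon n^4$ for an explicit constant $c'$. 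I would check that $c'>2$ using $m\ge (1-1/(5-\delta))n$.

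\textbf{Step 4: the endpoint $s=p=cn$ (expected main obstacle).} The plan is to expand $g(p)$ with $\mu p$ of order $cn$. I expect the claim to hold because the subtracted term $\frac{(1-\delta)}{\beta}\binom{m-\mu p}{3}$ is of order $\binom{m}{3}$, while $f(p)-\binom{m}{3}$ is only of order $\mu p\, m^2$. The normalising factor $(1-a)p$ then turns the resulting negative constant into order $c\,\varepsilon^{1/4}n^4$. This beats $\varepsilon n^4$ because $\varepsilon\ll c$, specifically $\varepsilon<(c/1000)^4$.

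The points I expect to be delicate are two. First, whether $(1-\delta)/\beta$ stays bounded below when $b$ is close to $a$. Second, verifying that the constants work out uniformly in $a,b$. If the direct expansion at $p$ is awkward, I would instead apply the homogeneity argument of Step~3 with $s/m$ fixed at $c/(1-c)$, reusing the computation of Lemma~\ref{Calculate} in that regime.
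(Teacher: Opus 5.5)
Your Steps 1--3 follow the paper. That covers the rescaled cover $\omega(i)/(1-b)$ of the link of $s+1$, the three-case bound via the EMC for $3$-graphs and Lemma~\ref{Prebound}, the reduction to endpoints by Lemma~\ref{Maxvalue}, and Lemma~\ref{Calculate} at $q=(m-2)/(4-\delta)$.

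\textbf{The gap is in Step~4.} With left endpoint $p=cn$, your order-of-magnitude heuristic is false in general.
\begin{itemize}
\item $c$ may be close to $1/5$ and $m$ may be close to $\frac45 n$, so $\mu p/m$ can be close to $1/4$.
\item Then $f(p)$ is not of lower order than $\binom{m}{3}$. For instance, for $b\ge 3/8$ one has $f(p)\approx\bigl(1-(3/4)^3\bigr)\binom{m}{3}$.
\item Whether $f(p)<\frac{1-\delta}{\beta}\binom{m-\mu p}{3}$ then depends on a quantitative inequality of exactly the type in Lemma~\ref{Calculate}, but at ratio $p/m$ rather than $1/(4-\delta)$. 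The paper's lemma does not cover that ratio.
\end{itemize}

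Your fallback does not rescue this. Suppose you proved that the normalized bracket is monotone in $s/m$, so that Lemma~\ref{Calculate} transfers to ratio $p/m$. Its margin is only $52\varepsilon^{3/4}$. Multiplied by the prefactor $(1-a)p\binom{m}{3}>5\varepsilon^{1/4}p\,m^3/6$, this gives about $43(p/m)\varepsilon m^4$. The target $2\varepsilon n^4$ can be as large as about $4.9\varepsilon m^4$, so this works only when $p/m\gtrsim 0.11$, and fails for small $c$.

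Your other worry is harmless: $\beta=(4-\delta)(1-b)-3(1-a)<4$, so $(1-\delta)/\beta$ is bounded below by a constant.

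\textbf{The missing idea} is that Lemma~\ref{Maxvalue} allows any left endpoint $p\le s$, so you should take it tiny rather than $cn$.
\begin{itemize}
\item The paper takes $p=\eta m$ with $\eta=1000\varepsilon^{1/4}<10^{-2}$.
\item The hypothesis $\varepsilon<(c/1000)^4$ is exactly what guarantees $\eta m<cm\le cn\le s$.
\item At this endpoint your heuristic becomes rigorous. In all three cases, $f(\eta m)\le 3\eta\binom{m}{3}$ to leading order.
\item Meanwhile $\frac{1-\delta}{\beta}\binom{m-\mu\eta m}{3}\ge\frac{(1-\delta)(1-\eta)^3}{4}\binom{m}{3}$ to leading order.
\item Hence the $m^4$-coefficient of $g(\eta m)$ is below $\frac{\eta(1-a)}{6}\cdot\bigl(-\frac15\bigr)$.
\item Since $\eta(1-a)>5000\varepsilon^{1/2}$, this is less than $-\frac{500}{3}\varepsilon^{1/2}$, which dwarfs $\varepsilon$.
\end{itemize}
Together with your Step~3 at $q$, this completes the claim.
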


\begin{proof}[Proof of Claim \ref{claim}]
	Note that $\binom{m}{3}=O(n^{3})$. So, for sufficiently large $n$, we have $\binom{m}{3}\leq \varepsilon n^{4}$. Also note that $m=n-s-1\geq \frac{(4-\delta)n-(2-\delta)}{5-\delta}>\frac{10}{13}n$. Thus, it suffices to show that for sufficiently large $m$,
\begin{displaymath}
	(1-a)s\cdot |\mathcal{F}(\{s+1\})|-\frac{(1-\delta)(1-a)s}{\beta}\binom{m-\mu s}{3}<-\frac{28561}{5000}\varepsilon m^{4}.
\end{displaymath}

	Since $\omega$ is a fractional vertex cover of $G$, there exists a fractional vertex cover $\omega^{*}: [n]\setminus[s+1]\to[0,1]$ of $\mathcal{F}(\{s+1\})$ with $\omega^{*}(i)=\omega(i)/(1-b)$ for all $s+2\leq i\leq n$. Note that $\sum_{i=s+2}^{n}\omega^{*}(i)\leq (s-as)/(1-b)=\mu s$ and $b/(1-b)\geq \omega^{*}(s+2)\geq\dots\geq\omega^{*}(n)$. So for $b<1/3$, we have $\omega^{*}(s+2)<1/2$. By Lemma \ref{Prebound}, 
	$$|\mathcal{F}(\{s+1\})|\leq \max\left\{\binom{3s-1}{3}-\binom{3s-1-\mu s}{3},\binom{3\mu s+2}{3}\right\}.$$
	Similarly, for $1/3\leq b <3/8$, we obtain $\omega^{*}(s+2)< 3/5$. Then also by Lemma \ref{Prebound}, we have
	$$|\mathcal{F}(\{s+1\})|\leq \max\left\{\binom{3s-1}{3}-\binom{3s-1-\mu s}{3},\binom{3\mu s+2}{3}\right\}+\binom{2\mu s}{2}\binom{m-3s+1}{1}.$$
	For $b\geq 3/8$, since $\mu s\leq s\leq (m-2)/(4-\delta)$, we have the following bound.
	$$|\mathcal{F}(\{s+1\})|\leq \binom{m}{3}-\binom{m-\mu s}{3}.$$
Let
\begin{displaymath}
	h_{0}(s)=\left\{\begin{array}{ll}
		\binom{m}{3}-\binom{m-\mu s}{3}, & b\geq 3/8\\[0.3em]
		\max\{\binom{3s-1}{3}-\binom{3s-1-\mu s}{3},\binom{3\mu s+2}{3}\}+\binom{2\mu s}{2}\binom{m-3s+1}{1}, & 1/3\leq b< 3/8\\[0.3em]
		\max\{\binom{3s-1}{3}-\binom{3s-1-\mu s}{3},\binom{3\mu s+2}{3}\}, & 1/4\leq b< 1/3 \textrm{ },
	\end{array}
	\right. 
\end{displaymath}
	and
\begin{displaymath}
	h(s)=(1-a)s h_{0}(s)-\frac{(1-\delta)(1-a)s}{\beta}\binom{m-\mu s}{3}.
\end{displaymath}
It is enough to show that for sufficiently large $m$, we have
\begin{displaymath}
	h(s)<-\frac{28561}{5000}\varepsilon m^{4}.
\end{displaymath}
Note that $(m-2)/(4-\delta)\geq s\geq cn\geq cm >1000\varepsilon^{1/4} m$. 
    By Lemma \ref{Maxvalue}, it suffices to prove that
\begin{displaymath}
	\max\left\{h(1000\varepsilon^{1/4}m),h\left(\frac{m-2}{4-\delta}\right)\right\}<-\frac{28561}{5000}\varepsilon m^{4}.
\end{displaymath}
	Let $p(m):=h(1000\varepsilon^{1/4}m)$ and $q(m):=h(\frac{m-2}{4-\delta})$ be two polynomials of degree $4$ with variable $m$.
    Let $C_{p},C_{q}$ be the coefficients of $m^{4}$ in $p(m)$ and $q(m)$, respectively. 
    It then suffices to prove that
\begin{displaymath}
	\max\{C_{p},C_{q}\}<-\frac{28561}{5000}\varepsilon .
\end{displaymath}

	Let $\eta=1000\varepsilon^{1/4}<10^{-2}$. By the definition of $h(s)$, we have
\begin{displaymath}
	C_{p}=\left\{\begin{array}{ll}
		\frac{\eta(1-a)}{6}\left\{[1-(1-\eta\mu)^{3}]-\frac{(1-\delta)(1-\eta\mu)^{3}}{\beta}\right\}, & b\geq 3/8\\[0.25em]
		\frac{\eta(1-a)}{6}\left\{\max\{27-(3-\mu)^{3},27\mu^{3}\}\eta^{3}+12\mu^{2}(1-3\eta)\eta^{2}-\frac{(1-\delta)(1-\eta\mu)^{3}}{\beta}\right\}, & 1/3\leq b< 3/8\\[0.25em]
		\frac{\eta(1-a)}{6}\left\{\max\{27-(3-\mu)^{3},27\mu^{3}\}\eta^{3}-\frac{(1-\delta)(1-\eta\mu)^{3}}{\beta}\right\}, & 1/4\leq b< 1/3 \textrm{ }.
	\end{array}
	\right. 
\end{displaymath}
    Note that $5\varepsilon^{1/4}<1-a\leq\mu\leq 1$, $\beta=1-\delta+3a-(4-\delta)b=(4-\delta)(1-b)-3(1-a)\in(5(1-\delta)\varepsilon^{1/4},4)$, $\delta=10^{-10}$, $\varepsilon<10^{-20}$ and $\eta=1000\varepsilon^{1/4}<10^{-2}$. We have
\begin{displaymath}
	C_{p}<\frac{\eta(1-a)}{6}\cdot \left(-\frac{1}{5}\right)<-\frac{500}{3}\varepsilon^{1/2}<-\frac{28561}{5000}\varepsilon.
\end{displaymath}
	By the definition of $h(s)$, we also have
\begin{displaymath}
	C_{q}=\left\{\begin{array}{ll}
		\frac{1-a}{6(4-\delta)}\left\{1-(1-\frac{\mu}{4-\delta})^{3}-\frac{1-\delta}{\beta}(1-\frac{\mu}{4-\delta})^{3}\right\}, & b\geq 3/8\\[0.3em]
		\frac{1-a}{6(4-\delta)}\left\{\frac{\max\{27-(3-\mu)^{3},27\mu^{3}\}+3(1-\delta)(4-\delta)\mu^{2}}{(4-\delta)^{3}}-\frac{1-\delta}{\beta}(1-\frac{\mu}{4-\delta})^{3}\right\}, & 1/3\leq b< 3/8\\[0.3em]
		\frac{1-a}{6(4-\delta)}\left\{\frac{\max\{27-(3-\mu)^{3},27\mu^{3}\}}{(4-\delta)^{3}}-\frac{1-\delta}{\beta}(1-\frac{\mu}{4-\delta})^{3}\right\}, & 1/4\leq b< 1/3 \textrm{ }.
	\end{array}
	\right. 
\end{displaymath}
	Then by Lemma \ref{Calculate}, we have
\begin{displaymath}
	C_{q}<\frac{(1-a)}{6(4-\delta)}\cdot (-52\varepsilon^{3/4})<-\frac{65}{6}\varepsilon<-\frac{28561}{5000}\varepsilon.
\end{displaymath}
Therefore, we have proved that $\max\{C_{p},C_{q}\}<-\frac{28561}{5000}\varepsilon,$
which completes the proof of Claim \ref{claim}.
\end{proof}

Now, Claims \ref{claim:sum-upp} and \ref{claim} together imply that
\begin{displaymath}
	|\mathcal{F}(\emptyset)|+\sum_{i=1}^{s+1}|\mathcal{F}(\{i\})|<s\binom{n-s-1}{3}-\varepsilon n^{4},
\end{displaymath}
	which contradicts \eqref{lowerbound}.
This final contradiction completes the proof of Theorem~\ref{FStableK4}.
\qedB

\section{Proof of Theorem~\ref{StableK4} (Stability)}\label{sec:mainpf}

\noindent This section is devoted to the proof of Theorem \ref{StableK4}.
    Throughout this section, we set $\varepsilon<\min\{(c/1000)^{4},10^{-20}\}$ and let $G$ be a stable $4$-graph on $[n]$ with $\nu(G)\leq s$ and $e(G)\geq \binom{n}{4}-\binom{n-s}{4}-\varepsilon n^{4}$. 
Suppose for a contradiction that $G$ is not $400\varepsilon^{1/4}$-close to $H_{1}$. 
We will complete the proof by finding a matching of size at least $s+1$ in $G$. 
To proceed, we first define the following $4$-graph $H$.

\begin{cons}\label{Cons1}
	Let $\eta<\varepsilon$ be a positive real number. Let $t=\lfloor(n-4s)/3-\eta n\rfloor$ and $H$ be the stable $4$-graph on $[n+t]$ such that $E(H)=\{e\in\binom{[n+t]}{4}: e\cap [t]\neq\emptyset\}\bigcup \{\{a_{1}+t,a_{2}+t,a_{3}+t,a_{4}+t\}: \{a_{1},a_{2},a_{3},a_{4}\}\in E(G)\}$.
\end{cons}

	By Construction \ref{Cons1}, we know that $e(H)\geq \binom{n+t}{4}-\binom{n-s}{4}-\varepsilon n^{4}$. Furthermore, we also have $\nu(H)=\nu(G)+t\leq (n+t)/4-3\eta n/4$. Thus, it suffices to find a matching in $H$ that covers all but at most $\sigma n$ vertices for some $\sigma<3\eta$. We will use the following classical lemma in \cite{FR85}.

\begin{lem}[Frankl and R\"{o}dl \cite{FR85}]\label{NPM}
	For every integer $k\geq 2$ and any real $\sigma>0$, there exists $\tau=\tau(k,\sigma)$ and $d_{0}=d_{0}(k,\sigma)$ such that, for every $n\geq D\geq d_{0}$ the following holds: Every $k$-graph $H$ on $n$ vertices with $(1-\tau)D<d_{H}(v)<(1+\tau)D$ for all $v\in V(H)$ and $\Delta_{2}(H):=\max_{S\subseteq\binom{V(H)}{2}}d_{H}(S)<\tau D$, contains a matching covering all but at most $\sigma n$ vertices.
\end{lem}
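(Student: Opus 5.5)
The plan is to prove Lemma~\ref{NPM} by the R\"odl nibble, i.e.\ the semi-random method. We build the matching in a bounded number $T=T(k,\sigma)$ of rounds. In each round we take a small random, nearly disjoint family of edges, keep the edges that happen to be isolated, and delete the vertices they cover. We then show that the remaining hypergraph is again almost regular with small codegrees, with a new degree parameter $D'$ that is a fixed fraction of $D$. Fix a small $\theta=\theta(k,\sigma)>0$ (the ``bite size''). After $T$ rounds the fraction of uncovered vertices is roughly $e^{-\theta T}$, so we choose $T$ with $e^{-\theta T}<\sigma/2$. Finally $\tau$ and $d_0$ are chosen so small, respectively large, that the accumulated errors over $T$ rounds stay below $\sigma/2$.

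One round runs as follows. Let $H$ have all degrees in $((1-\tau)D,(1+\tau)D)$ and $\Delta_2(H)<\tau D$. Select each edge independently with probability $\theta/D$, and let $M$ be the set of selected edges that meet no other selected edge; $M$ is a matching. The steps, in order, are these.

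First we compute expectations. A fixed vertex $v$ is covered by $M$ with probability about $\theta e^{-k\theta}$. Indeed, an edge $e$ lies in $M$ with probability $(\theta/D)(1-\theta/D)^{|N(e)|}$. Here $|N(e)|=kD(1\pm O(\tau))$, since the codegree bound makes double counting negligible: distinct edges through $e$ overlap in pairs only $O(\tau D)$ times.

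Second, let $V'$ be the uncovered vertices and $H'=H[V']$. We need $|V'|\approx(1-\theta e^{-k\theta})|V(H)|$. For $v\in V'$ we need $\mathbb{E}\,d_{H'}(v)\approx D\,p^{k-1}$, where $p$ is the survival probability of a vertex. The point is that the $k-1$ other vertices of an edge through $v$ survive almost independently, again because codegrees are $o(D)$.

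Third, we prove concentration: the events $|V'|$ is as expected and $d_{H'}(v)=(1\pm\tau')D'$ for all $v$. We would use Azuma's or Talagrand's inequality in the edge-exposure martingale, or McDiarmid's bounded-differences inequality. Changing one edge's selection affects $d_{H'}(v)$ by at most $O(\Delta_2)$-controlled amounts, which is where $\Delta_2<\tau D$ enters. To handle all vertices simultaneously we use the union bound, or the Lov\'asz Local Lemma, since dependencies are bounded polynomially in $D$. Codegrees only decrease, so $\Delta_2(H')<\tau D=(\tau D/D')\,D'$. Thus the relative codegree grows by a constant factor $p^{-(k-1)}$ per round, which is fine for a bounded number of rounds.

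Iterating $T$ times gives a sequence of parameters $\tau_0=\tau<\tau_1<\dots<\tau_T$ with $\tau_{j+1}\le C(k,\theta)(\tau_j+D_j^{-c})$. The degrees $D_j\ge D\,p^{(k-1)j}$ stay above $d_0\,p^{(k-1)T}$, which is still large. The uncovered fraction is $\prod_j(1-\theta e^{-k\theta}\pm\tau_j)$, and this is at most $\sigma$ once $\theta$ is small and $T$ is large.

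I expect the main obstacle to be the concentration step, namely controlling degrees of the surviving hypergraph uniformly while errors compound over rounds. If the direct martingale bound is too weak, I would first try to cite the Pippenger--Spencer form of the result. Failing that, I would use Talagrand's inequality with certificates of size $O(k)$ per edge, which gives deviations of order $\sqrt{D}\log D$, easily $o(D)$ for $D\ge d_0$.
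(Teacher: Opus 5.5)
The paper does not prove Lemma~\ref{NPM}; it quotes it from Frankl--R\"{o}dl \cite{FR85}, whose argument is a nibble. Your architecture (boundedly many rounds of bite size $\theta$, first-moment estimates using the codegree bound) is therefore the right one, and your expectation computations are fine up to relative errors $O_{k,\theta}(\tau)$.

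The gap is the concentration step, and a sharper inequality will not close it. The invariant you propagate is that \emph{every} vertex of $H'$ has degree $(1\pm\tau')D'$. This fails, at a given vertex, with probability that does not tend to $0$ as $D\to\infty$.

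The hypothesis only gives $\Delta_2<\tau D$ for a fixed constant $\tau$, not $o(D)$ as you say at one point. So the link of $v$ may be split evenly among $L\approx 1/\tau$ co-neighbours $u_1,\dots,u_L$ with $d(v,u_i)\approx\tau D$. For example, take three disjoint copies of $\mathbb{Z}_N$ ($N$ odd), let $Q$ be a Sidon set of size $L$ and $|S|=D/L$, and let the edges be the triples $\{a,b,c\}$ with $b-a\in Q$ and $a+b+c\in S$. This $3$-graph is $D$-regular with $\Delta_2\le\max\{D/L,L\}$, $N$ can be arbitrarily large compared with $D$, and the link of $a$ is carried by the $L$ vertices $a+q$, $q\in Q$.

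In this situation $d_{H'}(v)$ is essentially proportional to the number of surviving $u_i$, a binomial-like count with $L$ trials. It deviates from its mean by a relative amount of order $\sqrt{\theta\tau}$ with probability bounded away from $0$. It also equals $0$ (all $u_i$ covered, $v$ not) with probability at least some $c(k,\theta,\tau)>0$ independent of $D$. This has several consequences:
\begin{itemize}
\item Neither a union bound nor the Local Lemma with $\mathrm{poly}(D)$ dependencies applies: the former needs per-vertex failure probability $o(1/n)$, the latter $D^{-\Omega(1)}$.
\item For $N\to\infty$, a random round creates vertices of degree $0$ with probability tending to $1$.
\item No inequality can give the $O(\sqrt{D}\log D)$ deviations you claim from Talagrand, since the true fluctuations are of order $\sqrt{\theta\tau}\,D$.
\item Azuma/McDiarmid, with Lipschitz constants up to $\Delta_2\approx\tau D$ over $\Theta(D^3)$ relevant edges, say nothing.
\end{itemize}
In particular the recursion $\tau_{j+1}\le C(\tau_j+D_j^{-c})$ is unattainable. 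Falling back on Pippenger--Spencer would give the lemma, but only by citing a stronger form of it.

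The missing idea is to tolerate exceptional vertices, as Frankl--R\"{o}dl do (see also the Alon--Spencer treatment of the nibble). Propagate instead:
\begin{itemize}
\item $d(x)=(1\pm\gamma_j)D_j$ for all but $\gamma_j|V_j|$ vertices;
\item $d(x)<K_jD_j$ for \emph{all} vertices;
\item $\Delta_2<\gamma_jD_j$.
\end{itemize}
The last two are inherited for free, since degrees and codegrees only decrease; $K_j$ and the ratio $\Delta_2/D_j$ grow by the factor $D_j/D_{j+1}=e^{O(k\theta)}$ per round.

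It is simplest to delete the vertices of \emph{all} selected edges, so that whether $x$ survives depends only on the edges through $x$. Then for $e,e'\ni v$ with $e\cap e'=\{v\}$, the survival events $A,B$ of $e\setminus\{v\}$ and $e'\setminus\{v\}$ satisfy $\Pr[A\cap B]\le e^{O(k^2\theta\gamma)}\Pr[A]\Pr[B]$, because at most $(k-1)^2\Delta_2$ edges meet both sets. Fewer than $kK\gamma D^2$ pairs $e,e'\ni v$ meet outside $v$. Hence $\mathrm{Var}(d_{H'}(v))=O_{k,K,\theta}(\gamma)D^2+O(KD)$.

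By Chebyshev, a good vertex most of whose edges avoid exceptional vertices stays good with probability $1-O(\gamma/\gamma'^2)$. The few vertices with many edges through exceptional vertices are simply declared exceptional. Markov then bounds the number of new exceptional vertices. The analogous bound $\mathrm{Cov}(I_x,I_y)\le (e^{O(\theta\gamma)}-1)\,\Pr[x\in V']\Pr[y\in V']$, using $d(x,y)<\gamma D$, concentrates $|V'|$.

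The error parameters now grow like a fixed root per round rather than linearly, which is harmless for $T=T(k,\sigma)$ rounds. After $T$ rounds you have at most $(1+a)n/k$ edges, with $a=O(\theta)$, covering all but $bn$ vertices with $b$ small. Discarding every edge that meets another of them leaves a matching missing at most $(2k+1)(a+b)n$ vertices.
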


	To apply this powerful lemma, we will use probabilistic method to find a spanning subgraph $H''$ of $H$, which satisfies the conditions of Lemma \ref{NPM}. This idea was first proposed by Alon et.al. in \cite{AFHRRS12} and has been widely used in this area since then.

\begin{defi}\label{EX}
	For any integer $k\geq 2$, let $H(U,W)$ be the $k$-graph on $U\cup W$ such that $E(H(U,W))=\{e\in\binom{U\cup W}{k} : e\cap U\neq\emptyset\}$. In particular, $H([s],[n]\setminus[s])=H_{1}$.
\end{defi}

\begin{lem}[Chernoff's bound]\label{random}
	Suppose $X_{1},\dots,X_{n}$ are independent random variables taking values in $\{0,1\}$. Let $X=\sum_{i=1}^{n}X_{i}$ and $\mu=\mathbb{E}[X]$. Then for any $0<\delta\leq 1$
	$$\mathbb{P}[X\geq(1+\delta)\mu]\leq e^{-\delta^{2}\mu/3} \textrm{ and }\mathbb{P}[X\leq(1-\delta)\mu]\leq e^{-\delta^{2}\mu/2}.$$
	In particular, when $X\sim \textrm{Bi}(n,p)$ and $\lambda<\frac{3}{2}np$, then
	$$\mathbb{P}[|X-np|\geq \lambda]\leq e^{-\Omega(\lambda^{2}/np)}.$$
\end{lem}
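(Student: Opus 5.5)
The plan is the standard exponential-moment (Chernoff) method, applied separately to the two tails and then combined for the binomial statement. Write $p_i=\mathbb{P}[X_i=1]$, so that $\mu=\sum_i p_i$.

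\textbf{Upper tail.} Fix $t>0$ and apply Markov's inequality to $e^{tX}$:
\[
\mathbb{P}[X\ge (1+\delta)\mu]\le e^{-t(1+\delta)\mu}\,\mathbb{E}[e^{tX}].
\]
By independence, $\mathbb{E}[e^{tX}]=\prod_i\bigl(1+p_i(e^t-1)\bigr)$. Using $1+x\le e^x$, this is at most $\exp\bigl((e^t-1)\mu\bigr)$. Choosing $t=\ln(1+\delta)$ gives
\[
\mathbb{P}[X\ge(1+\delta)\mu]\le \left(\frac{e^{\delta}}{(1+\delta)^{1+\delta}}\right)^{\mu}.
\]
It then remains to verify the elementary inequality $\delta-(1+\delta)\ln(1+\delta)\le -\delta^2/3$ for $0<\delta\le 1$. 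Set $\varphi(\delta)=\delta-(1+\delta)\ln(1+\delta)+\delta^2/3$. Then $\varphi(0)=0$ and $\varphi'(\delta)=-\ln(1+\delta)+2\delta/3$. Also $\varphi'(0)=0$ and $\varphi''(\delta)=-1/(1+\delta)+2/3$, which is negative on $[0,1/2)$ and positive on $(1/2,1]$. So $\varphi'$ first decreases and then increases. Since $\varphi'(1)=2/3-\ln 2<0$, we get $\varphi'\le 0$ on $[0,1]$, hence $\varphi\le 0$ there.

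\textbf{Lower tail.} Apply Markov's inequality to $e^{-tX}$ with $t>0$. The same product bound gives $\mathbb{E}[e^{-tX}]\le\exp\bigl((e^{-t}-1)\mu\bigr)$. Choosing $t=-\ln(1-\delta)$ (for $\delta<1$; the case $\delta=1$ follows by letting $\delta\to1$, or directly from $\mathbb{P}[X=0]=\prod_i(1-p_i)\le e^{-\mu}$) yields the bound
\[
\left(\frac{e^{-\delta}}{(1-\delta)^{1-\delta}}\right)^{\mu}.
\]
It then suffices to show $(1-\delta)\ln(1-\delta)\ge -\delta+\delta^2/2$. This follows from the Taylor series
\[
(1-\delta)\ln(1-\delta)=-\delta+\sum_{j\ge2}\frac{\delta^j}{j(j-1)},
\]
all of whose terms beyond the first are nonnegative.

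\textbf{Binomial statement.} For $X\sim\mathrm{Bi}(n,p)$ we have $\mu=np$; set $\delta=\lambda/(np)$, so $\delta<3/2$.
\begin{itemize}
\item Lower tail: if $\delta\le1$, it is bounded by $e^{-\delta^2\mu/2}=e^{-\lambda^2/(2np)}$. If $\delta>1$, the event $X\le np-\lambda<0$ is empty.
\item Upper tail when $\delta\le 1$: the first part gives $e^{-\lambda^2/(3np)}$.
\item Upper tail when $1<\delta<3/2$: the main (mild) obstacle is that the stated form assumes $\delta\le1$. Here I will reuse the general bound $\bigl(e^\delta/(1+\delta)^{1+\delta}\bigr)^\mu$. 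The exponent $\delta-(1+\delta)\ln(1+\delta)$ is strictly negative and continuous on the compact range $[1,3/2]$, so it is at most $-c\delta^2$ for an absolute constant $c>0$. Alternatively, one can use the known inequality $(1+\delta)\ln(1+\delta)-\delta\ge \delta^2/(2+\delta/ \! \cdot)$, in the form $\ge\delta^2/(2+\delta)$, valid for all $\delta\ge0$.
\end{itemize}
A union bound over the two tails then gives $\mathbb{P}[|X-np|\ge\lambda]\le 2e^{-c\lambda^2/(np)}$. Finally, the factor $2$ is absorbed into the $\Omega(\cdot)$ in the exponent, in the regime $\lambda^2/(np)$ bounded below that the statement implicitly intends (for tiny $\lambda^2/np$ the right-hand side is close to $1$ and the bound is trivial after adjusting constants).
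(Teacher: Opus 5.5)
The paper gives no proof of this lemma. It quotes it as the classical Chernoff bound and uses it as a black box, so there is no argument of the authors' to compare against. Your exponential-moment proof is the standard one and it is correct. Each step checks out:
\begin{itemize}
\item the choices $t=\ln(1+\delta)$ and $t=-\ln(1-\delta)$;
\item the analysis of $\varphi$, where $\varphi'$ decreases and then increases, with $\varphi'(0)=0$ and $\varphi'(1)=2/3-\ln 2<0$;
\item the series $(1-\delta)\ln(1-\delta)=-\delta+\sum_{j\ge 2}\delta^j/(j(j-1))$;
\item the case $\delta=1$ via $\mathbb{P}[X=0]\le e^{-\mu}$;
\item the separate treatment of $1<\delta<3/2$, either by compactness or by $(1+\delta)\ln(1+\delta)-\delta\ge \delta^2/(2+\delta)$. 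That inequality does hold for all $\delta\ge 0$; your displayed version of it is garbled, evidently a typo.
\end{itemize}

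The one inaccurate remark is your final parenthetical. For tiny $\lambda^2/(np)$ the bound is not ``trivial after adjusting constants.'' If $np\notin\mathbb{Z}$ and $0<\lambda$ is at most the distance from $np$ to the nearest integer, then $\mathbb{P}[|X-np|\ge\lambda]=1>e^{-c\lambda^2/(np)}$ for every $c>0$. The $e^{-\Omega(\lambda^2/np)}$ form is therefore only meaningful when $\lambda^2/(np)$ is large, and that is the only way the paper uses it: there $\lambda^2/(np)$ is of order $n^{0.1}$ or $n^{0.09}$. In that regime your absorption $2e^{-cx}\le e^{-cx/2}$ for $x\ge 2\ln 2/c$ is exactly what is needed.
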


\begin{lem}\label{random1}
	Let $H$ be the $4$-graph defined in Construction \ref{Cons1}. Let $\alpha=s/n$. Let $R$ be chosen from $V(H)$ by taking each vertex uniformly at random with probability $n^{-0.9}$, and then arbitrarily deleting less than $4$ vertices so that $|R|\in 4\mathbb{Z}$. Take $n^{1.1}$ independent copies of $R$ and denote them by $R^{i}$, $1\leq i\leq n^{1.1}$. Let $T^{i}=[t]\cap R^{i}$, $V^{i}=([t+s]\setminus[t])\cap R^{i}$ and $W^{i}=R^{i}\setminus T^{i}$. For each $A\subseteq V(H)$ with $|A|\leq 4$, let $Y_{A}=|\{i : A\subseteq R^{i}\}|$.
	Then with probability $1-o(1)$, all of the following statements hold:
\begin{enumerate}[(1)]
	\item For every $v\in V(H)$,$Y_{\{ v\}}=(1+o(1))n^{0.2}$.
	\item $Y_{\{ u,v\}}\leq 2$ for every pair $\{u,v\}\subseteq V(H)$.
	\item $Y_{e}\leq 1$ for every edge $e\in E(H)$.
	\item For all $1\leq i\leq n^{1.1}$, we have $|R^{i}|=(1+o(1))\frac{4-4\alpha-3\eta}{3}n^{0.1}$, $|T^{i}|=(1+o(1))\frac{1-4\alpha-3\eta}{3}n^{0.1}$, $|V^{i}|=(\alpha+o(1))n^{0.1}$ and $|W^{i}|=(1+o(1))n^{0.1}$.
	\item For all $1\leq i\leq n^{1.1}$, we have $e(H[R^{i}])\geq \binom{|R^{i}|}{4}-\binom{|R^{i}|-|T^{i}|-|V^{i}|}{4}-2\varepsilon |W^{i}|^{4}$, and $H[W^{i}]$ is not $200\varepsilon^{1/4}$-close to the graph $H(V^{i},W^{i}\setminus V^{i})$.
\end{enumerate}
\end{lem}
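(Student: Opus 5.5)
Each of the five properties will be proved separately, and the conclusion will follow from a union bound over polynomially many events, each failing with probability $e^{-n^{\Omega(1)}}$ or $o(1)$. Write $N=n+t=(1+o(1))\frac{4-4\alpha-3\eta}{3}n$ and $p=n^{-0.9}$. Deleting fewer than $4$ vertices changes every count below by at most $3$, so we may work with the undeleted random sets.

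\textbf{Properties (1) and (4).} For a fixed vertex $v$, $Y_{\{v\}}\sim \mathrm{Bi}(n^{1.1},p)$ with mean $n^{0.2}$. Lemma~\ref{random} with $\lambda=n^{0.15}$ gives failure probability $e^{-\Omega(n^{0.1})}$, and a union bound over the $N$ vertices handles (1). For (4), $|R^i|$, $|T^i|$, $|V^i|$ and $|W^i|$ are binomial with means $pN$, $pt$, $ps$ and $pn$. These means are $\Theta(n^{0.1})$ because $\alpha\ge c$ and $t=\Theta(n)$; for $t$ this needs $1-4\alpha-3\eta$ bounded below, which holds when $s\le n/5$ and $\eta$ is small. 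Chernoff with relative error $n^{-0.01}$ fails with probability $e^{-\Omega(n^{0.08})}$, and a union bound over the $n^{1.1}$ copies handles (4).

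\textbf{Properties (2) and (3).} For a pair $\{u,v\}$, $\Pr[\{u,v\}\subseteq R^i]=p^2$. The probability that three distinct copies contain it is at most $\binom{n^{1.1}}{3}p^6=O(n^{3.3-5.4})=O(n^{-2.1})$. Summing over the $O(n^2)$ pairs gives $o(1)$, which proves (2). For a $4$-set $e$, the probability that two copies both contain $e$ is at most $\binom{n^{1.1}}{2}p^{8}=O(n^{2.2-7.2})=O(n^{-5})$. Summing over the $O(n^4)$ edges gives $o(1)$, which proves (3).

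\textbf{Property (5).} This is the main obstacle: it needs concentration of edge counts of a random induced subhypergraph, uniformly over $n^{1.1}$ copies. Fix $i$. The count of edges of $H$ inside $R^i$, and also the count of non-edges, is a degree-$4$ polynomial in independent indicator variables. Its mean is $p^4 e(H)$ with $e(H)\ge\binom{N}{4}-\binom{n-s}{4}-\varepsilon n^4$. I would establish concentration within $o(n^{0.4})$ either by Janson/Kim--Vu polynomial concentration or by a second-moment bound plus a union bound. The simplest route is to condition on $|R^i|$, note that $R^i$ is then a uniform random set of that size, and apply a hypergeometric-sampling concentration lemma for $k$-graphs (a bounded-differences martingale over vertex exposure). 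Vertex exposure gives Lipschitz constant $O(|R^i|^3)$ and deviations $n^{0.39}$ with failure probability $e^{-n^{\Omega(1)}}$, which is far more than enough.

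Concentration alone is not the inequality, so two further checks are needed. First, combining with (4), the main terms $\binom{|R^i|}{4}-\binom{|R^i|-|T^i|-|V^i|}{4}$ equal $p^4\bigl(\binom{N}{4}-\binom{n-s}{4}\bigr)(1+o(1))$, because $|R^i|-|T^i|-|V^i|=|W^i\setminus V^i|\approx p(n-s)$. Second, the slack $\varepsilon n^4p^4$ is at most $(1+o(1))\varepsilon|W^i|^4<2\varepsilon|W^i|^4$, and it absorbs the $o(n^{0.4})$ errors. For the non-closeness part, $G$ is not $400\varepsilon^{1/4}$-close to $H_1$, so $|E(H_1)\setminus E(G)|\ge 400\varepsilon^{1/4}n^4$. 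The number of $4$-sets of $W^i$ lying in $E(H(V^i,W^i\setminus V^i))\setminus E(H[W^i])$ is again a degree-$4$ polynomial with mean at least $400\varepsilon^{1/4}n^4p^4$, so the same concentration gives at least $300\varepsilon^{1/4}|W^i|^4>200\varepsilon^{1/4}|W^i|^4$. A union bound over all $i$ completes the proof.
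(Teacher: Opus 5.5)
Your treatment of (1)--(4) matches the paper's. You use Chernoff for $Y_{\{v\}}$ and for the four set sizes, and a first-moment count for pairs lying in three copies and for edges lying in two. In (5) you use the same expectation comparisons: $e(H)\ge\binom{N}{4}-\binom{n-s}{4}-\varepsilon n^4$, $|E(H_1)\setminus E(G)|\ge 400\varepsilon^{1/4}n^4$, and converting $p^4n^4$ into $|W^i|^4$ via (4).

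The real difference is the concentration tool in (5).
\begin{itemize}
\item The paper applies Janson's inequality directly to the independent vertex sampling, for the lower tails of $e(H[R^i])$ and of the missing-edge count. This requires bounding $\Delta_Z=O(n^{0.7})$. The paper then uses a small constant relative deviation $\gamma$, which has to be chosen small in terms of $\varepsilon$.
\item You instead condition on $|R^i|=r$ and run a bounded-differences martingale on the uniform $r$-subset, with Lipschitz constant $\binom{r-1}{3}$. This gives a two-sided additive error $n^{0.39}=o(n^{0.4})$ with failure probability $e^{-\Omega(n^{0.08})}$.
\end{itemize}
Your route avoids computing $\Delta$ and tuning $\gamma$. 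The small cost is translating the conditional mean $e(H)(r)_4/(N)_4$ back to $p^4e(H)$ using (4). Both bounds comfortably survive the union bound over $n^{1.1}$ copies, so your argument is valid.

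Two of your side remarks need correcting.
\begin{itemize}
\item The ``second-moment bound plus a union bound'' you offer as an alternative would not work. Pairs of edges meeting in one vertex give $\mathrm{Var}(e(H[R^i]))=\Theta(n^{0.7})$. So Chebyshev at deviation $\Theta(\varepsilon n^{0.4})$ fails with probability $\Theta(n^{-0.1})$ per copy, which is far too large for $n^{1.1}$ copies.
\item Deleting fewer than $4$ vertices does not change every count by at most $3$. Edge counts in $R^i$ can move by up to $3\binom{|R^i|-1}{3}=O(n^{0.3})$, which is harmless. The problem is $Y_{\{v\}}$: a genuinely arbitrary rule such as ``always delete the largest indices'' removes the top vertex from roughly three quarters of the copies containing it, and this breaks (1).
\end{itemize}
The fix is to delete uniformly at random. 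Then each vertex is removed from only $O(n^{0.1})$ of its roughly $n^{0.2}$ copies with high probability. The paper glosses over this point in the same way.
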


\begin{proof}[Proof of Lemma \ref{random1}]
	Note that the removal of less than $4$ vertices from each $R^{i}$ does not affect \textsl{(1)-(5)}. Also note that $|Y_{A}| \sim Bi(n^{1.1},n^{-0.9|A|})$ for $A\subseteq V(H)$. Thus, $\mathbb{E}[Y_{\{ v\}}]=n^{0.2}$ for $v\in V(H)$, and it follows from Lemma \ref{random} that
	$\mathbb{P}[|Y_{\{ v\}}-n^{0.2}|\geq n^{0.15}]\leq e^{-\Omega(n^{0.1})}.$
	Hence \textsl{(1)} holds with probability at least $1-e^{-\Omega(n^{0.1})}$.
	
	To prove \textsl{(2)} and \textsl{(3)}, let
	$Z_{2}=|\{\{u,v\}\in\binom{V(H)}{2}: Y_{\{u,v\}}\geq 3\}|,$
	and for $\ell\geq 3$, let
	$$Z_{\ell}=\left|\{A\in\binom{V(H)}{\ell}: Y_{A}\geq 2\}\right|.$$
	Then $\mathbb{E}[Z_{2}]<(n+t)^{2}(n^{1.1})^{3}(n^{-0.9})^{6}=(\frac{4-4\alpha-3\eta}{3})^{2}n^{-0.1}<2n^{-0.1}$ and $\mathbb{E}[Z_{\ell}]<(n+t)^{\ell}(n^{1.1})^{2}(n^{-0.9})^{2\ell}=(\frac{4-4\alpha-3\eta}{3})^{\ell}n^{2.2-0.8\ell}<n^{-0.1}$ (for $\ell\geq 3$ and $n$ is sufficiently large). By Markov's inequality,
	$$\mathbb{P}[Z_{2}=0]>1-2n^{-0.1}\textrm{ and }\mathbb{P}[Z_{\ell}=0]>1-n^{-0.1} \textrm{ for } \ell \geq 3.$$
	Thus \textsl{(2)} and \textsl{(3)} hold with probability at least $1-2n^{-0.1}$ and $1-n^{-0.1}$, respectively.

	By Lemma \ref{random} (with $\lambda=n^{0.095}$), we have
	$$\mathbb{P}\left[\left||R^{i}|-\frac{4-4\alpha-3\eta}{3}n^{0.1}\right|>n^{0.095}\right]\leq e^{-\Omega(n^{0.09})}$$
	for each $1\leq i\leq n^{1.1}$. Thus by the union bound, $|R^{i}|=(1+o(1))\frac{4-4\alpha-3\eta}{3}n^{0.1}$ holds for all $1\leq i\leq n^{1.1}$ with probability $1-n^{1.1}e^{-\Omega(n^{0.09})}$. Similarly, \textsl{(4)} holds with probability $1-4n^{1.1}e^{-\Omega(n^{0.09})}$.

	Next, we prove \textsl{(5)}. Let $\kappa=\frac{1-4\alpha-3\eta}{3}$. Conditioning on $\left||R^{i}|-(1+\kappa)n^{0.1}\right|<n^{0.095}$, $\left||T^{i}|-\kappa n^{0.1}\right|<n^{0.095}$, $\left||V^{i}|-\alpha n^{0.1}\right|<n^{0.095}$ and $\left||W^{i}|-n^{0.1}\right|<n^{0.095}$ for all $1\leq i\leq n^{1.1}$, and using the assumption that $n$ is sufficiently large, we have
\begin{displaymath}
	(400\varepsilon^{1/4} n^{4})(n^{-0.9})^{4}=400\varepsilon^{1/4} n^{0.4}\geq 300\varepsilon^{1/4}|W^{i}|^{4},
\end{displaymath}
\begin{displaymath}
	\left(\binom{(1+\kappa)n}{4}-\binom{(1+\kappa)n-s-t}{4}-\varepsilon n^{4}\right)(n^{-0.9})^{4}\geq\binom{|R^{i}|}{4}-\binom{|R^{i}|-|T^{i}|-|V^{i}|}{4}-\frac{4}{3}\varepsilon |W^{i}|^{4}.
\end{displaymath}
Note that $H$ is stable, so both $H[R^{i}]$ and $H[W^{i}]$ are stable, and for each fixed $R^{i}$, we have
\begin{displaymath}
	\begin{array}{lll}
	\mathbb{E}\left[\left|E(H(V^{i},W^{i}\setminus V^{i}))\setminus E(H[W^{i}]))\right|\right] & = & (1-o(1))\left|E(H([s],[n]\setminus[s]))\setminus E(G)\right|(n^{-0.9})^{4}\\[0.25em]
	& \geq &(1-o(1))(400\varepsilon^{1/4} n^{4})(n^{-0.9})^{4}\\[0.25em]
	& \geq &(1-o(1))300\varepsilon^{1/4}|W^{i}|^{4}\geq 250\varepsilon^{1/4}|W^{i}|^{4},
	\end{array}
\end{displaymath}
	and
\begin{displaymath}
	\begin{array}{lll}
 	\mathbb{E}\left[e(H[R^{i}])\right] & = & (1-o(1))e(H)(n^{-0.9})^{4}\\[0.3em]
	& \geq &(1-o(1))\left(\binom{(1+\kappa)n}{4}-\binom{(1+\kappa)n-s-t}{4}-\varepsilon n^{4}\right)(n^{-0.9})^{4}\\[0.3em]
	& \geq &(1-o(1))\left(\binom{|R^{i}|}{4}-\binom{|R^{i}|-|T^{i}|-|V^{i}|}{4}-\frac{4}{3}\varepsilon |W^{i}|^{4}\right)\\[0.3em]
	& \geq &\binom{|R^{i}|}{4}-\binom{|R^{i}|-|T^{i}|-|V^{i}|}{4}-\frac{5}{3}\varepsilon |W^{i}|^{4}.
	\end{array}
\end{displaymath}
	In particular,
	$\mathbb{E}[e(H[R^{i}])]=\Omega(n^{0.4})$ and $\mathbb{E}\left[\left|E(H(V^{i},W^{i}\setminus V^{i}))\setminus E(H[W^{i}])\right|\right]=\Omega(n^{0.4}).$
	Next we apply Janson's inequality to bound the deviation of $e(H[R^{i}])$ and $|E(H(V^{i},W^{i}\setminus V^{i}))\setminus E(H[W^{i}])|$.
	Write $e(H[R^{i}])=X=\sum_{e\in E(H)}X_{e}$, where $X_{e}=1$ if $e\subseteq R^{i}$ and $X_{e}=0$ otherwise. Similarly, write $|E(H(V^{i},W^{i}\setminus V^{i}))\setminus E(H[W^{i}])|=Y=\sum_{f=\{a_{1},a_{2},a_{3},a_{4}\}\in E(H_{1})\setminus E(G)}Y_{f}$, where $Y_{f}=1$ if $\{a_{1}+t,a_{2}+t,a_{3}+t,a_{4}+t\}\in E(H(V^{i},W^{i}\setminus V^{i}))\setminus E(H[W^{i}])$ and $Y_{f}=0$ otherwise.
	Then for $Z\in\{X,Y\}$,
	$$\Delta_{Z}=\sum_{e\cap f\neq\emptyset}\mathbb{P}[Z_{e}=Z_{f}=1]\leq(n^{-0.9})^{8-\ell}\binom{n+t}{4}\binom{4}{\ell}\binom{n+t-4}{4-\ell}=O(n^{0.7}).$$
	By Janson's inequality, for any $\gamma>0$ and $Z\in\{X,Y\}$, we have
	$$\mathbb{P}\left[Z\leq (1-\gamma)\mathbb{E}[Z]\right]\leq e^{-\gamma^{2}\mathbb{E}[Z]/(2+\Delta_{Z}/\mathbb{E}[Z])}=e^{-\Omega(n^{0.1})}.$$
	Taking $\gamma$ sufficiently small, the union bound shows that with probability at least $1-n^{1.1}e^{-\Omega(n^{0.1})}$,
	$$e(H[R^{i}])\geq \binom{|R^{i}|}{4}-\binom{|R^{i}|-|T^{i}|-|V^{i}|}{4}-2\varepsilon |W^{i}|^{4}$$
	holds for all $1\leq i\leq n^{1.1}$. Moreover, with probability at least $1-n^{1.1}e^{-\Omega(n^{0.1})}$, the statement that $H[W^{i}]$ is not $200\varepsilon^{1/4}$-close to the graph $H(V^{i},W^{i}\setminus V^{i})$, also holds for all $1\leq i\leq n^{1.1}$.
	Therefore \textsl{(5)} holds with probability at least
	$$(1-n^{1.1}e^{-\Omega(n^{0.1})})^{2}>1-2n^{1.1}e^{-\Omega(n^{0.1})}.$$

	Finally, it follows from the union bound that with probability at least
	$$1-e^{-\Omega(n^{0.1})}-3n^{-0.1}-4n^{1.1}e^{-\Omega(n^{0.09})}-2n^{1.1}e^{-\Omega(n^{0.1})}=1-o(1),$$
	all \textsl{(1)-(5)} hold. This completes the proof of this lemma. 
\end{proof}

\begin{lem}\label{PFM}
	Let $0<\eta<\varepsilon<10^{-20}$ be two real numbers. Let $n,s,t$ be positive integers such that $s=(1+o(1))\alpha n$ and $t=(1+o(1))\frac{1-4\alpha-3\eta}{3}n$ for some $\alpha\in(0,1/5]$. Suppose that $H$ is a stable $4$-graph on $[n+t]$ which satisfies the following statements.
\begin{enumerate}[(1)]
	\item $e(H)\geq \binom{n+t}{4}-\binom{n-s}{4}-2\varepsilon n^{4}$.
	\item For all $i \in[t]$, $i$ is a full degree vertex in $H$.
	\item $H\setminus[t]$ is not $200\varepsilon^{1/4}$-close to the graph $H([s+t]\setminus[t],[n]\setminus [s+t])$.
\end{enumerate}
	Then for sufficiently large $n$, $H$ contains a perfect fractional matching.
\end{lem}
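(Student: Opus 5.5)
Write $N=n+t$. By LP duality it suffices to show $\tau^{*}(H)\ge N/4$. Suppose not, so $\nu^{*}(H)<N/4$. The plan is to strip off the full-degree vertices of $[t]$, which brings us back into the regime of Theorem~\ref{FStableK4}, and let the closeness conclusion of that theorem clash with hypothesis (3).

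First, the reduction. Take a minimum fractional vertex cover $\omega$ of $H$. Because $H$ is stable, we may assume $\omega$ is non-increasing. Consider $H'=H\setminus[t]$, a stable $4$-graph on the vertex set $[N]\setminus[t]$ of size $n$. Every vertex $i\in[t]$ has full degree, so it lies in edges together with any three vertices of $H'$. We aim to show that a fractional matching of $H'$ of size $x$ extends to one of $H$ of size $x+t$, provided $4x+t\le n$ and an appropriate divisibility/fractional condition holds. The extension uses the edges through $[t]$: distribute each vertex of $[t]$ over triples of uncovered (or partially uncovered) vertices of $H'$. Fractionally this works as long as the total uncovered weight in $[N]\setminus[t]$ is at least $3t$. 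Hence, if $\nu^{*}(H)<N/4$, then
\[
\nu^{*}(H')< \frac{n-t}{3}\cdot\frac{1}{1}\approx \frac{n-t}{3}...
\]
More precisely, solve $4x+3(t')=n$ with $x+t'=N/4$; this gives $x\le (n-3t)/4\cdot$ adjusted. Then $t=(1+o(1))\frac{1-4\alpha-3\eta}{3}n$ yields
\[
\nu^{*}(H')\le s':=\frac{n-3t}{4}\approx \left(\alpha+\tfrac{3\eta}{4}\right)n .
\]
I will redo this carefully. The cleanest route is through a weight-shifting argument: if $\nu^{*}(H)<N/4$, then $\omega$ gives weight at most $1$ to each vertex of $[t]$, and restricting $\omega$ to $V(H')$ gives a cover of $H'$ of size less than $N/4-\sum_{i\le t}\omega(i)$. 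Since vertices of $[t]$ have full degree and $\omega$ is monotone, we can argue that $\omega(i)=\omega(1)$ is close to the maximum allowed. Alternatively, apply the $4$-graph analogue of the extension directly with a fractional matching rather than with covers.

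Next, the edge count. Hypothesis (1) together with the edges through $[t]$ gives
\[
e(H')\ge \binom{n}{4}-\binom{n-s}{4}-2\varepsilon n^{4}.
\]
Since $s'=s+O(\eta n)$ and $\eta<\varepsilon$, this is at least $\binom{n}{4}-\binom{n-s'}{4}-3\varepsilon n^{4}$. We also need $s'\le (n-3)/(5-\delta)$; this holds because $\alpha\le 1/5$ and $\eta$ is tiny compared with the slack supplied by $\delta$, though this should be checked. Theorem~\ref{FStableK4}, applied with $3\varepsilon$ and $c$ taken to be, say, $\alpha/2$, then shows that $H'$ is $100(3\varepsilon)^{1/4}$-close to $H_{1}$ on $n$ vertices with parameter $s'$. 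Transferring from $s'$ to $s$ costs $O(\eta n^{4})$ edges, so $H'$ is $200\varepsilon^{1/4}$-close to $H([s+t]\setminus[t],[n]\setminus[s+t])$. This contradicts hypothesis (3).

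The main obstacle is the first step: making precise that $\nu^{*}(H)<N/4$ forces $\nu^{*}(H')\le s+O(\eta n)$ using the full-degree vertices of $[t]$. I would try complementary slackness. Since $[t]$ consists of full-degree vertices, a minimum cover assigns them some common weight $w$, and every vertex of $H'$ with positive weight is saturated. Counting saturated vertices, as in Proposition~\ref{CS}, should then bound the cover size of $H'$.

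A separate problem is that the application of Theorem~\ref{FStableK4} needs $s'\ge cn$ for a constant $c$, and for small $\alpha$ this holds only with $c$ depending on $\alpha$. I would handle it by keeping $\alpha$ fixed.
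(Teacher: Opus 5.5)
Your reduction is the same as the paper's. You delete $[t]$, set $s'=(n-3t)/4$ (the paper's $s^{*}=(n+t)/4-t$), check that $e(H')\ge\binom{n}{4}-\binom{n-s'}{4}-3\varepsilon n^{4}$ and $s'\le (n-3)/(5-\delta)$, invoke Theorem~\ref{FStableK4}, and contradict hypothesis (3). Those checks are fine; in fact $s'-s=(3\eta/4+o(1))n>0$, so transferring closeness from $s'$ back to $s$ costs nothing. Your worry about needing $\varepsilon\ll c$ with $c$ of order $\alpha$ is a looseness the paper's own proof shares, and it is not the issue.

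\textbf{The gap.} It is the step you yourself call the main obstacle, and it is the real content of the lemma. You never show that $\nu^{*}(H')\ge s'$ yields a perfect fractional matching of $H$, or equivalently that $\nu^{*}(H)<N/4$ forces $\nu^{*}(H')<s'$. Your displayed computations for this are garbled. The complementary-slackness sketch does not supply it either, for two reasons:
\begin{itemize}
\item Proposition~\ref{CS} bounds the \emph{support} of a minimum cover, not its size.
\item If the vertices of $[t]$ receive a common weight $w$, restricting the cover to $V(H')$ only gives $\tau^{*}(H')<N/4-tw$. This is at most $s'=N/4-t$ only when $w=1$, and for $w<1$ you have no argument.
\end{itemize}
Your one-line assertion that the extension ``works fractionally'' is the right intuition, but it is exactly what needs proof.

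\textbf{How to close it.} You need an explicit extension.
\begin{itemize}
\item Scale a maximum fractional matching of $H'$ down to a fractional matching $\phi$ of size exactly $s'$.
\item For $v\in V(H')$ let $d(v)=1-\sum_{v\in e}\phi(e)\in[0,1]$, so that $\sum_{v}d(v)=n-4s'=3t$.
\item Since $\{i\}\cup T\in E(H)$ for all $i\in[t]$ and $T\in\binom{V(H')}{3}$, it suffices to write $d/t=\sum_{T}\lambda_{T}\mathbf{1}_{T}$ as a convex combination of indicator vectors of $3$-sets, and then give weight $\lambda_{T}$ to $\{i\}\cup T$ for every $i\in[t]$.
\item Such a decomposition exists because $d/t\in[0,1]^{V(H')}$ has coordinate sum $3$. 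It therefore lies in the hypersimplex whose vertices are exactly the vectors $\mathbf{1}_{T}$.
\end{itemize}
The paper does the same job more concretely. It takes $\phi$ lexicographically maximal and shows the loads are non-increasing with at most four strictly fractional values. It then sends vertices $2,\dots,t$ to disjoint triples of zero-load vertices, and lets vertex $1$ absorb the remaining deficiency (total $3$) through a small linear system. Either way, this step has to be supplied; without it your proposal proves only the easier half of the lemma.
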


\begin{proof}[Proof of Lemma \ref{PFM}]
	Let $H'=H\setminus[t]$.	 Then $H'$ is a stable $n$-vertex $4$-graph. We have $e(H')\geq\binom{n}{4}-\binom{n-s}{4}-2\varepsilon n^{4}$ and know that $H'$ is not $200\varepsilon^{1/4}$-close to the graph $H([s+t]\setminus[t],[n]\setminus [s+t])$.
	Let $s^{*}=(n+t)/4-t=(1+o(1))(\alpha+3\eta/4)n\geq s$. Then we have $cn<s^{*}<\frac{n-3}{5-10^{-10}}$ for some positive real constant $c$ and $e(H')\geq\binom{n}{4}-\binom{n-s^{*}}{4}-4\varepsilon n^{4}$. We also know that $H'$ is not $200\varepsilon^{1/4}$-close to the graph $H([s^{*}+t]\setminus[t],[n]\setminus [s^{*}+t])$. If $\nu^{*}(H')\leq s^{*}$, then by Theorem \ref{FStableK4}, we know that $H'$ is $100\sqrt{2}\varepsilon^{1/4}$-close to the graph $H([s^{*}+t]\setminus[t],[n]\setminus [s^{*}+t])$, a contradiction. So we have $\nu^{*}(H')\geq s^{*}$. Now we are going to construct a perfect fractional matching of $H$.

	Let $\phi: E(H')\to [0,1]$ be a fractional matching of $H'$ with size $s^{*}$, such that for each $i\in[n+t]\setminus[t]$, the value $\sum_{i\in e} \phi(e)$ is maximized subject to the condition that for every $j\in [t+i-1]\setminus[t]$, $\sum_{j\in e} \phi(e)$ is also maximized. For each $i\in[n+t]\setminus[t]$, let $\phi(i):=\sum_{i\in e\in E(H')}\phi(e)$.
 
\begin{cla}\label{dec}
	$\phi(t+1)\geq \phi(t+2)\geq\dots\geq\phi (t+n)$.
\end{cla}

\begin{proof}[Proof of Claim \ref{dec}]
	If there exists $i,j\in[n+t]\setminus[t]$ such that $i<j$ and that $\phi(i)<\phi(j)$. Then there exists $e\in E(H')$ which contains $j$, such that $\phi(e)>\phi(e')$, where $e'=(e\setminus\{j\})\cup\{i\}\in E(H')$. Let $\rho=\min\{1-\phi(i),\phi(e)-\phi(e')\}>0$. Then we replace $\phi(e)$ and $\phi(e')$ with $\phi(e)-\rho$ and $\phi(e')+\rho$ respectively. Note that the size of $\phi$ does not change and for all $p\in([n+t]\setminus[t])\setminus \{i,j\}$, $\phi(p)$ does not change as well. However, $\phi(i)$ strictly increases, which contradicts to our choice of $\phi$.
\end{proof}

\begin{cla}\label{BV}
	Let $A=\{i\in[n+t]\setminus [t]: 0<\phi(i)<1\}$, then $|A|\leq 4$.
\end{cla}

\begin{proof}[Proof of Claim \ref{BV}]
	By Claim \ref{dec}, we may assume that $A=\{q+1,\dots,q+\ell\}$. If $|A|\geq 5$, then there exists $e\in E(H')$ with $\phi(e)>0$ such that $\{q+j,q+j+1,\dots,q+\ell\}\subseteq e$ and $q+j-1\not\in e$ for some integer $\ell\geq j\geq 2$. 
Let $e'=(e\setminus\{q+j\})\cup\{q+j-1\}$. Since $H'$ is stable, we have $e'\in E(H')$. 
Let $\rho=\min\{1-\phi(q+j-1),\,\phi(e)\}>0.$
We then replace $\phi(e)$ and $\phi(e')$ by $\phi(e)-\rho$ and $\phi(e')+\rho$, respectively. Note that the size of $\phi$ remains unchanged, and for every
$p\in([n+t]\setminus[t])\setminus\{q+j-1,q+j\},$
the value of $\phi(p)$ is unchanged as well. 
However, the new value of $\phi(q+j-1)$ strictly increases, contradicting to the choice of $\phi$.
\end{proof}
    
	Assume that $A=\{q+1,q+2,\dots,q+\ell\}$. Then we have $\phi(i)=0$ for all $i>q+\ell$ and $\phi(i)=1$ for all $t<i\leq q$. Furthermore, we have $q=t+4s^{*}$ when $A=\emptyset$ and $t+4s^{*}-|A|<q<t+4s^{*}$ when $A\neq\emptyset$, where $4s^{*}=n-3t$ is an integer. For each $1\leq j\leq \ell$, let $a_{j}=\phi(q+j)\in(0,1)$. 
    
    Now we extend $\phi$ to $E(H)$, such that for any $i\in[n+t]$, $\phi(i):=\sum_{i\in e\in E(H)}\phi(e)=1$. For any $i\in[t]$, $i$ is a full degree vertex in $H$. So $\forall e\in\binom{[n+t]}{4}$, $i\in e$ implies $e\in E(H)$.
    For each $2\leq i\leq t$, let $\phi(\{i,t+4s^{*}+3i-2,t+4s^{*}+3i-1,t+4s^{*}+3i\})=1$. If $A=\emptyset$, then let $\phi(\{1,t+4s^{*}+1,t+4s^{*}+2,t+4s^{*}+3\})=1$. If $A\neq\emptyset$, then let $B=[t+4s^{*}+3]\setminus[q+\ell]$, $p=t+4s^{*}-q$ and $E_{0}=\{e\in E(H): 1\in e, B\subseteq e, |e\cap A|=|A|-p\}$. Observe that $1\leq p<|A|$, $|A|-p+|B|=3$ and $|E_{0}|=\binom{|A|}{|A|-p}\geq |A|=\ell$. Therefore, the following system of equations always admits a solution.
\begin{equation}\label{systemofeq}
    \sum_{q+j\in e\in E_{0}}\phi(e)=1-a_{j}, 1\leq j\leq l.
\end{equation}
    Let $\{\phi(e):e\in E_{0}\}$ be a solution of \eqref{systemofeq}. Then for each $e\in E_{0}$, we have $0\leq \phi(e)\leq 1$. Moreover, for each $i\in B$, $\sum_{i\in e\in E_{0}}\phi(e)=\sum_{1\in e\in E_{0}}\phi(e)=\sum_{e\in E_{0}}\phi(e)=1$. 
    
    Finally, for any remaining $e\in E(H)$, let $\phi(e)=0$. Then, $\phi$ is a fractional matching of $H$, and for each $i\in[n+t]$, $\phi(i)=\sum_{i\in e\in E(H)}\phi(e)=1$. Therefore, $\phi$ is a perfect fractional matching of $H$.
\end{proof}	

\begin{lem}[Alon et.al., \cite{AFHRRS12}]\label{random2}
	Let $H$ be a $k$-graph on $\Theta(n)$ vertices. Assume that $R^{i}\subseteq V(H)$, $1\leq i\leq n^{1.1}$, satisfy conditions $(1)$-$(3)$ in Lemma \ref{random1} and that each $H[R^{i}]$ has a perfect fractional matching $\omega^{i}$. Then there exists a spanning subgraph $H''$ of $H$ such that $d_{H''}(v)=(1+o(1))n^{0.2}$ for each $v\in V(H)$, and $\Delta_{2}(H'')\leq n^{0.1}$.
\end{lem}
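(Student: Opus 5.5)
We build $H''$ by randomly sparsifying the fractional matchings $\omega^{i}$ and taking the union of the resulting edge sets. For each $i$, let $\omega^{i}$ be the given perfect fractional matching of $H[R^{i}]$, so that $\sum_{v\in e\subseteq R^{i}}\omega^{i}(e)=1$ for every $v\in R^{i}$. Independently over $i$ and over edges $e\in E(H[R^{i}])$, put $e$ into a random set $E^{i}$ with probability $\omega^{i}(e)$. Define $H''$ on $V(H)$ by $E(H'')=\bigcup_{i}E^{i}$.

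By condition (3) of Lemma~\ref{random1}, every edge lies in at most one $R^{i}$. Hence the events $\{e\in E^{i}\}$ over distinct pairs $(i,e)$ correspond to distinct edges, and the union is effectively a disjoint union. For a vertex $v$,
\[
d_{H''}(v)=\sum_{i:\,v\in R^{i}}\ \sum_{v\in e\subseteq R^{i}}\mathbf 1[e\in E^{i}],
\]
which is a sum of independent indicators. Its mean is $\sum_{i:\,v\in R^{i}}1=Y_{\{v\}}=(1+o(1))n^{0.2}$ by condition (1). Chernoff's bound (Lemma~\ref{random}) with deviation $\lambda=n^{0.15}$ gives failure probability $e^{-\Omega(n^{0.1})}$ for this vertex. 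Since $|V(H)|=\Theta(n)$, a union bound shows that $d_{H''}(v)=(1+o(1))n^{0.2}$ holds for all $v$ with high probability.

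For codegrees, fix a pair $\{u,v\}$. By condition (2), $\{u,v\}$ lies in at most two of the sets $R^{i}$. Within a single $R^{i}$, any edge containing both $u$ and $v$ contains $u$, so the expected number of chosen edges of $E^{i}$ through $\{u,v\}$ is at most $\sum_{u\in e}\omega^{i}(e)=1$. Thus $d_{H''}(\{u,v\})$ is a sum of independent indicators with mean at most $2$, and by Chernoff's bound,
\[
\mathbb P\bigl[d_{H''}(\{u,v\})\ge n^{0.1}\bigr]\le e^{-\Omega(n^{0.1})}.
\]
A union bound over the $O(n^{2})$ pairs then gives $\Delta_{2}(H'')\le n^{0.1}$ with high probability.

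Both properties therefore hold simultaneously with positive probability, and we fix such an outcome $H''$, which is a spanning subgraph of $H$ with the required degree and codegree bounds. The only delicate point is the independence structure. Because the sets $R^{i}$ overlap, one has to check that the same edge is never sampled twice, and this is exactly what condition (3) guarantees. Given that, the indicators feeding each degree count are mutually independent and the concentration arguments go through directly.
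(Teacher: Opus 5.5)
Your proposal is correct and matches the standard argument of Alon et al., which the paper cites without reproducing: select each edge of $H[R^{i}]$ independently with probability $\omega^{i}(e)$, use (3) so that the union is disjoint and $\mathbb{E}[d_{H''}(v)]=Y_{\{v\}}$, then concentrate degrees via (1) and codegrees via (2). One small adjustment: the codegree tail $\mathbb{P}[X\ge n^{0.1}]$ for a sum of independent indicators with mean at most $2$ is not covered by the $0<\delta\le 1$ form of Lemma~\ref{random}, so use a large-deviation bound such as $\mathbb{P}[X\ge t]\le (\mathbb{E}X)^{t}/t!\le 2^{t}/t!$, which easily survives the union bound over $O(n^{2})$ pairs.
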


	Now, we are ready to show Theorem \ref{StableK4}.

\medskip

\noindent{\it Proof of Theorem \ref{StableK4}.}
	Assume that $n$ is sufficiently large and that $G$ is a stable $4$-graph on $[n]$ with $\nu(G)\leq s$ and $e(G)\geq \binom{n}{4}-\binom{n-s}{4}-\varepsilon n^{4}$.
	If $G$ is not $400\varepsilon^{1/4}$-close to the graph $H_{1}$.
	Then we get a $4$-graph $H$ which satisfies the conditions in Construction \ref{Cons1}.

	By Lemma \ref{random1}, we get $R^{i}$ for all $1\leq i\leq n^{1.1}$, which satisfy $(1)$-$(5)$ in Lemma \ref{random1}.
	Therefore, by Lemma \ref{PFM}, for $1\leq i\leq n^{1.1}$, each $H[R^{i}]$ contains a perfect fractional matching.
	Thus, by Lemma \ref{random2}, we get a spanning subgraph $H''$ of $H$, such that $d_{H''}(v)=(1+o(1))n^{0.2}$ for each $v\in V(H)$, and $\Delta_{2}(H'')\leq n^{0.1}$.
	So, using Lemma \ref{NPM}, we have a matching covering all but at most $2\eta n$ vertices of $H$.
	Finally, by the definition of $H$, we have a matching covering all but at most $n-4s-\eta n$ vertices of $G$, implying that $\nu(G)>s$, a contradiction.
	Therefore, $G$ is $400\varepsilon^{1/4}$-close to $H_{1}$. \qedB

\section{Proof of Theorem \ref{EMCK4}}
\noindent We present the proof of Theorem \ref{EMCK4} in this section.

    Since the case $s \leq n/8$ is already covered by Frankl's result \cite{F13}, it suffices to prove Theorem \ref{EMCK4} for $n/8 \leq s \leq n/5$. Set $c = 1/8$ and $\varepsilon=10^{-40}$. Let $n_{0}$ be the integer provided by Theorem \ref{StableK4} for these fixed $c$ and $\varepsilon$. For integers $n>n_{0}$, let $G$ be a $4$-graph on $[n]$ with $e(G)>\binom{n}{4}-\binom{n-s}{4}$. Our aim is to find a matching of size $s+1$ in $G$. Without loss of generality, we may assume that $G$ is stable, by the following useful lemma of Frankl \cite{F87}.

\begin{lem}[Frankl, \cite{F87}]\label{stableF}
	Let $n,k$ be positive integers, and let $H$ be a $k$-graph on $[n]$. Then there exists a stable $k$-graph $H^{*}$ on $[n]$, such that $|E(H^{*})|=|E(H)|$ and that $\nu(H^{*})\leq \nu(H)$. 
\end{lem}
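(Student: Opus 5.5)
We prove this by the classical shifting technique, using the $(i,j)$-shift $S_{ij}$ for $1\le i<j\le n$. For $F\in E(H)$ with $j\in F$, $i\notin F$ and $F'=(F\setminus\{j\})\cup\{i\}\notin E(H)$, we replace $F$ by $F'$; all other edges are left unchanged. The plan is to apply such shifts repeatedly until no shift changes the hypergraph, and to argue that the resulting $k$-graph $H^{*}$ is stable in the sense of Definition~\ref{stable}, has the same number of edges, and has matching number at most $\nu(H)$.

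First, each $S_{ij}$ is injective on edges, so $|E(S_{ij}(H))|=|E(H)|$. Termination follows from the potential $\sum_{F\in E(H)}\sum_{x\in F}x$, which strictly decreases with every nontrivial shift and is a nonnegative integer. A hypergraph fixed by all $S_{ij}$ with $i<j$ is closed under replacing an element by any smaller element not already in the edge. Given $\{a_1<\dots<a_k\}\in E$ and $b_1<\dots<b_k$ with $b_\ell\le a_\ell$, we reach $\{b_\ell\}$ by lowering elements one at a time. We process $\ell=1,\dots,k$, each time replacing $a_\ell$ by $b_\ell$. The new element $b_\ell$ is not already present, since the current set is $\{b_1,\dots,b_{\ell-1},a_\ell,\dots,a_k\}$ and $b_{\ell-1}<b_\ell\le a_\ell$. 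Hence $H^{*}$ is stable.

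The main step is to show $\nu(S_{ij}(H))\le\nu(H)$. Let $M$ be a matching in $H'=S_{ij}(H)$, and call an edge of $M$ \emph{new} if it lies in $E(H')\setminus E(H)$. Every new edge contains $i$, so $M$ has at most one new edge. If there is none, $M$ is already a matching of $H$. Suppose then that $G'\in M$ is new, so $G'=(G\setminus\{j\})\cup\{i\}$ for some $G\in E(H)$ with $i\notin G$ and $j\in G$. If no edge of $M$ contains $j$, then $(M\setminus\{G'\})\cup\{G\}$ is a matching of $H$ of the same size.

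Otherwise some $E\in M$ contains $j$, and $i\notin E$ because $G'$ contains $i$. Since $E\in E(H')$ is not new, $E\in E(H)$. Moreover $E$ survived the shift in $H'$, so $E_1=(E\setminus\{j\})\cup\{i\}\in E(H)$. This is because $E$ contains $j$ but not $i$, and would otherwise have been shifted. Then $(M\setminus\{G',E\})\cup\{G,E_1\}$ is a matching in $H$. To check disjointness: $G\cup E_1=G'\cup E$ as sets, since we swap $i$ and $j$ between the two edges. Also $G\cap E_1=\emptyset$, because $G$ contains $j$ and not $i$, while $E_1$ contains $i$ and not $j$, and the remaining parts $G'\setminus\{i\}$ and $E\setminus\{j\}$ were disjoint. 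Thus $\nu(H')\le\nu(H)$, and iterating this over all shifts gives the lemma. The case analysis in this step, particularly verifying that $E_1\in E(H)$, is the only delicate point.
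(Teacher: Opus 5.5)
Your argument is correct. It is the classical $(i,j)$-shifting proof from Frankl's survey \cite{F87}, which the paper cites for this lemma without giving a proof. Shifts preserve $|E|$, your exchange of $i$ and $j$ between $G'$ and $E$ shows they do not increase $\nu$, and a family fixed by all $S_{ij}$ with $i<j$ is stable. One cosmetic point: in the stability step, the claim that $b_\ell$ is not already present is needed, and true, only when $b_\ell<a_\ell$; when $b_\ell=a_\ell$ no replacement is made.
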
 

    Assume that $\nu(G)\leq s$, then by Theorem \ref{StableK4}, $G$ is $400\varepsilon^{1/4}$-close to $H_{1}$. 
    
    Set $\eta=400\varepsilon^{1/4}=4\times10^{-10}$. For each $i\in[n]$, let $N_{G}(i)$ and $N_{H_{1}}(i)$ denote the collection of edges in $G$ and $H_{1}$ that contains $i$, respectively. Let $A=\{i\in[n]:|N_{H_{1}}(i)\setminus N_{G}(i) |\geq \sqrt{\eta} n^{3}\}$.
    Then we have $|A|\leq 4\sqrt{\eta} n$, otherwise
\begin{displaymath}
    |E(H_{1})\setminus E(G)|\geq\frac{1}{4} \sum_{i\in A} |N_{H_{1}}(i)\setminus N_{G}(i)|>\frac{1}{4}(4\sqrt{\eta}n)\sqrt{\eta}n^{3}=\eta n^{4},
\end{displaymath}
    a contradiction. Let $V_{1}=[s]\setminus A$, $t=|V_{1}|$ and $V_{2}=[n]\setminus V_{1}$.
\begin{cla}\label{M1}
    $G[V_{2}]$ contains a matching of size $s-t+1$.
\end{cla}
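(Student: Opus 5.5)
The plan is to count the edges of $G$ that avoid $V_{1}$ and then apply Frankl's theorem \cite{F13} (the EMC for $n\ge(2s+1)k-s$) to the $4$-graph $G[V_{2}]$. The point is that $G[V_{2}]$ has many edges, while $s-t$ is tiny compared with $|V_{2}|$. The only property of $A$ we need is the size bound $|A|\le 4\sqrt{\eta}\,n$, and we do not need $G[V_{2}]$ to be stable.

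\textbf{Step 1 (notation and sizes).} Write $n':=|V_{2}|=n-t$ and $s':=s-t=|[s]\cap A|\le |A|\le 4\sqrt{\eta}\,n$. Every edge of $G$ either meets $V_{1}$ or lies inside $V_{2}$. The number of $4$-subsets of $[n]$ meeting $V_{1}$ is $\binom{n}{4}-\binom{n-t}{4}$. Using $e(G)>\binom{n}{4}-\binom{n-s}{4}$ we therefore get
\[
e(G[V_{2}])\ \ge\ e(G)-\left(\binom{n}{4}-\binom{n-t}{4}\right)\ >\ \binom{n-t}{4}-\binom{n-s}{4}=\binom{n'}{4}-\binom{n'-s'}{4}.
\]

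\textbf{Step 2 (the case $s'=0$).} If $s'=0$, the right-hand side is $0$, so $G[V_{2}]$ has at least one edge. That edge is a matching of size $1=s-t+1$, and the claim holds.

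\textbf{Step 3 (the case $s'\ge 1$).} Here $s'\le 4\sqrt{\eta}\,n$ with $\eta=4\times 10^{-10}$, while $n'\ge n-s\ge 4n/5$. Hence $n'\ge (2s'+1)\cdot 4-s'=7s'+4$ holds comfortably for large $n$. This is exactly the range in which Frankl \cite{F13} proved the EMC for $4$-graphs. In this range the maximum in \eqref{EMC} equals $\binom{n'}{4}-\binom{n'-s'}{4}$. To see this, note that $\binom{n'}{4}-\binom{n'-s'}{4}$ is of order $s'n'^{3}$, whereas $\binom{4s'+3}{4}$ is of order $s'^{4}$, and $s'\ll n'$.

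Applying Frankl's theorem to the $n'$-vertex $4$-graph $G[V_{2}]$, the strict inequality from Step 1 forces $\nu(G[V_{2}])\ge s'+1=s-t+1$.

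\textbf{Main obstacle.} The only step needing care is checking the hypotheses of \cite{F13}: the inequality $n'\ge 7s'+4$, and the fact that the first term dominates the maximum in \eqref{EMC}. Both follow from $s'=O(\sqrt{\eta}\,n)$ with $\eta$ a fixed tiny constant and $n$ large. No stability or closeness information beyond $|A|\le 4\sqrt{\eta}\,n$ enters this claim.
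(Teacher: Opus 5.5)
Your proposal is correct and follows the paper's argument. You bound the edges meeting $V_{1}$ by $\binom{n}{4}-\binom{n-t}{4}$ to get $e(G[V_{2}])>\binom{n-t}{4}-\binom{n-t-(s-t)}{4}$, then invoke Frankl's theorem \cite{F13} using $s-t\le |A|\le 4\sqrt{\eta}\,n$. Your separate treatment of $s-t=0$ and your explicit check that $n-t\ge 7(s-t)+4$ only spell out what the paper leaves implicit.
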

\begin{proof}[Proof of Claim \ref{M1}]
    The number of edges in $G$ that intersects $V_{1}$ is at most $\binom{n}{4}-\binom{n-t}{4}$. So we have $e(G[V_{2}])>(\binom{n}{4}-\binom{n-s}{4})-(\binom{n}{4}-\binom{n-t}{4})=\binom{n-t}{4}-\binom{n-t-(s-t)}{4}$.
    Note that $t\leq s\leq n/5$, $s-t\leq |A|\leq 4\sqrt{\eta} n$. Since $\sqrt{\eta}=2\times10^{-5}$, by Frankl's result \cite{F13}, $G[V_{2}]$ contains a matching of size $s-t+1$.
\end{proof}

    Let $M_{1}=\{f_{1},f_{2},\dots,f_{s-t+1}\}$ be a matching of size $s-t+1$ in $G[V_{2}]$ and let $U=\bigcup_{j=1}^{s-t+1}f_{j}$. Next we find disjoint edges $e_{1},e_{2},\dots,e_{t}$ in $G$ such that $|e_{i}\cap V_{1}|=1$ and $e_{i}\cap U=\emptyset$ for all $i\in[t]$. 
    Suppose for some $i\in[t-1]$, we have already found such edges $e_{1},e_{2},\dots,e_{i}$. Let $W_{i}=\bigcup_{j=1}^{i}e_{j}$, and $v$ be a vertex in $V_{1}\setminus W_{i}$, the number of edges in $G$ that contains $v$ and intersects $U\cup V_{1}\cup W_{i}$ is at most
    $\binom{n-1}{3}-\binom{n-4(s-t+1)-t-3i}{3}$. By the definition of $V_{1}$, we have $|N_{G}(v)|>\binom{n-1}{3}-\sqrt{\eta}n^{3}$. Since $n-4(s-t+1)-t-3i\geq n-4s-1>n/10$, we have $\binom{n-4(s-t+1)-t-3i}{3}>10^{-4}n^{3}>\sqrt{\eta}n^{3}$, implying $|N_{G}(v)|>\binom{n-1}{3}-\binom{n-4(s-t+1)-t-3i}{3}$. Thus, there exists an edge $e_{i+1}$ in $G$ such that $e_{i+1}\cap V_{1}=\{v\}$ and $e_{i+1}\cap(U\cup W_{i})=\emptyset$.   
    Finally, $e_{1},e_{2},\dots,e_{t}$ and $f_{1},f_{2},\dots,f_{s-t+1}$ form a matching of size $s+1$ in $G$, which completes the proof of Theorem~\ref{EMCK4}. \qedB

\bigskip

We would like to remark that, through a very coarse calculation, it suffices to choose $n_0=200/\varepsilon$ in Theorem~\ref{FStableK4}.
Using this bound, it is enough to take $n_0=10^{10^7}$ in the statement of Theorem~\ref{EMCK4}.

\section{Proof of Theorem~\ref{VDPC}}
\noindent Throughout this section, let \(n,k,d,s\) be positive integers with \((k,d)\in\{(5,1),(6,2)\}\) and \(0 \le s \le n/k - 1\). Let \(G\) be an \(n\)-vertex \(k\)-graph with
$\delta_d(G) \ge \binom{n-d}{k-d} - \binom{n-d-s}{k-d} + 1.$
Our goal is to find a matching in \(G\) of size at least \(s+1\), following an approach similar to those in \cite{LYY21} and \cite{CGHW22}.

First, we define a $k$-graph $H$ as follows.

\begin{cons}\label{Cons2}
	Let $t=\lfloor\frac{n-ks}{k-1}\rfloor-1$ and $T$ be a $t$-set. Then $H$ is the $k$-graph on $T\cup V(G)$ with $E(H)=E(G)\cup\{e\in\binom{V(G)\cup T}{k} : e\cap T\neq\emptyset\}$.
\end{cons}

\noindent It is easy to verify that the $k$-graph $H$ in Construction~\ref{Cons2} satisfies $\nu(H) = \nu(G) + t$ and $\delta_d(H) \ge \binom{n+t-d}{k-d} - \binom{n-d-s}{k-d} + 1$, where $t+s = \lfloor (n+t)/k \rfloor - 1$. Replacing $G$ with $H$, it suffices to consider the case $s = \lfloor n/k \rfloor - 1$. We then aim to find a matching covering all but at most $k-1$ vertices in $G$.

    To achieve this, we first consider the case when $G$ is close to the extremal graph and then deal with the case when $G$ is far from the extremal graph. Note that the graph $H(U,W)$ defined in Definition \ref{EX} is not the unique extremal graph. 

\begin{defi}\label{EX2}
    Given positive integers $1\leq p\leq k$, let $H^{p}(U,W)$ be the $k$-graph on vertex set $U\cup W$ with edge set $\{e\in\binom{U\cup W}{k}:1\leq|e\cap U|\leq p\}$.
\end{defi}

    Observe that $H(U,W)=H^{k}(U,W)$. For $|U|\leq|W|/(k-1)$, and $k-d\leq p\leq k$, we have $\nu(H(U,W))=\nu(H^{p}(U,W))=|U|$ and $\delta_{d}(H(U,W))=\delta_{d}(H^{p}(U,W))$. Thus, all these graphs are extremal graphs for the problem. If we further assume that the extremal graph has the minimum number of edges, then $H^{k-d}(U,W)$ will be the unique extremal graph. In what follows, we refer to $H^{k-d}(U,W)$ as the extremal graph.
    
	Returning to the proof of Theorem \ref{VDPC}. The case when $G$ is close to the extremal graph $H^{k-d}(U,W)$ for some partition $V(G)=U\cup W$ with $|U|=n/k$ has already been proved in \cite{LYY21} (See Lemma 2.3 in Section 2). For the case when $G$ is far from the extremal graph (i.e., $G$ is not $\zeta$-close to $H^{k-d}(U,W)$ for any partiton $V(G)=U\cup W$ with $|U|=n/k$, where $\zeta$ is a suitably chosen parameter), we will use the following absorbing lemma to find a powerful small matching $M_{abs}$ in $G$ and then use Lemma \ref{NPM} to find an almost perfect matching in the remaining graph.

\begin{lem}[Absorbing Lemma,\cite{HPS09}]\label{Abs}
	 For all $\gamma>0$ and integers $k>d>0$, there exists an $n_{0}$ such that for all integers $n>n_{0}$ the following holds: suppose that $H$ is a $k$-graph on $n$ vertices with $\delta_{d}(H)\geq(1/2+2\gamma)\binom{n-d}{k-d}$, then there exists a matching $M=M_{abs}$ in $H$, such that $|M|<\gamma^{k}n/k$, and that for every set $S\subseteq V(H)\setminus V(M)$ of size at most $\gamma^{2k}n$ and divisible by $k$, there exists a matching in $H$ covering exactly the vertices of $V(M)\cup S$.
\end{lem}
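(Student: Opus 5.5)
The plan is the standard absorbing method of Rödl, Ruciński and Szemerédi, in the form used by Hàn, Person and Schacht. First I show that every $k$-set of vertices has $\Omega(n^{k^2})$ small \emph{absorbing sets}. Then I select a random family of such sets and take $M$ to be the union of their internal perfect matchings.

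\textbf{Absorbers.} For $S\in\binom{V(H)}{k}$, call $T\in\binom{V(H)\setminus S}{k^2}$ an \emph{$S$-absorber} if both $H[T]$ and $H[T\cup S]$ have perfect matchings. I build absorbers for $S=\{v_1,\dots,v_k\}$ as follows.
\begin{enumerate}[(1)]
\item Choose an edge $e=\{u_1,\dots,u_k\}$ disjoint from $S$. There are $\Omega(n^k)$ choices, since $\delta_d(H)\ge \frac12\binom{n-d}{k-d}$ forces $e(H)=\Omega(n^k)$.
\item For each $i$, choose a $(k-1)$-set $W_i$, disjoint from everything chosen so far, such that $W_i\cup\{u_i\}\in E(H)$ and $W_i\cup\{v_i\}\in E(H)$.
\end{enumerate}
To find $W_i$, pick any $(d-1)$-set $D$ avoiding the used vertices. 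The links of the two $d$-sets $D\cup\{u_i\}$ and $D\cup\{v_i\}$ are $(k-d)$-graphs on essentially the same ground set. Each has more than $(\frac12+2\gamma)\binom{n-d}{k-d}$ edges, so by pigeonhole they share at least $4\gamma\binom{n-d}{k-d}-O(n^{k-d-1})\ge 3\gamma\binom{n-d}{k-d}$ sets $Q$. Put $W_i=D\cup Q$; this gives $\Omega(\gamma n^{k-1})$ choices for each $W_i$. The restriction to previously unused vertices removes only $O(n^{k-2})$ options.

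Set $T=e\cup W_1\cup\dots\cup W_k$. Then $\{W_i\cup\{u_i\}\}_{i}$ is a perfect matching of $H[T]$, and $\{e\}\cup\{W_i\cup\{v_i\}\}_i$ is a perfect matching of $H[T\cup S]$. Each $T$ arises from a bounded number of choice sequences, so every $S$ has at least $c_0 n^{k^2}$ absorbers, where $c_0=c_0(\gamma,k)>0$.

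\textbf{Random selection.} Choose a family $\mathcal F\subseteq\binom{V}{k^2}$ by including each $k^2$-set independently with probability $p=\xi n^{1-k^2}$, where $\xi\ll\gamma$ is small. Chernoff's bound and Markov's inequality give, with positive probability, all of the following:
\begin{itemize}
\item $|\mathcal F|\le 2\xi n$;
\item every $S$ has at least $\frac{c_0\xi}{2}n$ absorbers in $\mathcal F$;
\item the number of intersecting pairs in $\mathcal F$ is at most $O(\xi^2 n)$.
\end{itemize}
Delete one member of each intersecting pair, and delete every member of $\mathcal F$ that is not an absorber for any $k$-set. The result is a family $\mathcal F'$ of pairwise disjoint sets in which every $S$ still has at least $\frac{c_0\xi}{4}n$ absorbers, provided $\xi\ll c_0$.

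Let $M$ be the union of the perfect matchings of $H[T]$ over $T\in\mathcal F'$. Then $|M|\le k|\mathcal F'|\le 2k\xi n<\gamma^k n/k$.

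\textbf{Absorbing a leftover set.} Given $S\subseteq V\setminus V(M)$ with $|S|\le\gamma^{2k}n$ and $k\mid |S|$, split $S$ into at most $\gamma^{2k}n/k$ disjoint $k$-sets. Assign them greedily to distinct absorbers in $\mathcal F'$. This succeeds provided $\gamma^{2k}n/k<\frac{c_0\xi}{4}n$. For each assigned $T$, replace its matching by a perfect matching of $H[T\cup S_j]$. Keep the original matching on every unassigned $T$. The result is a matching covering exactly $V(M)\cup S$.

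\textbf{Main obstacle.} The difficulty is the parameter bookkeeping. The requirement $|M|<\gamma^k n/k$ needs $\xi\lesssim\gamma^k$, while absorbing $\gamma^{2k}n$ vertices needs $c_0\xi\gtrsim\gamma^{2k}$. So I need $c_0$ to be at least of order $\gamma^{k}$. I must therefore track $c_0$ explicitly in the counting step: roughly $c_0\approx(3\gamma)^k$ times a constant depending only on $k$, coming from the $k$ choices of $W_i$. If this constant is too weak, I would use the slack in the hypothesis ($2\gamma$ rather than $\gamma$) to gain more room. Failing that, I would allow the final constant to absorb a $k$-dependent factor by taking $n_0$ large and rescaling $\xi$.
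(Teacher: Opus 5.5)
The paper gives no proof of this lemma; it is quoted from \cite{HPS09}. Your skeleton is the standard H\`an--Person--Schacht argument: absorbing $k^2$-sets $e\cup W_1\cup\dots\cup W_k$ with $W_i$ in the common link of $u_i$ and $v_i$, a random family, deletion of overlapping pairs, and greedy absorption. The gap is the step you postpone. Because the statement fixes the constants $\gamma^k n/k$ and $\gamma^{2k}n$, that bookkeeping is the real content, and your chain of estimates does not close as written.

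\textbf{Why your estimates fail.} Your bound $|\mathcal F|\le 2\xi n$ discards the factor $1/(k^2)!$ in $\mathbb{E}|\mathcal F|=p\binom{n}{k^2}\approx \xi n/(k^2)!$. With that bound, $|M|<\gamma^k n/k$ forces $\xi<\gamma^k/(2k^2)$. Then even the best constant your construction certifies, $c_0\approx(4\gamma)^k/(2(k^2)!)$, gives only about $\frac{4^k}{16k^2(k^2)!}\gamma^{2k}n$ absorbers per $k$-set. That is far below the $\gamma^{2k}n/k$ you need.

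Nor can you make overlaps negligible by imposing $\xi\ll c_0$. Write $\theta=\mathbb{E}|\mathcal F|/n$ and $\rho$ for the certified density of $S$-absorbers among $k^2$-sets, which is only $\Theta_k(\gamma^k)$. You need both $\theta<\gamma^k/k^2$ and $\rho\theta\gtrsim\gamma^{2k}/k$, so $\theta=\Theta_k(\gamma^k)$. The expected number of intersecting pairs, about $\frac{k^4}{2}\theta^2 n$, is then of the same order $\gamma^{2k}n$ as the target. Your fallbacks do not help: $n_0$ has no effect on constant factors, and the slack $2\gamma$ is exactly what produces the common-link bound.

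\textbf{How to close it.} You need an exact count.
\begin{enumerate}[(1)]
\item By averaging, $\delta_1(H)\ge(\frac12+2\gamma)\binom{n-1}{k-1}$. So the common link of $u_i,v_i$, restricted to sets avoiding the $O(k^2)$ used vertices, has at least $(4\gamma-o(1))\binom{n}{k-1}$ members. Keep the full $4\gamma$.
\item There are at least $(\frac12-o(1))n^k$ ordered edges $(u_1,\dots,u_k)$ disjoint from $S$.
\item Each $k^2$-set arises from at most $(k^2)!/((k-1)!)^k$ sequences $(u_1,\dots,u_k,W_1,\dots,W_k)$.
\end{enumerate}
Hence every $k$-set $S$ has at least $(\frac12-o(1))(4\gamma)^k\binom{n}{k^2}$ absorbers.

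Now choose $p$ with $p\binom{n}{k^2}=\theta n$ for $\theta=\gamma^k/(2k^2)$. Use Chernoff with $(1\pm o(1))$ factors rather than factors of $2$; your factor-$2$ losses are too generous when $k=2$. Use Markov for overlaps. With positive probability:
\begin{enumerate}[(1)]
\item $|\mathcal F|\le(1+o(1))\theta n$;
\item every $S$ has at least $(1-o(1))\frac{4^{k-1}}{k^2}\gamma^{2k}n$ absorbers in $\mathcal F$;
\item at most $(\frac14+o(1))\gamma^{2k}n$ pairs in $\mathcal F$ intersect.
\end{enumerate}
After the deletions, $|M|\le(\frac12+o(1))\gamma^k n/k<\gamma^k n/k$. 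Every $S$ keeps at least $(\frac{4^{k-1}}{k^2}-\frac14-o(1))\gamma^{2k}n\ge\gamma^{2k}n/k$ absorbers for every $k\ge2$, which is exactly what your greedy step requires.
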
	

	We apply Lemma \ref{NPM} here in a manner similar to its use in the previous section. We also need to show that, with high probability, the random subgraph of $H:=G\setminus V(M_{abs})$ contains a perfect fractional matching. Note that when $G$ is far from the extremal graph, choosing $\gamma$ sufficiently small ensures that $H$ is also far from the extremal graph. The following lemmas show that when $H$ is not close to the extremal graph, we have an upper bound of the independence number $\alpha(H)$ of $H$ and this property persists with high probability after randomization.

\begin{defi}
	Suppose that $k\geq3$ and $H$ is a $k$-graph. Let $\zeta>0$, and let $\mathcal{F}$ be a family of subsets of $V(H)$. We say that $H$ is $(\mathcal{F},\zeta)$-dense if $e(H[A])\geq \zeta e(H)$ for every $A\in \mathcal{F}$.
\end{defi}

\begin{lem}[Lemma 5.3 in \cite{LYY21}]\label{Ind}
	Let $k,d$ be integers with $k\geq 2$ and $d\in[k-1]$. Let $0<\zeta \ll 1$ and $\varepsilon\leq \zeta/8$. Let $n$ be a sufficiently large integer. Suppose $H$ is a $k$-graph with order $n$ such that $\delta_{d}(H)>\binom{n-d}{k-d}-\binom{n-d-n/k}{k-d}-\varepsilon n^{k-d}$, and $H$ is not $\zeta$-close to $H^{k-d}(U,W)$ for any partition of $V(H)$ in to $U,W$ with $|U|=n/k$. Then $H$ is $(\mathcal{F},\zeta/(2k!))$-dense, where $\mathcal{F}=\{A\subseteq V(H) : |A|\geq(1-1/k-\zeta/4)n\}$.
\end{lem}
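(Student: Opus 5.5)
The idea is to argue by contrapositive. I would find a small set $S$ such that $e(H[V\setminus S])$ is small, and then conclude that $H$ is $\zeta$-close to $H^{k-d}(S,V\setminus S)$ for a suitable partition with $|U|=n/k$.

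Suppose some $A\subseteq V(H)$ with $|A|\ge(1-1/k-\zeta/4)n$ has $e(H[A])<\frac{\zeta}{2k!}e(H)$. Since $e(H)\le\binom nk$, this means $H[A]$ has fewer than $\frac{\zeta}{2k!}n^k$ edges. Shrinking $A$ only decreases $e(H[A])$, so I may assume $|A|=\lceil(1-1/k-\zeta/4)n\rceil$. I then choose $U\supseteq V\setminus A$ with $|U|=n/k$ by moving arbitrary vertices of $A$ into $U$. This is possible because $|V\setminus A|\le (1/k+\zeta/4)n$. Actually I would first trim so that $|V\setminus A|$ is close to $n/k$. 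If $|V\setminus A|>n/k$, I instead remove a few vertices from $U$ back into $W:=V\setminus U$. Then $W$ differs from $A$ by at most $\zeta n/4+1$ vertices, and the extra edges of $H[W]$ number at most $(\zeta/4)n\binom{n}{k-1}$. This error is too large compared with $\zeta n^k$ unless it is handled carefully. I would control it instead via the codegree condition: almost all edges meeting those few vertices lie in $H^{k-d}$ anyway, so only edges entirely inside $W$ count against us.

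The main step is to bound $|E(H^{k-d}(U,W))\setminus E(H)|$. I would double count using $d$-sets $D\subseteq W$. Each such $D$ has degree greater than $\binom{n-d}{k-d}-\binom{n-d-n/k}{k-d}-\varepsilon n^{k-d}$. The number of $(k-d)$-sets completing $D$ with at least one vertex in $U$ is exactly $\binom{n-d}{k-d}-\binom{|W|-d}{k-d}$, which matches the threshold up to $O(n^{k-d-1})$. So the number of such completions missing from $H$ is at most $\varepsilon n^{k-d}+\deg_{H[W]}(D)+O(n^{k-d-1})$.

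Summing over $D\subseteq W$ gives
\[
\sum_{D}\bigl|\{\text{missing crossing completions of }D\}\bigr|\le \varepsilon n^{k}+\tbinom kd e(H[W])+o(n^k),
\]
which is $O((\varepsilon+\zeta/(2k!))\,n^k)$. Each missing edge $e$ of $H^{k-d}(U,W)$ satisfies $|e\cap U|\le k-d$, so it contains at least one $d$-subset inside $W$. Hence it is counted at least once in this sum. The total missing count is therefore below $\zeta n^k$, using $\varepsilon\le\zeta/8$ and $\binom kd\frac{\zeta}{2k!}\le\zeta/2$. This contradicts the assumption that $H$ is not $\zeta$-close.

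The main obstacle is the size adjustment between $A$ and $W$ when $|V\setminus A|\ne n/k$. I would treat it by setting $W:=A$ when $|A|\ge n-n/k$. In that case $|W|$ exceeds $n-n/k$ by at most $\zeta n/4$, and the degree comparison changes by $O(\zeta)\binom{n-d}{k-d-1}n$. This is not $o$, so I would tune the constants. Alternatively, I would pick the $\zeta n/4$ vertices moved to be those of smallest degree in $H[A]$, so that $e(H[W])\le e(H[A])$.
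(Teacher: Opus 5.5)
The paper quotes this lemma from \cite{LYY21} without proof, so I am judging your argument on its own. Your core double count is correct and is essentially the whole proof. If $|U|=n/k$ and $|W|=n-n/k$, then for every $D\in\binom{W}{d}$ the number of missing completions $T$ with $T\cap U\neq\emptyset$ is \emph{exactly} $\binom{n-d}{k-d}-\binom{n-n/k-d}{k-d}-\deg_H(D)+\deg_{H[W]}(D)<\varepsilon n^{k-d}+\deg_{H[W]}(D)$. Every edge of $H^{k-d}(U,W)$ contains a $d$-subset of $W$, so
\[
|E(H^{k-d}(U,W))\setminus E(H)|<\varepsilon n^{k}+\binom{k}{d}e(H[W]).
\]
The gap is in the step you yourself call the main obstacle. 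You never produce a $W$ with $|W|=n-n/k$ exactly together with a usable bound on $e(H[W])$, and each remedy you offer fails:
\begin{itemize}
\item Shrinking $A$ to size $\lceil(1-1/k-\zeta/4)n\rceil$ throws away the easy case and forces $|A|<n-n/k$.
\item Setting $W:=A$ when $|A|\ge n-n/k$ generally gives $|U|\neq n/k$, where the non-closeness hypothesis says nothing. It also contradicts your normalization.
\item Choosing smallest-degree vertices only makes sense when vertices are removed from $A$, and then $e(H[W])\le e(H[A])$ holds for any choice. When vertices must be added, it gives nothing.
\item The codegree remark is not an estimate. Edges of $H[W]$ through $W\setminus A$ lie inside $W$, so they enter your sum via $\deg_{H[W]}(D)$ and must be paid for.
\end{itemize}

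The fix is to keep both cases and use the crude bound you dismissed. If $|A|\ge n-n/k$, take $W\subseteq A$ with $|W|=n-n/k$, so $e(H[W])\le e(H[A])$. If $|A|<n-n/k$, take $W\supseteq A$ with $|W|=n-n/k$. Then $|W\setminus A|\le \zeta n/4$ and $e(H[W])\le e(H[A])+\frac{\zeta n}{4}\binom{n}{k-1}$. Now keep the factor $1/k!$ from $e(H)\le\binom{n}{k}$. We have $\binom{k}{d}/k!=1/(d!(k-d)!)\le 1$, and $\binom{k}{d}/(k-1)!=k/(d!(k-d)!)\le 2$ because $d!(k-d)!\ge\max\{d,k-d\}\ge k/2$. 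Hence
\[
\binom{k}{d}e(H[W])<\binom{k}{d}\frac{\zeta}{2k!}\cdot\frac{n^k}{k!}+\binom{k}{d}\frac{\zeta}{4}\cdot\frac{n^k}{(k-1)!}\le\frac{\zeta}{4}n^k+\frac{\zeta}{2}n^k .
\]
Together with $\varepsilon n^k\le\frac{\zeta}{8}n^k$, the number of missing edges is below $\frac{7}{8}\zeta n^k$. This contradicts non-closeness.

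With your bookkeeping, $e(H[A])<\frac{\zeta}{2k!}n^k$ and $\binom{k}{d}\frac{\zeta}{2k!}\le\frac{\zeta}{2}$, the three terms sum to $\frac{9}{8}\zeta$ when $k=2$. That may be why the error looked too large. The only care needed is not to discard $\binom{n}{k}\le n^k/k!$.
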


\begin{lem}[Lemma 5.4 in \cite{LYY21}]\label{IndRan}
	 Let $c,\eta,\rho$ be positive real numbers and let $k,n$ be positive integers. Let $H$ be an $n$-vertex $k$-graph such that $e(H)\geq cn^{k}$ and $e(H[S])\geq \eta e(H)$ for all $S\subseteq V(H)$ with $|S|\geq \rho n$. Let $R\subseteq V(H)$ be obtained by taking each vertex of H independently and uniformly at random with probability $n^{-0.9}$. Then, for any positive real number $\theta \ll \rho$, the independence number of $H[R]$ is at most $(\rho+\theta+o(1))n^{0.1}$ with probability at least $1 - n^{O(1)}e^{-\Omega(n^{0.1})}$.
\end{lem}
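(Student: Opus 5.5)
The plan is to follow the standard argument for random induced subhypergraphs: a union bound over all large sets, with each set controlled by a concentration inequality for the number of edges it spans. Set $p=n^{-0.9}$ and $r=(\rho+\theta)n^{0.1}$. By Chernoff's bound (Lemma~\ref{random}), $|R|=(1+o(1))n^{0.1}$ with probability $1-e^{-\Omega(n^{0.1})}$.

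The core step is to show that, with the required probability, every set $S\subseteq V(H)$ with $|S|\ge \rho n$ satisfies $e(H[S\cap R])>0$ whenever $|S\cap R|$ is large. It is cleaner to work with subsets of $R$ directly. Suppose $I\subseteq R$ is independent in $H[R]$ with $|I|\ge r$. Then the set $S=I\cup(V(H)\setminus R)$ has no edge lying inside $I$.

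To turn this into a union bound, I would discretize. First partition $V(H)$ arbitrarily into $N=n^{0.05}$ (say) parts $P_1,\dots,P_N$ of equal size. Chernoff's bound gives $|P_j\cap R|=(1+o(1))n^{0.1}/N$ for all $j$, simultaneously with probability $1-n^{O(1)}e^{-\Omega(n^{0.05})}$. Since this error is too weak, I would instead use $N=n^{0.01}$ parts to keep the exponent close to $0.1$; the precise choice is a routine detail.

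An independent set $I$ of size at least $(\rho+\theta)n^{0.1}$ then meets many parts substantially. Let $J$ be the union of the parts in which $I$ occupies at least a $(1-\theta/4)$-fraction of $P_j\cap R$. Counting shows that $|J|\ge(\rho+\theta/2)n$. There are only $2^N=e^{o(n^{0.1})}$ choices of $J$, and $e(H[J])\ge\eta e(H)\ge\eta c n^k$ by hypothesis.

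It remains, for each fixed $J$, to bound the probability that $H[J\cap R]$ has an independent set missing at most a $\theta/4$-fraction of each part. The number of edges of $H[J]$ landing in $R$ has mean $\ge \eta c n^{k}p^k=\Omega(n^{0.1k})$. By Janson's inequality these edges are spread out, so every set obtained by deleting a $\theta/4$-fraction of $J\cap R$ still contains an edge. This holds because a deletion of $\theta n^{0.1}/4$ vertices kills at most $O(\theta)$ times the total number of edges, if degrees in $H[J\cap R]$ are controlled. That control follows from a Chernoff-type bound on vertex degrees in $R$, union-bounded over all $n$ vertices. This gives failure probability $e^{-\Omega(n^{0.1})}$ per $J$. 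The union bound over the $e^{o(n^{0.1})}$ choices of $J$, plus the $n^{O(1)}$ degree events, yields the stated probability $1-n^{O(1)}e^{-\Omega(n^{0.1})}$.

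I expect the main obstacle to be this last step. Deleting a small fraction of $R$-vertices could destroy all edges if the edges concentrate on few vertices. Here $\theta\ll\rho$ is used: first pass to a subhypergraph of $H[J]$ with bounded maximum degree, of order $O(n^{k-1})$, while retaining $\Omega(n^k)$ edges. This is possible by greedily removing edges at high-degree vertices, and it keeps the destroyed-edge count at $O(\theta)\,e$, smaller than the total.
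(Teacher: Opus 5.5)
There is a genuine gap at the step ``Counting shows that $|J|\ge(\rho+\theta/2)n$.'' Counting alone gives nothing of the kind. Write $x$ for the fraction of $R$ lying in the parts that form $J$. Then $|I|\ge(\rho+\theta)|R|$ only yields (up to $o(1)$ terms) $\rho+\theta\le 1-\frac{\theta}{4}(1-x)$. This is vacuous unless $1-\rho<5\theta/4$, whereas in the application $\rho\approx 1-1/k$. Concretely, a set $I$ taking a $(\rho+\theta)$-fraction of every $P_j\cap R$ has $J=\emptyset$. Independence of $I$ in $H[R]$ does nothing to prevent this, because your partition is chosen without reference to $H$. An independent set of $H[R]$ has no reason to be close to a union of blocks of a partition fixed in advance. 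So the $2^N$ events you union-bound over are the wrong events, and tuning $N$ cannot repair this.

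Two smaller points:
\begin{itemize}
\item With $N\to\infty$ parts, each $|P_j\cap R|$ has mean $n^{0.1}/N$, so Chernoff only gives failure probability $e^{-\Omega(n^{0.1}/N)}$. That falls short of the claimed $e^{-\Omega(n^{0.1})}$.
\item The obstacle in your last paragraph is not real. Every vertex has degree at most $\binom{|R|-1}{k-1}$ in $H[R]$, so deleting $\theta|R|/4$ vertices destroys at most about $\frac{k\theta}{4}\binom{|R|}{k}$ edges. This is fewer than the roughly $\eta c\,k!\binom{|R|}{k}$ edges of $H[J\cap R]$ once $\theta$ is small compared with $\eta c$.
\end{itemize}

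The missing idea is to encode each independent set of $H[R]$ by a set of vertices of $H$ that depends on $H$ but not on $R$, drawn from a family of only $n^{O(1)}$ sets. This is what the factor $n^{O(1)}$ in the statement reflects, and it is what hypergraph containers (or an iterated Kleitman--Winston argument) provide in the dense setting.

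Because $e(H)\ge cn^k$, the container lemma applies with $\tau=K/n$ for a constant $K=K(k,c,\eta)$. So every independent set $I$ of $H$ has a fingerprint $T\subseteq I$ with $|T|=O(1)$ and $I\subseteq C(T)$. Here $C(T)$ depends only on $T$ and satisfies $e(H[C(T)])<\eta e(H)$, so the hypothesis forces $|C(T)|<\rho n$. If you use the version that only shrinks the container by a constant factor, iterate it while the current container has at least $\rho n$ vertices. By hypothesis it then still spans at least $\eta cn^k$ edges, so each round costs $O(1)$ fingerprint vertices and $O(1)$ rounds suffice.

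Any independent set of $H[R]$ is independent in $H$, so $\alpha(H[R])\le\max_T|C(T)\cap R|$, with $T$ ranging over the $n^{O(1)}$ vertex sets of bounded size. For each fixed $T$, Chernoff gives $\mathbb{P}[|C(T)\cap R|\ge(\rho+\theta)n^{0.1}]\le e^{-\Omega(\theta^2n^{0.1})}$. A union bound then gives the lemma with probability $1-n^{O(1)}e^{-\Omega(n^{0.1})}$. The paper does not reprove this lemma; it quotes it from \cite{LYY21}.
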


	The next lemma provides other necessary properties of the random subgraph of $H$.

\begin{lem}[Lemma 5.5 in \cite{LYY21}]\label{random3}
	Let $k>d>0$ be integers with $k\geq 3$ and let $H$ be a $k$-graph on $n$ vertices. Let $R$ be chosen from $V(H)$ by taking each vertex uniformly at random with probability $n^{-0.9}$, and then arbitrarily deleting less than $k$ vertices so that $|R|\in k\mathbb{Z}$. Take $n^{1.1}$ independent copies of $R$ and denote them by $R^{i}$, $1\leq i\leq n^{1.1}$. For each $A\subseteq V(H)$ with $|A|\leq k$, let $Y_{A}=|\{i : A\subseteq R^{i}\}|$ and $\textrm{DEG}^{i}_{A}=|N_{H}(A)\cap \binom{R^{i}}{k-|A|}|$.
	Then, with probability $1-o(1)$, all of the following statements hold:
\begin{enumerate}[(1)]
	\item For every $v\in V(H)$,$Y_{\{ v\}}=(1+o(1))n^{0.2}$.
	\item $Y_{\{ u,v\}}\leq 2$ for every pair $\{u,v\}\subseteq V(H)$.
	\item $Y_{e}\leq 1$ for every edge $e\in E(H)$.
	\item For all $1\leq i\leq n^{1.1}$, we have $|R^{i}|=(1+o(1))n^{0.1}$.
	\item If $\varepsilon$ is a small constant with $0<\varepsilon\ll 1$, $\delta_{d}(H)\geq\binom{n-d}{k-d}-\binom{n-d-n/k}{k-d}-\varepsilon n^{k-d}$, then for all $i=1,\dots,n^{1.1}$, all $D\in\binom{V(H)}{d}$ and any positive real number $\xi \geq 2\varepsilon$, we have
	$$\textrm{DEG}^{i}_{D}>\binom{|R^{i}|-d}{k-d}-\binom{|R^{i}|-d-|R^{i}|/k}{k-d}-\xi |R^{i}|^{k-d}.$$
\end{enumerate}
\end{lem}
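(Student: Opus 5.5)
The statement to prove is Lemma~\ref{random3}. Items (1)--(4) are proved exactly as the corresponding items of Lemma~\ref{random1}. Item (5) follows by first computing the expectation of $\textrm{DEG}^{i}_{D}$ and then applying a concentration inequality together with a union bound.

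\textbf{Items (1)--(4).} Each $Y_A$ is distributed as $\textrm{Bi}(n^{1.1},n^{-0.9|A|})$, and removing fewer than $k$ vertices from each copy changes nothing relevant.
\begin{itemize}
\item For (1), $\mathbb{E}[Y_{\{v\}}]=n^{0.2}$. Chernoff's bound (Lemma~\ref{random}) with deviation $n^{0.15}$ gives failure probability $e^{-\Omega(n^{0.1})}$ per vertex, and a union bound over the $n$ vertices finishes it.
\item For (2), the expected number of pairs with $Y_{\{u,v\}}\ge 3$ is at most $n^{2}(n^{1.1})^{3}n^{-5.4}=O(n^{-0.1})$.
\item For (3), for each $3\le \ell\le k$ the expected number of $\ell$-sets with $Y_A\ge 2$ is at most $n^{\ell}n^{2.2}n^{-1.8\ell}=O(n^{2.2-0.8\ell})=o(1)$.
\item Markov's inequality then gives (2) and (3) with probability $1-o(1)$.
\item For (4), $|R^i|$ is binomial with mean $n^{0.1}$. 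Chernoff with deviation $n^{0.095}$ and a union bound over the $n^{1.1}$ copies suffice.
\end{itemize}

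\textbf{Item (5), expectation.} Fix $i$ and $D$, and condition on $D\subseteq R^{i}$; otherwise $D$ is irrelevant to $R^i$. Each $(k-d)$-set in $N_H(D)$ survives in $R^i$ with probability $n^{-0.9(k-d)}$. Hence
\[
\mathbb{E}[\textrm{DEG}^i_D]=|N_H(D)|\,n^{-0.9(k-d)}\ge\left(\binom{n-d}{k-d}-\binom{n-d-n/k}{k-d}-\varepsilon n^{k-d}\right)n^{-0.9(k-d)}.
\]
Condition also on $|R^i|=(1+o(1))n^{0.1}$, which holds by (4). The scaled main term $\bigl(1-(1-1/k)^{k-d}\bigr)n^{k-d}/(k-d)!$ then becomes $\binom{|R^i|-d}{k-d}-\binom{|R^i|-d-|R^i|/k}{k-d}$ up to an additive $o(|R^i|^{k-d})$. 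The error term becomes $(1+o(1))\varepsilon|R^i|^{k-d}$. Since $\xi\ge 2\varepsilon$, there remains a slack of roughly $\varepsilon|R^i|^{k-d}=\Theta(n^{0.1(k-d)})$.

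\textbf{Item (5), concentration (main step).} Show that $\textrm{DEG}^i_D$ is unlikely to fall more than this slack below its mean.
\begin{itemize}
\item Write $\textrm{DEG}^i_D=\sum_{f\in N_H(D)}X_f$, where $X_f$ indicates $f\subseteq R^i$, and apply Janson's inequality exactly as in the proof of Lemma~\ref{random1}(5).
\item The mean is $\Theta(n^{0.1(k-d)})$.
\item The correlation term is $\Delta=\sum_{f\cap f'\neq\emptyset}\mathbb{P}[X_f=X_{f'}=1]\le\sum_{\ell\ge1}O\bigl(n^{2(k-d)-\ell}\,n^{-0.9(2(k-d)-\ell)}\bigr)$. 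This is dominated by $\ell=1$, giving $O(n^{0.2(k-d)-0.1})$.
\item So $\mathbb{E}^2/\Delta=\Omega(n^{0.1})$. With $\gamma$ a small constant depending on $\varepsilon$, the lower-tail probability is $e^{-\Omega(n^{0.1})}$.
\end{itemize}

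\textbf{Union bound and the main obstacle.} Take a union bound over $i\le n^{1.1}$ and $D\in\binom{V(H)}{d}$; the total failure probability is $n^{O(1)}e^{-\Omega(n^{0.1})}=o(1)$. The only delicate point is handling all $\xi\ge2\varepsilon$ simultaneously. It suffices to prove the inequality for $\xi=2\varepsilon$, since the right-hand side decreases as $\xi$ grows.

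The main obstacle is matching the rescaled binomial differences to the $|R^i|$-version with $o(|R^i|^{k-d})$ error while keeping a positive slack of order $\varepsilon$. This needs $\gamma$ chosen small relative to $\varepsilon$ and the $o(1)$ in (4) to be genuinely small; the $n^{0.095}$ deviation bound makes it $O(n^{-0.005})$.
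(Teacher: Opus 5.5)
Your proposal is correct and is essentially the argument the paper uses. The paper only cites \cite{LYY21} for this lemma, but it proves the analogous Lemma~\ref{random1} exactly this way: Chernoff's bound and Markov's inequality for (1)--(4), then Janson's inequality with $\Delta=O(n^{0.2(k-d)-0.1})$ and a union bound over $i$ and $D$ for (5).

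Two small points. First, (5) is claimed for every $D\in\binom{V(H)}{d}$, not only for $D\subseteq R^i$; since the members of $N_H(D)$ are disjoint from $D$, your estimate holds without the conditioning, so this costs nothing. Second, you say the final deletion of fewer than $k$ vertices is harmless, and the paper says the same; this is safe for (2)--(5) but not literally for (1). An adversarial deletion rule could remove a fixed vertex from a constant fraction of the copies containing it, so the deleted vertices should be chosen in a balanced way, for instance uniformly at random.
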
	

	Subsequently, the following lemma shows that with the lower bound of $\delta_{d}(H[R])$ and the upper bound of $\alpha(H[R])$, we can prove that $H[R]$ contains a perfect fractional matching.

\begin{lem}\label{PFM2}
	Let $k,d$ be integers with $(k,d)\in\{(5,1),(6,2)\}$, and let $\zeta,\xi$ be positive reals with $\xi \ll \zeta$. Let $n$ be a positive integer such that $n$ is sufficiently large and $n\in k\mathbb{Z}$. Let $H$ be a $k$-graph of order $n$ such that $\delta_{d}(H)>\binom{n-d}{4}-\binom{n-d-n/k}{4}-\xi n^{4}$ and $\alpha(H)\leq (1-1/k-\zeta/5)n$. Then $H$ contains a perfect fractional matching.
\end{lem}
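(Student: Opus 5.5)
We argue by contradiction via LP duality. Suppose $H$ has no perfect fractional matching, so $\tau^{*}(H)=\nu^{*}(H)<n/k$. We will build from $H$ a stable $4$-graph on which Theorem~\ref{FStableK4} can be applied.

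\textbf{Step 1: passing to a $4$-graph.} Fix a $d$-set $D$ and consider its link $L=\{e\setminus D: D\subseteq e\in E(H)\}$, a $4$-graph on $n-d$ vertices (note $k-d=4$ in both cases). The plan is to pick a minimum fractional vertex cover $\omega$ of $H$ with $\sum_v\omega(v)<n/k$, and to choose $D$ to consist of $d$ vertices of smallest weight; since $\omega$ has average weight below $1/k$, we get $\omega(D)\le d\,\bar\omega$. Then $\omega'=\omega/(1-\omega(D))$ restricted to $V\setminus D$ is a fractional cover of $L$. Hence
\[
\nu^{*}(L)\le \frac{n/k-\omega(D)}{1-\omega(D)}.
\]
When $\omega(D)$ is small this is roughly $n/k$. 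We must check that it lies in the range $s^{*}\le (n-d-3)/(5-\delta)$ that Theorem~\ref{FStableK4} requires. Here $n/k=(n-d)/4\cdot 4/k$ is about $(n-d)/5$ when $k=5$, and $(n-2)/6<(n-2)/5$ when $k=6$, which gives slack. The boundary case $k=5$, $d=1$ is tight, so there we will use the deficiency $\tau^{*}<n/5$ together with the $\delta$-slack, possibly adjusting $\omega$.

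\textbf{Step 2: stability.} The link satisfies $e(L)\ge \delta_d(H)\ge \binom{n-d}{4}-\binom{n-d-n/k}{4}-\xi n^4$. If $\nu^{*}(L)$ is smaller than $n/k$, the edge count exceeds the corresponding extremal bound even more. To apply Theorem~\ref{FStableK4} we need stability, so we shift: applying $(i,j)$-shifts does not increase $\nu^{*}$ and preserves the edge count. Alternatively, we may note that the proof of Theorem~\ref{FStableK4} uses stability only to order the weights, which we can arrange by sorting $\omega$. Theorem~\ref{FStableK4} then says $L$ is $100\xi^{1/4}$-close to $H_1$ with cover set $U_D$ of size about $n/k$. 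Moreover $\omega$ must be nearly $0$ outside a set $U$ of size about $n/k$, and the complement $W=V\setminus U$ spans few edges in every link.

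\textbf{Step 3: contradiction with $\alpha(H)$.} The set $W$ has size about $(1-1/k)n$. Summing over $D\subseteq W$, $H[W]$ has $o(n^k)$ edges. Deleting one vertex from each such edge (a greedy or removal-lemma argument), or more simply using the weights, we aim to show that the set $\{v:\omega(v)<1/(2k)\}$, say, is independent in $H$ and has size at least $(1-1/k-\zeta/5)n$. This contradicts $\alpha(H)\le(1-1/k-\zeta/5)n$. Concretely, any edge inside a low-weight set has total weight less than $1$, so such a set is independent. It therefore suffices to show that many vertices have small weight. This follows from $\sum\omega<n/k$ combined with the closeness structure: most vertices of $U$ have weight about $1$, since a full-degree cover structure forces it, and $U$ is about $n/k$ in size, so the remaining weight is tiny.

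\textbf{Main obstacle.} The hardest step is quantitatively controlling $\nu^{*}(L)$ in Step 1 for $(k,d)=(5,1)$, where the range condition of Theorem~\ref{FStableK4} is nearly tight, and then converting the closeness conclusion into a weight distribution that yields a large low-weight independent set. I would first try choosing $D$ among the lowest-weight vertices and using $\delta$ to absorb the error.
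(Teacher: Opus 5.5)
Your skeleton is the natural one: LP duality, then the link of a light $d$-set, then a $4$-uniform stability theorem, then a large low-weight independent set. Using Theorem~\ref{FStableK4} where the paper (via Lemma~4.3 of \cite{LYY21}) uses the integral Theorem~\ref{StableK4} is a legitimate variation. But there are genuine gaps in Steps~1 and~3.

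\textbf{Step 1.} The step you call the main obstacle is not one: for $(5,1)$ and $s=n/5$ one has $s\le (n-4)/(5-\delta)$ as soon as $n\ge 20/\delta$. The real difficulty is left open.

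Choosing $D$ as the $d$ lightest vertices only gives $\omega(D)<d/k$. If $\omega(D)$ is of constant order, then $(n/k-\omega(D))/(1-\omega(D))$ exceeds $n/k$ by a linear amount. For $(6,2)$ with second-lightest weight near $1/6$ it is about $(n-1)/5$, and no stability theorem applies.

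For $d=1$ you need complementary slackness (Proposition~\ref{CS}). A minimum cover is supported on at most $k\tau^*(H)<n$ vertices, so the lightest vertex has weight $0$ and $\nu^*(L)\le\tau^*(H)<n/5$.

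For $(6,2)$ this gives only one zero-weight vertex, and ``possibly adjusting $\omega$'' is exactly the missing idea. One fix: let $x$ be the second-smallest weight, so $\omega(D)=x$ and $\omega\ge x$ off $D$. Then $\omega'=(\omega-x)/(1-5x)$ on $V\setminus D$ is a fractional cover of $L$ of total less than $n/6-(n/6-1)x$.
\begin{itemize}
\item Theorem~\ref{FStableK4} then applies with $s=n/6$.
\item Comparing with the lower bound on $e(L)$ via Theorem~\ref{EMCK4} gives $x=O(\xi)$. You need this at the end to turn small $\omega'$ into $\omega<1/6$.
\end{itemize}

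Also, Theorem~\ref{FStableK4} needs a stable graph. Closeness of a shifted link says nothing about $L$ or $\omega$ unless you permute the weights along with the shifts. The clean device is the one used at the start of the proof of Theorem~\ref{FStableK4}: the threshold graph $T=\{f\subseteq V\setminus D:\omega'(f)\ge1\}\supseteq L$. It is stable in the $\omega'$-order and covered by $\omega'$, so $U$ becomes the set of the $n/k$ heaviest vertices.

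\textbf{Step 3.} Your first route fails, for three reasons.
\begin{itemize}
\item Closeness to $H_1$ is asymmetric: it bounds $|E(H_1)\setminus E(L)|$, not edges of $L$ inside $W$.
\item It is known for a single link only, and $W$ depends on $D$.
\item Even $o(n^k)$ edges in $H[W]$ would not leave an independent set of size $(1-1/k-o(1))n$ after deletions.
\end{itemize}

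The weight route is right, but ``most vertices of $U$ have weight about $1$'' is asserted, not proved, and it is the heart of the matter. One argument: suppose $T$ is $\epsilon$-close to $H_1(U,Z)$. Let $U'\subseteq U$ be the vertices missing at most $\sqrt{\epsilon}n^3$ of the sets $\{u\}\cup g$ with $g\in\binom{Z}{3}$; then $|U\setminus U'|<\sqrt{\epsilon}n$. Spread weight uniformly over these edges of $T$. This gives a fractional matching of size $|U'|$ that loads each $u\in U'$ by $1$ and each $z\in Z$ by at most $\rho=(1+O(\sqrt{\epsilon}))\,3|U'|/|Z|\approx 3/(k-1)<1$.

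Testing $\omega'$ against it gives $\omega'(U')+\rho\,\omega'(Z)\ge|U'|$. Combined with $\omega'(U')+\omega'(Z)<n/k$, this yields $(1-\rho)\,\omega'(Z)<n/k-|U'|<\sqrt{\epsilon}n$. Only then is $\{v\in Z:\omega(v)<1/k\}$, which is independent in $H$, of size $(1-1/k-O(\sqrt{\epsilon}))n>(1-1/k-\zeta/5)n$.
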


The proof of Lemma~\ref{PFM2} follows verbatim the proof of Lemma~4.3 in \cite{LYY21}, with Lemma~4.2 in \cite{LYY21} replaced by Theorem~\ref{StableK4}. We therefore omit the details.
Finally, we need the following lemma.

\begin{lem}\label{final}
	Let $n,k,d$ be integers with $(k,d)\in\{(5,1),(6,2)\}$. Let $\zeta\in(0,10^{-5})$ be a real number. Suppose that $G$ is an $n$-vertex $k$-graph with $\delta_{d}(G)\geq\binom{n-d}{k-d}-\binom{n-d-n/k+1}{k-d}+1$. If $G$ is not $2\zeta$-close to $H^{k-d}(U,W)$ for any partition $V(G)=U\cup W$ with $|U|=n/k$, then for sufficiently large $n$, $G$ contains a matching covering all but at most $k-1$ vertices in $G$.
\end{lem}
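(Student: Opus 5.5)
We follow the standard absorbing scheme, as in the proof of Theorem~\ref{StableK4} but with degree conditions in place of edge counts. The constants are ordered as $0<\gamma\ll\xi\ll\zeta$.

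\emph{Step 1: absorbing matching.} For $(k,d)\in\{(5,1),(6,2)\}$ we have $1-(1-1/k)^{k-d}>1/2$: for $(5,1)$ this is $1-(4/5)^4\approx0.59$, and for $(6,2)$ it is $1-(5/6)^4\approx0.52$. Hence the hypothesis gives $\delta_d(G)\ge(1/2+2\gamma)\binom{n-d}{k-d}$ for small $\gamma$ and large $n$. Lemma~\ref{Abs} then supplies an absorbing matching $M_{abs}$ with $|M_{abs}|<\gamma^k n/k$.

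\emph{Step 2: the leftover graph.} Let $H=G\setminus V(M_{abs})$ and $n'=|V(H)|$. Deleting at most $\gamma^k n$ vertices reduces every $d$-degree by $O(\gamma^k n^{k-d})$, so
\[
\delta_d(H)>\binom{n'-d}{k-d}-\binom{n'-d-n'/k}{k-d}-\xi n'^{k-d}.
\]
(If $n'\notin k\mathbb{Z}$, we first discard fewer than $k$ further vertices; these are absorbed at the end.) Since $G$ is not $2\zeta$-close to any $H^{k-d}(U,W)$ with $|U|=n/k$, and only $O(\gamma^k n^k)$ edges meet $V(M_{abs})$, the graph $H$ is not $\zeta$-close to any $H^{k-d}(U',W')$ with $|U'|=n'/k$. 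Lemma~\ref{Ind} therefore shows that $H$ is $(\mathcal F,\zeta/(2k!))$-dense, where $\mathcal F$ consists of the sets of size at least $(1-1/k-\zeta/4)n'$.

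\emph{Step 3: random partition into pieces with perfect fractional matchings.} We take $n'^{1.1}$ random sets $R^i$ as in Lemma~\ref{random3}, so that properties (1)--(5) hold with high probability. In particular, $\delta_d(H[R^i])$ exceeds the extremal bound minus $2\xi|R^i|^{k-d}$. By Lemma~\ref{IndRan} with $\rho=1-1/k-\zeta/4$ and $\theta\ll\zeta$, combined with a union bound over the $n'^{1.1}$ copies, every $R^i$ satisfies $\alpha(H[R^i])\le(1-1/k-\zeta/5)|R^i|$. Lemma~\ref{PFM2} now gives a perfect fractional matching in each $H[R^i]$. Lemma~\ref{random2} produces a spanning subgraph $H''$ that is almost regular with $\Delta_2(H'')\le n^{0.1}=o(n^{0.2})$. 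Applying Lemma~\ref{NPM} with $\sigma=\gamma^{2k}/2$ yields a matching $M'$ in $H$ covering all but at most $\sigma n'$ vertices.

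\emph{Step 4: absorption.} Let $S$ be the set of vertices left uncovered by $M'$, trimmed to a size divisible by $k$ by setting aside fewer than $k$ vertices. Then $|S|\le\gamma^{2k}n$, so Lemma~\ref{Abs} gives a matching covering exactly $V(M_{abs})\cup S$. Together with $M'$, this is a matching of $G$ missing at most $k-1$ vertices.

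\emph{Main obstacle.} The step requiring the most care is the parameter bookkeeping in Step~2: the loss from deleting $V(M_{abs})$ must stay below the $\xi$-slack in Lemma~\ref{PFM2}, and non-closeness must survive the deletion, which needs the constant $2\zeta$ to degrade only to $\zeta$. This dictates the hierarchy $\gamma\ll\xi\ll\zeta$. The real depth is contained in Lemma~\ref{PFM2}, which we take as given.
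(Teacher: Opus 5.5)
Your proposal is correct and follows the paper's proof essentially step for step: an absorbing matching from Lemma~\ref{Abs}, transfer of the $d$-degree bound and of non-closeness to $H=G\setminus V(M_{abs})$, random sets $R^i$ controlled by Lemmas~\ref{Ind}, \ref{IndRan} and \ref{random3}, perfect fractional matchings in each $H[R^i]$ from Lemma~\ref{PFM2}, then Lemmas~\ref{random2} and \ref{NPM}, and finally absorption of the leftover set. Your hierarchy $\gamma\ll\xi\ll\zeta$ replaces the paper's explicit choice $\gamma^k(1-\gamma^k)^{-4}<(\zeta/200)^4$, and your bound $\alpha(H[R^i])\le(1-1/k-\zeta/5)|R^i|$ is the correctly normalized version of the bound the paper writes with $n'$ in place of $|R^i|$.
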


\begin{proof}[Proof of Lemma \ref{final}]
	First, note that for $(k,d)\in\{(5,1),(6,2)\}$, we always have $1-(1-1/k)^{k-d}>1/2$. So by Lemma \ref{Abs}, we get an absorbing matching $M_{abs}$ with $|V(M)|\leq \gamma^{k}n$, where $\gamma>0$ is chosen so that $\gamma^{k}(1-\gamma^{k})^{-4}<(\zeta/200)^{4}$. Let $H=G\setminus V(M_{abs})$. Then we have $\delta_{d}(H)\geq \binom{n'-d}{k-d}-\binom{n'-d-n'/k}{k-d}-\varepsilon(n')^{k-d}$, where $n'=|V(H)|$ and $\varepsilon=\gamma^{k}(1-\gamma^{k})^{-4}/3$. Furthermore, we also have that $H$ is not $\zeta$-close to $H^{k-d}(U',W')$ for any partition $V(H)=U'\cup W'$ with $|U'|=n'/k$.

	Now by Lemmas \ref{Ind}, \ref{IndRan} and \ref{random3}, we have random subsets $R^{i}$, $1\leq i\leq (n')^{1.1}$, which satisfy \textsl{(1)-(5)} in Lemma \ref{random3}. Moreover, for each $R^{i}$, $\alpha(H[R^{i}])\leq (1-1/k-\zeta/5)n'$. Then by Lemma \ref{PFM2}, $H[R^{i}]$ has a perfect fractional matching for all $1\leq i\leq (n')^{1.1}$.

	By Lemma \ref{random2}, we have a spanning subgraph $H''$ of $H$, such that $d_{H''}(v)=(1+o(1))(n')^{0.2}$ for each $v\in V(H)$, and $\Delta_{2}(H'')\leq (n')^{0.1}$. Finally, applying Lemma \ref{NPM}, we obtain a matching $M_{1}$, which covers all but at most $\gamma^{2k} n'$ vertices in $H$. Let $S$ denote the set of vertices in $H$ not covered by $M_{1}$. Then $|S|\leq \gamma^{2k} n'<\gamma^{2k} n$. Let $S'$ be a maximum subset of $S$ such that $|S'|\in k\mathbb{Z}$. Then $|S\setminus S'|\leq k-1$. By the property of $M_{abs}$, there exists a matching $M_{2}$ in $G$, which covers exactly the vertices in $V(M_{abs})\cup S'$. Therefore, $M_{1}\cup M_{2}$ is a matching in $G$, which covers all but at most $k-1$ vertices in $G$.
\end{proof}

 This lemma completes the proof of Theorem~\ref{VDPC}. \qedB

\begin{appendices}
\section{Appendix}\label{sec:Appendix}

\noindent In this section, we provide the proofs of the technical lemmas \ref{convex}, \ref{Prebound}, \ref{Maxvalue}, and \ref{Calculate}. In particular, Lemma \ref{Calculate} is verified using \textsl{Wolfram Mathematica}.

For a differentiable function $f(x)$ and an integer $i\geq 1$, we denote its $i^\text{th}$ derivative by $f^{(i)}(x)$. 

\subsection{Proof of Lemma \ref{convex}}    
\begin{proof}
	By the definition of $f(x)$, it suffices to prove that both $f_{1}(x)=\binom{m}{k-1}-\binom{m-\frac{(1-a)s}{1-x}}{k-1}$ and $f_{2}(x)=\binom{\frac{(k-1)(1-a)s}{1-x}+k-2}{k-1}$ are convex on $[0,(1+a)/2]$.

First consider $f_{1}(x)$. Let $g_{1}(x)=\ln\binom{m-\frac{(1-a)s}{1-x}}{k-1}$. Then we have
\begin{align*}
	g^{(1)}_{1}(x)&=-(1-a)s(1-x)^{-2}\left(\sum_{i=0}^{k-2}\frac{1}{m-i-\frac{1-a}{1-x}s}\right), \mbox{ ~~~ and }\\
	g^{(2)}_{1}(x)&=-2(1-a)s(1-x)^{-3}\left(\sum_{i=0}^{k-2}\frac{1}{m-i-\frac{1-a}{1-x}s}\right)-(1-a)^{2}s^{2}(1-x)^{-4}\left(\sum_{i=0}^{k-2}\frac{1}{(m-i-\frac{1-a}{1-x}s)^{2}}\right).
\end{align*}
	Note that $f_{1}(x)=\binom{m}{k-1}-e^{g_{1}(x)}$, we have $f^{(2)}_{1}(x)=-e^{g_{1}(x)}((g_{1}^{(1)}(x))^{2}+g^{(2)}_{1}(x))$.
	Therefore, we have
\begin{displaymath}
	f^{(2)}_{1}(x)=-\frac{(1-a)s}{(1-x)^{3}}\binom{m-\frac{1-a}{1-x}s}{k-1}\sum_{i=0}^{k-2}\left\{\left(m-i-\frac{1-a}{1-x}s\right)^{-1}
	\left(\sum_{0\leq j\leq k-2, j\neq i}\frac{m-j}{m-j-\frac{1-a}{1-x}s}-k\right)\right\}.
\end{displaymath}
	For $0\leq j\leq k-2$, Let 
\begin{displaymath}
	h_{j}(x)=\frac{m-j}{m-j-\frac{1-a}{1-x}s}-\frac{k}{k-2}.
\end{displaymath}
	Then we finally have
\begin{displaymath}
	f^{(2)}_{1}(x)=-\frac{(1-a)s}{(1-x)^{3}}\binom{m-\frac{1-a}{1-x}s}{k-1}\sum_{i=0}^{k-2}\left\{\left(m-i-\frac{1-a}{1-x}s\right)^{-1}
	\left(\sum_{0\leq j\leq k-2, j\neq i}h_{j}(x)\right)\right\}.
\end{displaymath}
	For $x\in[0,(1+a)/2]$, we have $\frac{1-a}{1-x}\leq 2$. Since $m\geq ks+k-2$, we have
\begin{displaymath}
	h_{j}(x)\leq\frac{m-(k-2)}{m-(k-2)-\frac{1-a}{1-x}s}-\frac{k}{k-2}\leq\frac{ks}{ks-2s}-\frac{k}{k-2}=0.
\end{displaymath}
	Therefore, $f^{(2)}_{1}(x)\geq 0$ for all $x\in[0,(1+a)/2]$. That is, $f_{1}(x)$ is convex on $[0,(1+a)/2]$.

	Now we consider $f_{2}(x)$. Let $g_{2}(x)=\ln f_{2}(x)$. Then we have
\begin{align*}
	g^{(1)}_{2}(x)&=(k-1)(1-a)s(1-x)^{-1}\sum_{i=0}^{k-2}\frac{1}{(k-1)(1-a)s+(k-2-i)(1-x)},\\
	g^{(2)}_{2}(x)&=(k-1)(1-a)s(1-x)^{-2}\sum_{i=0}^{k-2}\frac{(k-1)(1-a)s+2(k-2-i)(1-x)}{[(k-1)(1-a)s+(k-2-i)(1-x)]^{2}},
\end{align*}
	where $k-1\geq 2$ and $1-a>0$. So when $1-x>0$, we have $g^{(1)}_{2}(x)>0$ and $g^{(2)}_{2}(x)>0$.
	Since $f_{2}(x)=e^{g_{2}(x)}$, it is easy to see that
\begin{displaymath}
	f^{(2)}_{2}(x)=e^{g_{2}(x)}\left((g^{(1)}_{2}(x))^{2}+g^{(2)}_{2}(x)\right)>0,
\end{displaymath}
	and thus $f_{2}(x)$ is convex on $[0,1)$. 
    This completes the proof that $f(x)$ is convex on $[0,(1+a)/2]$.		
\end{proof}

\subsection{Proof of Lemma \ref{Prebound}}
\begin{proof}
	We first prove the case $\omega(1)<1/2$.
	Note that $\frac{1}{2}>\omega(1)\geq\omega(2)\geq\dots\geq\omega(m)$ and that $\omega(3s)=\omega(3s+1)=\dots=\omega(m)=0$. For all edges $e\in E(G)$, $|e\cap[3s-1]|\geq 3$.	Since $\nu(G[[3s-1]])\leq \nu(G)\leq \nu^{*}(G)\leq \sum_{i=1}^{m}\omega(i)\leq\mu s$, we have $e(G)=e(G[[3s-1]])\leq \max\{\binom{3s-1}{3}-\binom{3s-1-\mu s}{3},\binom{3\mu s+2}{3}\}$.

	For the case $\omega(1)\leq 3/5$. Similarly, for all edges $e\in E(G)$, $|e\cap[3s-1]|\geq 2$. Let $E_{1}=\{e\in E(G) \mid |e\cap[3s-1]|=2\}$. Then $e(G)=e(G[[3s-1]])+|E_{1}|$. Since we already have $e(G[[3s-1]])\leq \max\{\binom{3s-1}{3}-\binom{3s-1-\mu s}{3},\binom{3\mu s+2}{3}\}$, we only need to prove that $|E_{1}|\leq \binom{2\mu s}{2}\binom{m-3s+1}{1}$.

	Let $P=\{\{x,y\}\subseteq[3s-1] \mid \omega(x)+\omega(y)\geq 1\}$. It suffices to show that $|P|\leq \binom{2\mu s}{2}$.
    Let $\ell=\max_{\omega(i)\geq 2/5}\{i\}$. Then $\ell\leq 5\mu s/2$. Since $\omega(1)\leq 3/5$, $P\subseteq \binom{[\ell]}{2}$. Let $H$ be a graph on vertex set $[\ell]$ with edge set $P$. The restriction $\omega|_{[\ell]}$ is a fractional vertex cover of $H$, having size at most $\mu s$. Thus $\nu(H)\leq \mu s$. Then by Erd\H{o}s and Gallai's result \cite{EG59} (i.e., the EMC for $k=2$), we have $|P|\leq \binom{2\mu s}{2}$. 
	This proves the lemma, i.e., $e(G)\leq\max\{\binom{3s-1}{3}-\binom{3s-1-\mu s}{3},\binom{3\mu s+2}{3}\}+\binom{2\mu s}{2}\binom{m-3s+1}{1}$ for $\omega(1)\leq 3/5$.
\end{proof}

\subsection{Proof of Lemma \ref{Maxvalue}}
\begin{proof}
	It suffices to prove that for sufficiently large $m$, $g(s)$ is convex on $[0,(m-2)/(4-\delta)]$.
	Let
\begin{displaymath}
\begin{array}{l}
	g_{1}(s)=(1-a)s\left(\binom{m}{3}-\binom{m-\mu s}{3}\right)-\frac{(1-\delta)(1-a)s}{\beta}\binom{m-\mu s}{3}\\[0.3em]
	g_{2}(s)=(1-a)s\left(\binom{3s-1}{3}-\binom{3s-1-\mu s}{3}+(m-3s+1)\binom{2\mu s}{2}\right)-\frac{(1-\delta)(1-a)s}{\beta}\binom{m-\mu s}{3}\\[0.3em]
	g_{3}(s)=(1-a)s\left(\binom{3\mu s+2}{3}+(m-3s+1)\binom{2\mu s}{2}\right)-\frac{(1-\delta)(1-a)s}{\beta}\binom{m-\mu s}{3}\\[0.3em]
	g_{4}(s)=(1-a)s\left(\binom{3s-1}{3}-\binom{3s-1-\mu s}{3}\right)-\frac{(1-\delta)(1-a)s}{\beta}\binom{m-\mu s}{3}\\[0.3em]
	g_{5}(s)=(1-a)s\binom{3\mu s+2}{3}-\frac{(1-\delta)(1-a)s}{\beta}\binom{m-\mu s}{3}
\end{array}
\end{displaymath}
	Then it suffices to prove that, for sufficiently large $m$, each function $g_{i}(s)$ (for $1\leq i\leq 5$) is convex on the interval $[0,(m-2)/(4-\delta)]$. Let $\alpha=s/m$, then $0\leq \alpha \leq 1/(4-\delta)$. For each $1\leq i\leq 5$, let $C_{i}(\alpha)$ denote the coefficient of $m^{2}$ in $\frac{3\beta}{(1-a)\mu}g_{i}^{(2)}(s)$. Then it suffices to prove that $C_{i}(\alpha)>0$ for $0\leq\alpha\leq1/(4-\delta)$ and all $1\leq i\leq 5$. After computation, we have the following:
\begin{displaymath}
\begin{array}{ll}
	C_{1}(\alpha)= & (\beta+1-\delta)[6\mu^{2}\alpha^{2}-9\mu\alpha+3]\\[0.25em]
	C_{2}(\alpha)= & 6(27\beta-45\beta\mu+(\beta+1-\delta)\mu^{2})\alpha^{2}+9(4\beta-1+\delta)\mu\alpha+3(1-\delta)\\[0.25em]
	C_{3}(\alpha)= & 6(-36\beta\mu+(27\beta+1-\delta)\mu^{2})\alpha^{2}+9(4\beta-1+\delta)\mu\alpha+3(1-\delta)\\[0.25em]
	C_{4}(\alpha)= & 6(27\beta-9\beta\mu+(\beta+1-\delta)\mu^{2})\alpha^{2}-9(1-\delta)\mu\alpha+3(1-\delta)=\frac{1-\delta}{\beta+1-\delta}C_{1}(\alpha)+6\beta(28-9\mu)\alpha^{2}\\[0.25em]
	C_{5}(\alpha)= & 6(27\beta+1-\delta)\mu^{2}\alpha^{2}-9(1-\delta)\mu\alpha+3(1-\delta)=\frac{1-\delta}{\beta+1-\delta}C_{1}(\alpha)+162\beta\alpha^{2}
\end{array}
\end{displaymath}	

    Recall that $\delta=10^{-10}$, $0<5\varepsilon^{1/4}<1-a\leq \mu\leq 1$ and $\beta=(4-\delta)(1-b)-3(1-a)>0$.
    First, consider $C_{1}(\alpha)$. We have $C_{1}(\alpha)\geq C_{1}(1/(4-\delta))>C_{1}(1/3)=(\beta+1-\delta)(2\mu^{2}/3-3\mu+3)\geq2(\beta+1-\delta)/3>0$, as desired. 
    Next, by considering $C_{4}(\alpha)$ and $C_{5}(\alpha)$, we have $C_{4}(\alpha)>\frac{1-\delta}{\beta+1-\delta}C_{1}(\alpha)>0$ and $C_{5}(\alpha)>\frac{1-\delta}{\beta+1-\delta}C_{1}(\alpha)>0$.
	Finally, consider $C_{2}(\alpha)$ and $C_{3}(\alpha)$. In these cases, parameter $b\in[1/3,3/8)$. So we have $\beta>5(1-\delta)/8$, which implies $9(4\beta-1+\delta)\mu>0$. Thus, we have $C_{2}(\alpha)\geq\min\{C_{2}(0),C_{2}(1/(4-\delta))\}>0$ and $C_{3}(\alpha)\geq\min\{C_{3}(0),C_{3}(1/(4-\delta))\}>0$.
	Consequently, we have shown that $g(s)$ is convex on $[0,(m-2)/(4-\delta)]$.
\end{proof}

\subsection{Proof of Lemma \ref{Calculate}}
\begin{proof}
	Let $x=1-b$ and $y=1-a$. Then we have $5\varepsilon^{1/4}<y\leq x\leq 3/4$, $\mu=y/x\in(0,1]$ and $\beta=(4-\delta)x-3y\in(5(1-\delta)\varepsilon^{1/4},4)$. Set $z=\varepsilon^{1/4}<10^{-5}$.
	The original inequalities can then be rewritten in the following polynomial form. 
\begin{displaymath}
	(1-\delta)\left((4-\delta)x-y\right)^{3}-\left((4-\delta)x-3y\right)h(x,y)>\left((4-\delta)x-3y\right)p(x,z),
\end{displaymath}
where 
\begin{displaymath}
h(x,y)=\left\{\begin{array}{ll}
    (4-\delta)^{3}x^{3}-\left((4-\delta)x-y\right)^{3}, & 0< x\leq 5/8\\[0.25em]
    \max\{27yx^{2}-9y^{2}x+y^{3},27y^{3}\}+3(1-\delta)(4-\delta)y^{2}x, & 5/8<x\leq 2/3\\[0.25em]
    \max\{27yx^{2}-9y^{2}x+y^{3},27y^{3}\}, & 2/3<x\leq 3/4 \textrm{ },
\end{array}\right. 
\end{displaymath}   
and
\begin{displaymath}
p(x,z)=\left\{\begin{array}{ll}
    52(4-\delta)^{3}x^{3}z^{3}, &0< x\leq 5/8\\[0.3em]
    \frac{(4-\delta)^{3}x^{3}}{8100}, & 5/8<x\leq 3/4 \textrm{ }.
\end{array}\right.
\end{displaymath} 

    These polynomial inequalities of variable $x$, $y$ and $z$ have been verified through \textsl{WolframAlpha}(https://www.wolframalpha.com/). The corresponding program is available at \cite{PRO}.
\end{proof}

\end{appendices}


{
\small
\begin{thebibliography}{99}

\bibitem{AFHRRS12} N. Alon, P. Frankl, H. Huang, V. R\"{o}dl, A. Ruci\'{n}skid and B. Sudakov,  \newblock{Large matchings in uniform hypergraphs and the conjectures of Erd\H{o}s and Samuels}, \newblock{\emph{Journal of Combinatorial Theory, Series A}}, \textbf{119} (2012), 1200--1215.
\vspace{-0.6em}
\bibitem{BDE76} B. Bollob\'{a}s, D.E. Daykin and P. Erd\H{o}s, \newblock{Sets of independent edges of a hypergraph}, \newblock{\emph{The Quarterly Journal of Mathematics}}, \textbf{27(1)} (1976), 25--32.
\vspace{-0.6em}
\bibitem{CGHW22} Y. Chang, H. Ge, J. Han and G. Wang, \newblock{Matching of given sizes in hypergraphs}, \newblock{\emph{SIAM Journal on Discrete Mathematics}}, \textbf{36(3)} (2022), 2323--2338.
\vspace{-0.6em}
\bibitem{DH81} D.E. Daykin and R. H\"{a}ggkvist, \newblock{Degrees giving independent edges in a hypergraph}, \newblock{\emph{Bulletin of the Australian Mathematical Society}}, \textbf{23(1)} (1981), 103--109.
\vspace{-0.6em}
\bibitem{E65} P. Erd\H{o}s. \newblock{A problem on independent r-tuples}, \newblock{\emph{Ann. Univ. Sci. Budapest. E\"{o}tv\"{o}s Sect. Math}}, \textbf{8} (1965), 93--95.
\vspace{-0.6em}
\bibitem{EG59} P. Erd\H{o}s and T. Gallai, \newblock{On maximal paths and circuits of graphs}, \newblock{\emph{Acta Math. Acad. Sci. Hungar}}, \textbf{10} (1959), 337--356.
\vspace{-0.6em}
\bibitem{EKR61} P. Erd\H{o}s, C. Ko and R. Rado, \newblock{Intersection theorems for systems of finite sets}, \newblock{\emph{Quart. J. Math. Oxford}}, \textbf{12(2)} (1961), 313--320.
\vspace{-0.6em}
\bibitem{F87} P. Frankl, \newblock{The shifting technique in extremal set theory}, \newblock{\emph{Surveys in Combinatorics}}, \textbf{123} (1987), 81--110.
\vspace{-0.6em}
\bibitem{F13} P. Frankl, \newblock{Improved bounds for Erd\H{o}s' matching conjecture}, \newblock{\emph{Journal of Combinatorial Theory, Series A}}, \textbf{120(5)} (2013), 1068--1072.
\vspace{-0.6em}
\bibitem{F17A} P. Frankl, \newblock{Proof of the Erd\H{o}s matching conjecture in a new range}, \newblock{\emph{Israel Journal of Mathematics}}, \textbf{222(1)} (2017), 421--430.
\vspace{-0.6em}
\bibitem{F17B} P. Frankl, \newblock{On the maximum number of edges in a hypergraph with given matching number}, \newblock{\emph{Discrete Applied Mathematics}}, \textbf{216} (2017), 562--581.
\vspace{-0.6em}
\bibitem{FK22} P. Frankl and A. Kupavskii, \newblock{The Erd\H{o}s matching conjecture and concentration inequalities}, \newblock{\emph{Journal of Combinatorial Theory, Series B}}, \textbf{157} (2022), 366--400.
\vspace{-0.6em}
\bibitem{FR85} P. Frankl and V. R\"{o}dl, \newblock{Near perfect coverings in graphs and hypergraphs}, \newblock{\emph{European Journal of Combinatorics}}, \textbf{6(4)} (1985), 317--326.
\vspace{-0.6em}	
\bibitem{FRR12} P. Frankl, V. R\"{o}dl and A. Ruci\'{n}ski. \newblock{On the maximum number of edges in a triple system not containing a disjoint family of a given size}, \newblock{\emph{Combinatorics, Probability and Computing}}, \textbf{21} (2012), 141--148.
\vspace{-0.6em}
\bibitem{GLJ23} M. Guo, H. Lu and Y. Jiang, \newblock{Improved bound on vertex degree version of Erd\H{o}s matching conjecture}, \newblock{\emph{Journal of Graph Theory}}, \textbf{104(3)} (2023), 485--498.
\vspace{-0.6em}
\bibitem{H16A} J. Han, \newblock{Near perfect matchings in k-uniform hypergraphs II}, \newblock{\emph{SIAM Journal on Discrete Mathematics}}, \textbf{30(3)} (2016), 1453--1469.
\vspace{-0.6em}
\bibitem{H16B} J. Han, \newblock{Perfect matchings in hypergraphs and the Erd\"{o}s matching conjecture}, \newblock{\emph{SIAM Journal on Discrete Mathematics}}, \textbf{30(3)} (2016), 1351--1357.
\vspace{-0.6em}
\bibitem{HLS12} H. Hao, P.-S. Loh and B. Sudakov, \newblock{The size of a hypergraph and its matching number}, \newblock{\emph{Combinatorics, Probability and Computing}}, \textbf{21(3)} (2012), 442--450.
\vspace{-0.6em}
\bibitem{HPS09} H. H\`{a}n, Y. Person and M. Schacht, \newblock{On perfect matchings in uniform hypergraphs with large minimum vertex degree}, \newblock{\emph{SIAM Journal on Discrete Mathematics}}, \textbf{23(2)} (2009), 732--748.
\vspace{-0.6em}
\bibitem{HZ17} H. Huang and Y. Zhao, \newblock{Degree versions of the Erd\H{o}s-Ko-Rado theorem and Erd\H{o}s hypergraph matching conjecture}, \newblock{\emph{Journal of Combinatorial Theory, Series A}}, \textbf{150} (2017), 233--247.
\vspace{-0.6em}
\bibitem{K13}  I. Khan, \newblock{Perfect matchings in 3-uniform hypergraphs with large vertex degree}, \newblock{\emph{SIAM Journal on Discrete Mathematics}}, \textbf{27(2)} (2013), 1021--1039.
\vspace{-0.6em}
\bibitem{K16} I. Khan, \newblock{Perfect matchings in 4-uniform hypergraphs}, \newblock{\emph{Journal of Combinatorial Theory, Series B}}, \textbf{116} (2016), 333--366.
\vspace{-0.6em}
\bibitem{KO06} D. K\"{u}hn and D. Osthus, \newblock{Matchings in hypergraphs of large minimum degree}, \newblock{\emph{Journal of Graph Theory}}, \textbf{51(4)} (2006), 269--280.
\vspace{-0.6em}
\bibitem{KO09} D. K\"{u}hn and D. Osthus, \newblock{Embedding large subgraphs into dense graphs, in: S. Huczynka, J. Mitchell, C. Roney
Dougal (Eds.)}, \newblock{\emph{Surveys in Combinatorics, London Math. Soc. Lecture Note Ser}}, \textbf{365}, \newblock{Cambridge University
 Press}, 2009, 137--167.
\vspace{-0.6em}
\bibitem{KOT13} D. K\"{u}hn, D. Osthus and A. Treglown, \newblock{Matchings in 3-uniform hypergraphs}, \newblock{\emph{Journal of Combinatorial Theory, Series B}}, \textbf{103(2)} (2013), 291--305.
\vspace{-0.6em}
\bibitem{KOT14} D. K\"{u}hn, D. Osthus and T. Townsend, \newblock{Fractional and integer matchings in uniform hypergraphs}, \newblock{\emph{European Journal of Combinatorics}}, \textbf{38} (2014), 83--96.
\vspace{-0.6em}
\bibitem{LM14} T. \L uczak and K. Mieczkowska, \newblock{On Erd\H{o}s' extremal problem on matchings in hypergraphs}, \newblock{\emph{Journal of Combinatorial Theory, Series A}}, \textbf{124} (2014), 178--194.
\vspace{-0.6em}
\bibitem{LY22} H. Lu and X. Yu,  A note on exact minimum degree threshold for fractional perfect matchings, \emph{Graphs and Combinatorics}, \textbf{38(3)} (2022), 80.
\vspace{-0.6em}
\bibitem{LYY21} H. Lu, X. Yu and X. Yuan, \newblock{Nearly perfect matchings in uniform hypergraphs}, \newblock{\emph{SIAM Journal on Discrete Mathematics}}, \textbf{35(2)} (2021), 1022--1049.
\vspace{-0.6em}
\bibitem{MR11} K. Markstr\"{o}m and A. Ruci\'{n}ski, \newblock{Perfect matchings (and Hamilton cycles) in hypergraphs with large degrees}, \newblock{\emph{European Journal of Combinatorics}}, \textbf{32(5)} (2011), 677--687.
\vspace{-0.6em}
\bibitem{P08} O. Pikhurko, \newblock{Perfect Matchings and $K_{4}^{3}$-Tilings in Hypergraphs of Large Codegree}, \newblock{\emph{Graphs and Combinatorics}}, \textbf{24(4)} (2008), 391--404.
\vspace{-0.6em}
\bibitem{RRS06} V. R\"{o}dl, A. Ruci\'{n}ski and E. Szemer\'{e}di, \newblock{Perfect matchings in uniform hypergraphs with large minimum degree}, \newblock{\emph{European Journal of Combinatorics}}, \textbf{27(8)} (2006), 1333--1349.
\vspace{-0.6em}
\bibitem{RRS09} V. R\"{o}dl, A. Ruci\'{n}ski and E. Szemer\'{e}di, \newblock{Perfect matchings in large uniform hypergraphs with large minimum collective degree}, \newblock{\emph{Journal of Combinatorial Theory, Series A}}, \textbf{116(3)} (2009), 613--636.
\vspace{-0.6em}
\bibitem{TZ13} A. Treglown and Y. Zhao, \newblock{Exact minimum degree thresholds for perfect matchings in uniform hypergraphs II}, \newblock{\emph{Journal of Combinatorial Theory, Series A}}, \textbf{120} (2013), 1463--1482.
\vspace{-0.6em}
\bibitem{TZ16} A. Treglown and Y. Zhao, \newblock{A note on perfect matchings in uniform hypergraphs}, \newblock{\emph{The Electronic Journal of Combinatorics}}, \textbf{23(1)} (2016), 1--16.
\vspace{-0.6em}
\bibitem{PRO} \href{https://github.com/Yuze-Wu1997/Program-for-Published-Papers/blob/main/EMCK4.txt}{https://github.com/Yuze-Wu1997/Program-for-Published-Papers/blob/main/EMCK4.txt}

\end{thebibliography}
}
\end{document}